\newcommand{\maybeqed}{}
\numberwithin{equation}{section}
\renewcommand\setminus\smallsetminus 
\newcommand\C{{\mathbb{C}}}
\newcommand\Chat{{\hat{\mathbb{C}}}}
\newcommand\D{{\mathbb{D}}}
\newcommand\E{{\mathbf{E}}}
\newcommand\F{{\mathcal{F}}}
\newcommand\Q{{\mathbf{Q}}}
\newcommand\btilde{{\tilde{b}}}
\newcommand\cc{{\mathbf{c}}}
\newcommand\SLEk{{SLE$_\kappa$}}
\newcommand\eqd{\stackrel{\mathrm{d}}{=}}
\newcommand\sm\setminus
\newcommand\ol\overline
\newcommand\inv{^{-1}}
\newcommand\tmass\Psi
\newcommand\lm\Lambda
\newcommand\lmstar{{\lm^*}}
\newcommand\loopm{\mu^{\operatorname{loop}}}
\DeclareMathOperator{\ccap}{cap}
\DeclareMathOperator{\rad}{rad}
\theoremstyle{definition}
\newtheorem{definition}{Definition}[section]
\newtheorem{remark}[definition]{Remark}
\theoremstyle{plain}
\newtheorem{corollary}[definition]{Corollary}
\newtheorem{lemma}[definition]{Lemma}
\newtheorem{proposition}[definition]{Proposition}
\newtheorem{theorem}[definition]{Theorem}
\newcommand{\Prob} {{\bf P}}
\newcommand{\dist}{{\rm dist}}
\newcommand \hulls{{\mathcal H}}
\newcommand \p {\partial}
\newcommand \Half {{\mathbb H}}
\newcommand \Disk {{\mathbb D}}
\newcommand \domain {{\mathcal D}}
\newcommand \Outer {{\mathcal O}}
\newcommand \exc {{\mathcal E}}
\title{Reversed radial SLE\\and the Brownian loop measure}
\author{Laurence S. Field\thanks{University of Chicago.}\and
Gregory F. Lawler\thanks{University of Chicago; 
research supported by NSF grant DMS-0907143.}}
\date{February 28, 2013}
\begin{document}

\maketitle

\begin{abstract}
The Brownian loop measure is a conformally invariant
measure on loops in the plane that arises when studying the
Schramm--Loewner evolution (SLE).
When an SLE curve in a domain evolves from an interior
point, it is natural to consider the loops that hit the
curve and leave the domain, but their measure is infinite.
We show that there is a related normalized quantity that is
finite and invariant under M\"obius transformations of the
plane.
We estimate this quantity when the curve is small and the
domain simply connected.
We then use this estimate to prove a formula for the
Radon--Nikodym derivative of reversed radial SLE with
respect to whole-plane SLE.
\end{abstract}

\section{Introduction}
\label{sec:intro}

Oded Schramm introduced the Schramm--Loewner evolution
(SLE) in~\cite{Schramm} as a one-parameter 
family of random curves defined in
simply connected complex domains.
The parameter $\kappa>0$ determines the local
behaviour of the curve.
Schramm considered three types of SLE: \emph{chordal} SLE,
which connects two boundary points, \emph{radial} SLE,
which connects a boundary point to an interior point,
and \emph{whole-plane} SLE, which connects two points on
the Riemann sphere.
These random curves have two defining properties.
First, they are invariant under conformal transformations
of the domain.
Second, they satisfy the domain Markov property:
given an initial segment of the curve, the remainder of the
curve follows the law of SLE in the slit domain.
If $0 < \kappa \leq 4$, which is what we consider in this
paper, the curves are simple. 

As in \cite{LJSP,L11}, we will consider \SLEk\ in a domain $D$, 
when $ \kappa \leq 4$, as a measure $\mu_D(z,w)$ on
simple curves in $D$ connecting $z$ and $w$.  Here
$z,w$ can be interior or boundary points, but if
$z$ or $w$ is a 
 boundary point, we assume that the boundary is
locally analytic there.   The measures are conformally covariant;
that is, if $f:D \rightarrow f(D)$ is a conformal transformation,
then
\begin{equation}  \label{confconv}
   f \circ \mu_D(z,w) = |f'(z)|^{b_z} \, |f'(w)|^{b_w} \,
\mu_{f(D)}(f(z),f(w)), 
\end{equation}
where  $b_z$ and $b_w$ are the \emph{boundary scaling exponent}
\[  b := \frac {6-\kappa}{2\kappa}, \]
or 
the \emph{interior scaling exponent}
\[    \tilde b  := \frac{(\kappa - 2) \, b}{4}, \]
depending on which kind of point $z$ and $w$ are.   We
write the total mass of the measure $\mu_D(z,w)$ as
$\Psi_D(z,w)$ and call it the SLE \emph{partition function}.
See also Dub\'edat's work~\cite{Dub} for a slightly different notion of SLE partition function in the context of the Gaussian free field.

Indeed, in many examples one can obtain the SLE partition function (at least 
conjecturally) as a normalized limit of partition functions
of discrete measures.  It is known that if $\kappa \leq 8/3$,
or if $D$ is simply connected or doubly connected, then 
$\Psi_D(z,w) < \infty$.  It is conjectured that this is true
for all $D$ for $\kappa \leq 4$.    If $\Psi_D(z,w) < \infty$,
then we define $\mu^\#_D(z,w)$ to be the corresponding
probability measure obtained by normalizing.  The measure
$\mu_D^\#(z,w)$ is conformally \emph{invariant} and hence
can be defined for nonsmooth boundary points provided that
there exists a conformal transformation
$f:D \rightarrow f(D)$ such that $f(z),f(w)$ are smooth boundary
points and $\Psi_{f(D)}(f(z),f(w)) < \infty$. 

Suppose $z \in \partial D$ is a smooth boundary
point, and suppose that $ D_1$ is a subdomain of $D$ 
that agrees with it in a neighborhood of $z$.  Let us compare
$\mu:=\mu_D(z,w)$ with $ \mu_1 := \mu_{  D_1}(z,w_1)$.  Here
$w, w_1$ can be either boundary or interior points.  Let
$t$ be a stopping time for the SLE paths
 such that $\gamma_t :=
\gamma(0,t]$ lies in $  D_1$.  Then $\mu$
and $\mu_1$ considered as measures on initial segments
$\gamma_t$  are mutually absolutely continuous with
Radon-Nikodym derivative
\begin{equation}  \label{jun3.2}
   \frac{d\mu_1}{d\mu}\, (\gamma_t) =
    \frac{\Psi_{  D_1 \setminus \gamma_t}(\gamma(t),
   w_1)}{\Psi_{D \setminus \gamma_t}(\gamma(t),
      w)} \,  
\exp \Bigl\{ \frac\cc2 \lm(\gamma_t,D\sm D_1;D) 
 \Bigr\}.
\end{equation}
We now explain the terms here.  First, the partition
functions $\Psi_{  D_1 \setminus \gamma_t}(\gamma(t),
    \tilde w)$ and $\Psi_{D \setminus \gamma_t}(\gamma(t),
      w)$ do not exist because $D \setminus \gamma_t$
  is not locally analytic at $\gamma(t)$.  However, the
  ratio of partition functions is well defined 
 using the rule \eqref{confconv},
\begin{equation}  \label{jun3.1}
  \frac{\Psi_{  D_1 \sm \gamma_t}(\gamma(t),
    \tilde w)}{\Psi_{D \sm \gamma_t}(\gamma(t),
      w)}  = 
        \frac{|f'(  w_1)|^{b_{ w_1}} \,
        \Psi_{  f(D_1 \sm \gamma_t)}(f(\gamma(t)),
    f( w_1))}{|f'(w)|^{b_w} \, 
    \Psi_{f(D \sm \gamma_t)}(f(\gamma(t)),
      f(w))} , 
      \end{equation}
      where $f:D \sm \gamma_t \rightarrow
      f(D \sm \gamma_t)$ is a conformal 
      transformation. 
   The parameter   $\cc  
   =(3\kappa-8)b$ is the \emph{central charge} (a
parameter from conformal field theory) and
$\lm(\gamma_t,D\sm D_1;D)$ is a geometric quantity, namely,
the Brownian loop measure (introduced in~\cite{LW}) of
the set of loops in $D$ which intersect both~$\gamma_t$
and~$D\sm D_1$.  For chordal SLE in simply connected domains we could equivalently consider Werner's SLE$_{8/3}$ loop measure~\cite{Werner} for $\Lambda$, which amounts to considering the outer boundaries of the Brownian loops.  For reversed radial SLE as presented in this paper, and for SLE in multiply connected domains~\cite{L11}, one sees topologically that it is no longer equivalent to consider the SLE$_{8/3}$ loop measure for $\Lambda$; the Brownian loop measure must be used instead.

If $D,D_1$ are simply connected, $w_1 = w \in \partial D$,
and $\partial D$ and $\partial D_1$ agree in a neighborhood
of $w$, then we can let $t = \infty$ and see that $\mu_1
\ll \mu$ with 
\begin{equation}
\label{bpert}
\frac{d\mu_1}{d\mu } (\gamma)
= 1\{\gamma\subset D_1\}
\exp \Bigl\{ \frac\cc2 \lm(\gamma,D\sm D_1;D) \Bigr\}.
\end{equation}
This  formula inspired the definition of \SLEk\ in
multiply connected domains that appeared in~\cite{L11}.
Essentially, for general domains $D$ with $z\in D$ and
$w\in D$ or $w$ a smooth boundary point, one defines
$\mu_D(z,w)$ so that~\eqref{bpert} and the conformal
covariance rule~\eqref{confconv}
hold across all domains.
The consistency of this definition is easy to check.
The definition does not immediately establish that
it is a finite measure, but this has been proved for
$\kappa \leq 8/3$ (in which case $\cc \leq 0$) and
for simply and doubly connected domains if $8/3 < \kappa
\leq 4$. 

To prove these results, one considers the case $D = \Half$,
$z=0$, $w= \infty$ with (by normalization) $\Psi_\Half(0,\infty) = 1$.
Then, given an initial segment $\gamma_t$, let $g_t:
\Half \setminus \gamma_t \rightarrow \Half$ be a conformal
transformation with $g_t(z)  = z + o(1)$ as $z \rightarrow \infty$.
Then, $g'(\infty) = 1$ and the ratio in \eqref{jun3.1} becomes
\[   K_t :=  \frac{\Psi_{D_1 \setminus \gamma_t}(\gamma_t, w_1)}
{\Psi_{\Half \setminus \gamma_t}(0,\infty)} =
   |g_t'(w_1)|^{b_{w_1}} \, \Psi_{g_t(D_t)}(U_t,f(w_1)), \]
   where $U_t = g_t(\gamma(t))$.  
 The probability measure $\mu_{D_1}^{\#}(0,w_1)$ is obtained
 by weighting by $K_t$ using the Girsanov theorem.  Since $K_t$
 is not a local martingale, one must first multiply by a 
 compensator, and computing this
gives the Brownian loop term.  That is to say, the term
in \eqref{jun3.2} considered as a function of $t$ is a local
martingale.  When one uses the Girsanov theorem, one finds that
one can define the probability measure  $\mu_{D_1}^{\#}(0,w_1)$
as a solution to the Loewner equation with a driving function
$U_t$ that has a drift.  These new processes are sometimes
called SLE$(\kappa,\rho)$ processes.
The method described in this paragraph was introduced in~\cite{LSWrest} in a slightly different form.

Another reason to consider SLE from the partition function
point of view is to compare it to discrete models at
criticality.
One expects the discrete models that converge to SLE to
have partition functions which converge to the SLE
partition functions when one normalizes by a power of the
lattice spacing.
This agrees with the power-law conformal covariance rule
for SLE partition functions.
Moreover, the Radon--Nikodym derivative~\eqref{jun3.2} of two
SLE measures on an initial segment
is entirely analogous to what occurs in
families of finite measures on paths in a discrete lattice
such as self-avoiding walk, loop-erased walk,
$\lambda$--self-avoiding walk and the percolation
exploration process.
Each of these families has a weight function $W$ such
that its measures $\mu_D(z,w)$ are related by the
first-step decomposition
\[
\mu_D(z,w) = \sum W(D, [z,\zeta]) \,\,
 [z,\zeta] \oplus \mu_{D\sm[z,\zeta]}(\zeta,w),
\]
in which the sum runs over all vertices $\zeta$ that adjoin
the starting point $z$.
In such families we have the Radon--Nikodym derivative
\[
\frac{d\mu_{D_1}(z,w_1)}
     {d\mu_{D  }(z,w  )} (\gamma_t)
=
\frac{W(D_1,\gamma_t)}
     {W(D,  \gamma_t)} \,
\frac{\tmass_{D_1\sm\gamma_t}(\gamma(t),w_1)}
     {\tmass_{D  \sm\gamma_t}(\gamma(t),w  )},
\]
where 
\[
W(D,\gamma_t)
=
\prod_{s=0}^{t-1} W(D\sm\gamma_s, [\gamma(s),\gamma({s+1})]).
\]
In fact, in the discrete models mentioned above, this last
quantity is a functional of the random walk loop measure.

If $z,w$ are boundary points and $D$ is simply connected,
Dapeng Zhan showed in~\cite{ZhanC} that $\mu_D(w,z)$ can be
obtained from $\mu_D(z,w)$ by reversing the paths.  
The argument also shows that in the probability measure
$\mu_D(z,w)$, the conditional distribution given both an initial
segment  and a terminal segment is chordal SLE in the
slit domain connecting the interior endpoints of the paths.
If $z \in D$ and $w \in \partial D$, it was suggested
in \cite{LJSP} to define $\mu_D(w,z)$ to be the reversal
of radial $\mu_D(z,w)$.   This definition was validated
by Zhan~\cite{ZhanWP}, who constructed a probability
measure on curves connecting boundary points on an annulus.
These measures satisfy the condition that, given an initial
and a terminal segment, radial SLE is distributed like
annulus SLE in the remaining domain.  In \cite{L11} it was
shown that this measure is the same as the probability
measure defined in \cite{LJSP}.  In particular, it was
shown that the annulus partition function is finite.

In this paper, we take a different, 
but as we show equivalent, approach to defining
reversed radial SLE by giving its Radon-Nikodym
derivative with respect to whole-plane SLE.  Here we are
using whole-plane SLE as the natural base measure for paths
starting at an interior point in the same way that
chordal SLE in $\Half$ is the base measure for SLE starting
at a boundary point.  
Motivated by formulas such as~\eqref{jun3.2}, we would
like to be able to write the Radon-Nikodym derivative of
reversed radial SLE with respect to whole-plane SLE as
\begin{equation}
\label{badrnd}
\frac{d\mu_D(0,w)}{d\mu_\C(0,\infty)}(\gamma_T)
= 
\exp \Bigl\{ \frac\cc2 \lm(\gamma_T,\C\sm D;\C) \Bigr\}
\frac{\tmass_{D\sm\gamma_T}(\gamma(T),w)}
     {\tmass_{\C\sm\gamma_T}(\gamma(T),\infty)}.
\end{equation}
This is false as written, because $\lm(\gamma_T,\C\sm
D;\C)$ is infinite.

To make sense of this Radon--Nikodym derivative, we
introduce a finite normalized quantity
$\lmstar(\gamma_T,\C\sm D)$ based on the loop measure,
which has many of the properties we want.  This is
similar  in spirit to ``Wick products''.
\begin{theorem} \label{main}
 If $V_1,V_2$ are disjoint nonpolar
closed subsets of the
Riemann sphere, then the limit
\begin{equation}  \label{feb28.1}
    \Lambda^*(V_1,V_2) = \lim_{r \downarrow 0}
             [\Lambda(V_1,V_2;\Outer_r) - \log \log(1/r)],
\end{equation}
exists where
\[   \Outer_r = \{z \in \C: |z| > r\}. \]
Moreover, if $f$ is a M\"obius transformation of the
Riemann sphere,
\[   \Lambda^*(f(V_1),f(V_2)) = \Lambda^*(V_1,V_2).\]
\end{theorem}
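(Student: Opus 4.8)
\emph{Proof strategy.}
The plan is to prove existence of the limit by a scale‑by‑scale analysis of $\loopm$, and then to deduce the M\"obius invariance from the existence together with the conformal invariance of $\loopm$ on $\Chat$. Since a single point is polar, adding or removing one point does not change $\loopm$; hence $\loopm$ on $\Chat$ is invariant under every M\"obius map $f$, and for such $f$ one has the exact identity $\Lambda(f(V_1),f(V_2);\Outer_r)=\Lambda(V_1,V_2;f^{-1}(\Outer_r))$, where $f^{-1}(\Outer_r)$ is the complement in $\Chat$ of a spherical disk shrinking to $p:=f^{-1}(0)$ as $r\downarrow0$. So it suffices to show that regularizing by complements of small disks shrinking to an arbitrary point produces the same constant at the same $\log\log$ rate.

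For existence, after a scaling assume $V_1\cup V_2\subset\D$ (we do not need $0\notin V_1\cup V_2$). For $0<r<R<1$, the quantity $\Lambda(V_1,V_2;\Outer_r)-\Lambda(V_1,V_2;\Outer_R)$ is finite — because $\Outer_s$ is conformally a bounded disk $D(0,1/s)$ via $z\mapsto1/z$ (off a polar point), in which the loop measure of loops meeting two disjoint compacts is finite — and equals the $\loopm$‑mass of loops meeting both $V_i$ whose minimum modulus lies in $(r,R]$. Slicing by minimum modulus I would write this as $\sum_k p_k$ with
\[
 p_k=\loopm\bigl\{\ell:\ \ell\cap V_i\neq\eset\ (i=1,2),\ \ \min\nolimits_{z\in\ell}|z|\in(2^{-k-1},2^{-k}]\bigr\},
\]
$k$ ranging over those $k$ with $2^{-k}\in(r,R]$. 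The crux is the estimate $p_k=\frac1k+O(k^{-2})$ as $k\to\infty$, with leading coefficient $1$ \emph{independent of $V_1,V_2$}: decompose each loop through its arc of minimum modulus, which sits near $\{|z|\asymp2^{-k}\}$; the rest of the loop is a bridge‑type path that must escape from scale $2^{-k}$ out to unit scale and return, and a planar gambler's‑ruin estimate makes the cost of that escape $\sim1/(k\log 2)$, the normalization of $\loopm$ supplying the constant, while once at unit scale the loop meets both $V_i$ with probability $1-O(1/k)$ — so the dependence on $V_1,V_2$, and whether $0\in V_1$, enters only through the summable $O(k^{-2})$ term. Summing and using $\sum_{k\le K}k^{-1}=\log K+\gamma+o(1)$ with $K\asymp\log_2(1/r)$ gives $\Lambda(V_1,V_2;\Outer_r)=\log\log(1/r)+c(V_1,V_2)+o(1)$, so $\Lambda(V_1,V_2;\Outer_r)-\log\log(1/r)$ is Cauchy as $r\downarrow0$; this defines $\Lambda^*(V_1,V_2)$.

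For M\"obius invariance, scalings and rotations are immediate, since $\loopm$ is invariant and a scaling sends $\Outer_r$ to $\Outer_{r/\lambda}$ while $\log\log(\lambda/r)-\log\log(1/r)\to0$. For general $f$ with $p=f^{-1}(0)\in\C$, the map $f^{-1}$ is conformal at $0$ with nonzero derivative, so $\bar D(p,cr)\subset f^{-1}(\bar D(0,r))\subset\bar D(p,Cr)$ for small $r$; by monotonicity of $\Lambda$ in the domain and $\log\log(1/(cr))-\log\log(1/(Cr))\to0$ it suffices to treat $f^{-1}(\Outer_r)=\Outer_r+p$, and re‑running the dyadic decomposition with ``distance to $p$'' in place of ``modulus'' gives the same limit — the limiting constant still being $\Lambda^*(V_1,V_2)$ because the leading term $1/k$ of $p_k$ does not depend on the point approached. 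When $p=f^{-1}(0)=\infty$ one instead obtains the domains $D(0,\rho)$, $\rho\uparrow\infty$, where the divergence comes from large loops; the same argument sliced by \emph{maximum} modulus gives leading coefficient $1$ and the same constant. Composing, $\Lambda^*$ is invariant under the full M\"obius group.

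The main obstacle is the per‑scale estimate $p_k=k^{-1}+O(k^{-2})$, proved \emph{uniformly} in the configuration of $V_1,V_2$ and in the point the loops approach (the origin, a general $p\in\C$, or $\infty$). All the analytic work lies there: planar Brownian escape and harmonic‑measure estimates sharp enough to isolate the universal coefficient $1$ and to show that the $V_i$‑dependent and point‑dependent parts are summable — the latter being exactly what forces the limiting constant to be independent of the point, hence what upgrades mere existence to M\"obius invariance.
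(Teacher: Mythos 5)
Your existence argument is essentially the paper's: decompose the loops by their closest approach to the origin into geometric shells, show that the mass of loops crossing from scale $2^{-k}$ to unit scale and hitting both sets is $k^{-1}$ plus corrections with summable (indeed $O(1/k)$) tails, and conclude that $\Lambda(V_1,V_2;\Outer_r)-\log\log(1/r)$ is Cauchy. That strategy is sound, and you correctly identify where the analytic work lies (the sharp harmonic-measure estimate showing that a bubble rooted at scale $2^{-k}$, conditioned to reach unit scale, hits a fixed nonpolar set except with probability $O(1/k)$); the only caveat is that when $0\in V_2$ one must route the error estimates through a nonpolar closed piece of $V_2$ avoiding the origin, as the paper does with $\hat V_2$ in Theorem~\ref{main1}.

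The gap is in the M\"obius invariance. You argue that because the leading per-shell term $1/k$ is the same whether one shrinks down at $0$, at a general point $p$, or at $\infty$, and because the point-dependent corrections are summable, ``the limiting constant is independent of the point.'' That inference is false: two regularizations with the same divergent part $\log\log(1/r)$ can converge to different constants, since the constant is (up to universal terms) precisely the \emph{sum} of the summable corrections, and summability says nothing about the value of that sum. What must actually be shown is that
\[
\Lambda\bigl(V_1,V_2;\Outer_r(p)\bigr)-\Lambda\bigl(V_1,V_2;\Outer_r\bigr)\longrightarrow 0,
\]
and this difference equals
\[
\Lambda\bigl(V_1,V_2,\C\sm\Outer_r;\Outer_r(p)\bigr)
-\Lambda\bigl(V_1,V_2,\C\sm\Outer_r(p);\Outer_r\bigr),
\]
the two ``cross terms'' counting loops that come within $r$ of one of the points but not the other. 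Neither term is negligible: the paper's Lemma~\ref{mar3.lemma4} and Lemma~\ref{klemma} compute each to be $\log 2+O(1/\log(1/r))$, with the universal constant $\log 2$ arising from the explicit integral $\log r\int_0^r ds/[s\log(1/s)\log(rs)]=\log 2$, and the base-point independence comes from the cancellation of these two $\log 2$'s in Theorem~\ref{main2}. Your proposal never engages with these cross terms, so the step from ``the limit exists for every normalization'' to ``the limits agree'' (needed for translations and, via the $\Disk_R$ normalization, for inversions) is unproven. The rest — dilations and rotations, and the reduction of a general M\"obius map to these cases via conformal invariance of the loop measure — is fine.
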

We could write the
assumption ``disjoint nonpolar closed subsets of the Riemann
sphere'' as ``disjoint closed subsets of $\C$, at
least one of which is compact, such that 
Brownian motion hits both
subsets at some positive time''.  

Roughly stated, the Brownian loop measure is infinite both because of short loops and because of long loops.  We remark that the loop measure term $\Lambda(V_1,V_2;\Outer_r)$ is necessarily finite in~\eqref{feb28.1}: intuitively, having $V_1,V_2$ disjoint prevents short loops, and long loops are very likely also to leave $\Outer_r$ at some point and hence not contribute to the loop measure term.

By M\"obius invariance, $\Lambda^*(V_1,V_2)$ could equally well be defined by shrinking down around a point other than the origin, or by replacing $\Lambda(V_1,V_2;\Outer_r)$ with the mass of loops hitting~$V_1$ and~$V_2$ that stay in a disk of large radius~$1/r$ as~$r\downarrow0$.  Indeed, this is how we prove the M\"obius invariance in Sect.~\ref{sec:blm}.

Having introduced $\lmstar$, we can
reformulate~\eqref{badrnd} correctly.
\begin{theorem}
\label{slethm}
Let $\kappa\le4$.  Let~$D$ be a simply connected domain
containing~$0$ and $w\in\p D$ a smooth boundary point.
Let~$T$ be a stopping time 
for whole-plane \SLEk\ from $0$ to $\infty$ such
that~$\gamma$ does not leave~$D$ by time~$T$.  Then the
Radon-Nikodym derivative of reversed radial \SLEk\ with
respect to whole-plane \SLEk\ up to time~$T$ is
\begin{equation}
\label{revradrnd}
\frac{d\mu_D(0,w)}{d\mu_\C(0,\infty)}(\gamma_T)
= c_1 
\exp \Bigl\{ \frac\cc2 \lmstar(\gamma_T,\C\sm D) \Bigr\}
\frac{\tmass_{D\sm\gamma_T}(\gamma(T),w)}
     {\tmass_{\C\sm\gamma_T}(\gamma(T),\infty)},
\end{equation}
where~$c_1$ is a constant depending only on $\kappa$.
\end{theorem}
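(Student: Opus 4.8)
The plan is to realize reversed radial \SLEk\ as a Girsanov tilt of whole-plane \SLEk\ and to check that the tilting density is exactly the right-hand side of~\eqref{revradrnd}. Work in the capacity parametrization from~$0$: let $g_t$ be the whole-plane Loewner maps taking $\C\sm\gamma_t$ onto $\Outer_1=\{|z|>1\}$ with $g_t(\infty)=\infty$, $g_t'(\infty)>0$, and write $e^{i\theta_t}=g_t(\gamma(t))$ for the image of the tip. Whole-plane \SLEk\ is the case in which $\theta_t$ is a rate-$\kappa$ Brownian motion with no drift, while reversed radial \SLEk\ from~$0$ to~$w$ in~$D$ is, by the work of Zhan~\cite{ZhanWP} and~\cite{L11}, an SLE$(\kappa,\rho)$-type process: in these same coordinates its driving function is a rate-$\kappa$ Brownian motion together with a drift determined by the images under $g_t$ of the marked data $(0,w,D)$. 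Using conformal covariance~\eqref{confconv} of the partition functions and M\"obius invariance of~$\lmstar$ (Theorem~\ref{main}), and reading the ratio of partition functions through the rule~\eqref{jun3.1}, set
\[
   M_t = c_1\exp\Bigl\{\tfrac{\cc}{2}\,\lmstar(\gamma_t,\C\sm D)\Bigr\}\,
   \frac{\tmass_{D\sm\gamma_t}(\gamma(t),w)}{\tmass_{\C\sm\gamma_t}(\gamma(t),\infty)}.
\]

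The first, and main, step is to show that $M_t$ is a local martingale for whole-plane \SLEk\ on the event $\{\gamma_t\subset D\}$. Factor $M_t=c_1 L_t P_t$ with $L_t=\exp\{\tfrac{\cc}{2}\lmstar(\gamma_t,\C\sm D)\}$ and $P_t$ the partition-function ratio. After transport by $g_t$, the denominator of $P_t$ is the constant $\tmass_{\Outer_1}(1,\infty)$ times a deterministic power of $g_t'(\infty)$, by rotational symmetry of $\tmass_{\Outer_1}(\,\cdot\,,\infty)$ on the unit circle; hence the stochastic part of $P_t$ comes only from the numerator $\tmass_{D\sm\gamma_t}(\gamma(t),w)=|g_t'(w)|^{b}\,\tmass_{g_t(D\sm\gamma_t)}(e^{i\theta_t},g_t(w))$, up to a deterministic factor. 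Applying It\^o's formula along the whole-plane Loewner flow, the drift of $\log P_t$ is an explicit expression in the SLE partition function and its first and second derivatives at $e^{i\theta_t}$; exactly as in the chordal computation recalled after~\eqref{jun3.2}, this drift does not vanish on its own. The point is that $d\lmstar(\gamma_t,\C\sm D)$ is a pure finite-variation term: the counterterm $\log\log(1/r)$ in~\eqref{feb28.1} is deterministic, so the increment of $\lmstar$ is the rate at which the Brownian loop measure of loops meeting both $\gamma_t$ and $\C\sm D$ grows as the tip advances, which by conformal invariance of the loop measure and the small-curve estimate for $\lmstar$ proved earlier in the paper is a computable local quantity in the coordinates $g_t$. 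Substituting, one verifies that the drift of $\tfrac{\cc}{2}\lmstar(\gamma_t,\C\sm D)$ cancels the drift of $\log P_t$, so that $M_t$ has no drift; this is the whole-plane counterpart of the statement that the expression in~\eqref{jun3.2} is a local martingale in~$t$.

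Granting this, define a measure $\tilde\Prob$ by $d\tilde\Prob/d\Prob|_{\F_t}=M_t/M_0$, stopped before $\gamma$ leaves~$D$ (after the routine localization needed to make the stopped density an honest martingale). The degeneracy as $t\downarrow0$, where $\gamma_t$ collapses to $\{0\}$, is handled by starting the comparison at a small positive capacity and invoking the small-curve estimate for $\lmstar$ together with the fact that whole-plane and reversed radial \SLEk\ have the same infinitesimal law at the starting point; this both shows the two laws are mutually absolutely continuous on short initial segments and pins down the $\kappa$-dependent constant $c_1$. Under $\tilde\Prob$, Girsanov's theorem adds to $\theta_t$ a drift equal to $\kappa$ times the $\theta$-derivative of $\log M_t$; reading this off from the formulas for $L_t$ and $P_t$ above, it is precisely the drift in the SLE$(\kappa,\rho)$ description of reversed radial \SLEk\ from~$0$ to~$w$ in~$D$. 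Hence $\tilde\Prob$ agrees with reversed radial \SLEk\ up to time~$T$, which is~\eqref{revradrnd}.

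I expect the local-martingale step to be the real obstacle, and within it the identification of $d\lmstar(\gamma_t,\C\sm D)$ as an explicit finite-variation term and the check that its drift annihilates the drift of the partition-function ratio. That is exactly where the renormalization built into~$\lmstar$ and the earlier small-curve estimate do the work; the reductions, the Girsanov computation, and the matching of~$c_1$ are comparatively routine once the candidate density is correctly in hand.
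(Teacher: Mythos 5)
Your overall shape---define the candidate density $M_t$, show it is a (local) martingale for whole-plane SLE, tilt by Girsanov, and identify the tilted law---is broadly the right shape, and the paper's own argument can be read as a soft version of it. But there are two genuine gaps. The more serious one is the identification step. You propose to recognize the Girsanov drift as ``precisely the drift in the SLE$(\kappa,\rho)$ description of reversed radial SLE.'' No such independent drift formula exists: reversed radial SLE is \emph{defined} as the time-reversal of radial SLE, and its only available characterization (via \cite{ZhanWP} and \cite{L11}) is that, given an initial segment $\gamma_\tau$, the remainder is crossing annulus SLE $\mu^\#_{D\sm\gamma_\tau}(\gamma(\tau),w)$, whose driving drift involves the transcendental annulus partition function. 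So there is nothing explicit to match your Girsanov drift against, and the matching would anyway have to pass through exactly the annulus-SLE machinery you are trying to avoid. The paper instead shows directly that the candidate density is a martingale (by writing $M_T=XY$ with $X$ being $\F_\tau$-measurable and computing $\E[Y\mid\F_\tau]$ from Proposition~\ref{pfprop} and the domain Markov property---no It\^o calculus or drift cancellation is needed), deduces that conditionally on $\gamma_\tau$ the tilted law is crossing annulus SLE, and then concludes by combining reversibility of annulus SLE with the convergence $\mu^\#_{D\sm\gamma_t}(w,\gamma(t))\to\mu^\#_D(w,0)$ as $t\downarrow0$ (Corollary~\ref{cor:gamma-convergence}, which rests on Theorem~\ref{anntorad} and continuity of radial SLE at its terminal point).

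The second gap is that you treat the behaviour as $t\downarrow0$ as routine. The assertion that ``whole-plane and reversed radial SLE have the same infinitesimal law at the starting point,'' the finiteness of the total mass, and the value of $c_1$ all require the quantitative inputs $\Lambda^*(\gamma_t,D^c)=-\log\log(1/\psi'(0)t)+O(t)$ (Proposition~\ref{good-lmstar-estimate}) and the annulus partition function asymptotics~\eqref{nov28.2}; the first of these is one of the main technical results proved in this paper, not something that can be invoked. Relatedly, your local-martingale step is only asserted: you correctly observe that the increment of $\lmstar$ is a finite-variation functional of the loop measure, but the claimed cancellation of drifts is never computed, and carrying it out would require the PDE satisfied by the annulus partition function. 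In short, the two steps you label ``comparatively routine''---identifying the tilted law and handling $t\downarrow0$---are where the actual content of the theorem lies.
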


This theorem is also relevant to boundary/bulk SLE, the
natural generalization of radial SLE to non--simply
connected domains.
\begin{corollary}
\label{boundarybulkcor}
Let $D$ be a complex domain containing $0$, not necessarily
simply connected, and $w\in\p D$ a smooth boundary point.
Then the measure $\mu_D(0,w)$ defined by~\eqref{revradrnd}
is the reversal of boundary/bulk SLE $\mu_D(w,0)$, 
as defined in~\cite{L11}.
\end{corollary}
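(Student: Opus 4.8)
The plan is to reduce to the simply connected case, where Theorem~\ref{slethm} does the work, and then to identify the right-hand side of~\eqref{revradrnd} for a general domain with the measure of~\cite{L11}, by checking that it satisfies the Brownian loop-measure perturbation rule~\eqref{bpert} relative to simply connected subdomains.

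First the simply connected case. If~$D$ is simply connected, boundary/bulk SLE $\mu_D(w,0)$ is ordinary radial \SLEk, and the reversal of radial \SLEk\ is reversed radial \SLEk\ by Zhan's reversibility theorem~\cite{ZhanWP} (see also~\cite{ZhanC,LJSP,L11}). By Theorem~\ref{slethm} the latter is exactly the measure defined by the right-hand side of~\eqref{revradrnd}, so the corollary holds when~$D$ is simply connected.

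Now let~$D$ be arbitrary, and write $\hat\mu_D$ for the measure on curves from~$0$ to~$w$ defined by the right-hand side of~\eqref{revradrnd} (using the multiply connected partition function of~\cite{L11}). By the consistency noted just after~\eqref{bpert}, a family of measures that agrees with reversed radial \SLEk\ on simply connected domains and satisfies~\eqref{bpert} across all domains is unique: for general~$D$ the measure is recovered from its restrictions to simply connected subdomains~$D_1$ by undoing the loop reweighting in~\eqref{bpert}. It therefore suffices to prove that~$\hat\mu_D$ obeys~\eqref{bpert} relative to every simply connected $D_1\subseteq D$ that contains~$0$ and agrees with~$D$ near~$w$; combined with the first paragraph (which gives that~$\hat\mu_{D_1}$ is reversed radial \SLEk) this forces $\hat\mu_D$ to equal the measure of~\cite{L11}, which is what we want. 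Computing $d\hat\mu_{D_1}/d\hat\mu_D$ on an initial segment $\gamma_t\subset D_1$ directly from~\eqref{revradrnd}, the constant~$c_1$ and the whole-plane factor $\tmass_{\C\sm\gamma_t}(\gamma(t),\infty)$ cancel, leaving
\[
\frac{d\hat\mu_{D_1}}{d\hat\mu_D}(\gamma_t)
= \frac{\tmass_{D_1\sm\gamma_t}(\gamma(t),w)}{\tmass_{D\sm\gamma_t}(\gamma(t),w)}\,
\exp\Bigl\{\tfrac{\cc}{2}\bigl[\lmstar(\gamma_t,\C\sm D_1)-\lmstar(\gamma_t,\C\sm D)\bigr]\Bigr\}.
\]
The exponential is handled by the additivity that drops out of the proof of Theorem~\ref{main}: since $\C\sm D\subseteq\C\sm D_1$, the $\log\log(1/r)$ terms cancel in $\lm(\gamma_t,\C\sm D_1;\Outer_r)-\lm(\gamma_t,\C\sm D;\Outer_r)$, which is the mass of loops in $D\cap\Outer_r$ meeting~$\gamma_t$ and $D\sm D_1$ and so increases, as $r\downarrow0$, to the finite quantity $\lm(\gamma_t,D\sm D_1;D\sm\{0\})$. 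Letting~$t$ increase to the hitting time of~$w$, the ratio of partition functions converges (the divergences at~$w$ being common to~$D$ and~$D_1$, by the boundary behaviour of the partition function in~\cite{L11}), and combining the two contributions yields precisely~\eqref{bpert}; hence $\hat\mu_D=\mu_D(0,w)$ is the reversal of boundary/bulk SLE $\mu_D(w,0)$.

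The main obstacle is the loop-measure bookkeeping around the marked interior point~$0$: the renormalization in~$\lmstar(\gamma_t,\C\sm D)$ removes the long loops but not those passing through~$0$, and passing to the full curve removes~$0$ from the domain altogether, so matching the precise loop term of~\eqref{bpert} requires careful use of the multiply connected partition function of~\cite{L11} and of the way it behaves as the tip approaches~$w$; by contrast, the double-logarithm cancellation that makes $\lmstar(\gamma_t,\C\sm D_1)-\lmstar(\gamma_t,\C\sm D)$ finite is a routine consequence of Theorem~\ref{main} and its proof. One further, minor, point: an initial segment of $\gamma$ encircling a hole of~$D$ lies in no simply connected subdomain, but the identity, valid up to the first such time, propagates since both sides are Markovian weightings of whole-plane \SLEk\ along the Loewner flow.
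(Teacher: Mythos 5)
Your proposal is correct and follows essentially the same route as the paper: reduce to a simply connected $D_1\subset D$ handled by Theorem~\ref{slethm}, verify the boundary perturbation rule~\eqref{bpert} for the measure defined by~\eqref{revradrnd} via the loop-measure decompositions~\eqref{cascade} and~\eqref{loopdecomp} (your $\log\log$ cancellation is exactly~\eqref{loopdecomp}), and invoke the fact that boundary/bulk SLE in~\cite{L11} is characterized by this restriction rule. The two caveats you flag at the end are non-issues: loops through the single point~$0$ are null for the loop measure, and since $\kappa\le4$ the curve is a simple arc, which always admits a simply connected tubular neighbourhood inside~$D$ agreeing with~$D$ near~$w$.
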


We now describe the structure of the paper.
In Sect.~\ref{sec:prelim} we introduce some notation and
present a number of preliminary results about Brownian
motion, conformal mapping, and SLE, many of which have been
proved elsewhere.
In Sect.~\ref{sec:sle} we deal with reversed radial SLE and
prove Theorem~\ref{slethm}.
In Sect.~\ref{sec:blm} we deal with the normalized loop measure independently of any SLE notions and prove  Theorem~\ref{main}.
Proposition~\ref{good-lmstar-estimate}, which is an
estimate for the normalized loop measure of loops hitting
both boundary components of a conformal annulus, is proved 
in Sect.~\ref{subs:lmstar-est}.
This estimate is more precise than Theorem~\ref{slethm}'s
proof requires, and may be of independent interest.

\section{Preliminary results}
\label{sec:prelim}

We will use the following notation:
\begin{gather*}
    \Disk_r = \{z: |z| < r\},  \qquad  \Disk = \Disk_1,  
    \qquad \Half = \{z: \operatorname{Im} z > 0\}, \\
    \Outer_r = \{z: |z| > r\}, \qquad  \Outer = \Outer_1,
          \qquad \Outer_r(w) = w + \Outer_r, \\
    A_{r,R} = \Disk_R \cap \Outer_r = \{z: r < |z| < R\},
          \qquad A_R = A_{1,R}, \\
     C_r = \p \Disk_r = \p \Outer_r = \{z:|z| = r\},
          \qquad C_r(w) = \p \Outer_r(w) = \{z: |w-z| = r\}.
\end{gather*}
For $S\subset\C$, we denote the complement $\C\sm S$
by~$S^c$.

The implicit constants in all $O(\cdot)$ terms are
universal unless otherwise stated.
The constants in a $O_r(\cdot)$ term may depend on $r$ but
not on any other quantity.
The notation $x\asymp y$ means that there is a universal
constant $c>0$ such that $c\inv<x/y<c$.

\subsection{Complex Brownian motion}
\label{sec:cbm}

We say that a subset $V$ of $\C$ is {\em nonpolar} if it is
hit by Brownian motion.  More precisely, $V$ is nonpolar if
for every $z\in \C$, the probability that a Brownian motion
starting at $z$ hits $V$ is positive.  Since Brownian
motion is recurrent we can replace ``is positive'' with
``equals one''.  In a slight abuse of terminology, we will call a domain
(connected open subset) $D$ of $\C$ nonpolar if $\p D$ is
nonpolar.

\subsubsection{Harmonic measure and excursion measure}
\label{sec:harmmeas}

If $B_t$ is a complex Brownian motion and $D$ is a domain,
let
\[
\tau_D = \inf\{t: B_t \notin D\}.
\]
A domain $D$ is nonpolar if and only if $\Prob^z\{\tau_D
< \infty\}=1$ for every $z$.  In this case we define
harmonic measure of $D$ at $z \in D$ by
\[
h_D(z,V) = \Prob^z\{B_{\tau_D} \in V\}.
\]
If $V$ is smooth then we can write
\[
h_D(z,V) = \int_V h_D(z,w) \, |dw|,
\]
where $h_D(z,w)$ is the {\em Poisson kernel}.  If $z \in \p
D \setminus V$ and $\p D$ is smooth near~$z$, we define the
{\em excursion measure} of $V$ in $D$ from $z$ by
\[
\exc_D(z,V) = \exc(z,V;D) = \p_{\bf n} h_D(z,V),
\]
where ${\bf n} = {\bf n}_{z,D}$ denotes the unit inward
normal at $z$.  If $V$ is smooth, we can write
\[
\exc_D(z,V) = \int_V h_{\p D}(z,w) \, |dw|,
\]
where $h_{\p D}(z,w) := \p_{\bf n}h_D(z,w)$ is the {\em
excursion} or {\em boundary Poisson kernel}.  (Here the
derivative $\p_{\bf n}$ is applied to the first variable.)
One can also obtain the excursion Poisson kernel as the
normal derivative in both variables of the Green's
function; this establishes symmetry, $h_{\p D}(z,w) = h_{\p
D}(w,z)$.  If~$f: D \rightarrow f(D)$ is a conformal
transformation, then (assuming smoothness of~$f$ at
boundary points at which $f'$ is taken)
\begin{align*}
h_D(z,V)      &= h_{f(D)}(f(z),f(V)),  \\
h_D(z,w)      &= |f'(w)| \, h_{f(D)}(f(z), f(w)), \\
\exc_D(z,V)   &= |f'(z)| \, \exc_{f(D)}(f(z),f(V)), \\
h_{\p D}(z,w) &= |f'(z)| \, |f'(w)| \, h_{\p 
    f(D)} (f(z), f(w)).
\end{align*}

\subsubsection{Brownian bubble measure}
\label{sec:brownbub}

If $D$ is a nonpolar domain and $z \in \p D$ is an analytic
boundary point (i.e., $\p D $ is analytic in a neighborhood
of $z$), the Brownian bubble measure~$m_D(z)$ in~$D$ at~$z$
is a sigma-finite measure on loops
$\gamma:[0,t_\gamma]\rightarrow \C$ with $\gamma(0)
= \gamma(t_\gamma) = z$ and $\gamma(0,t_\gamma)\subset D$.
It can be defined as the limit as $\epsilon \downarrow 0$
of $\pi \, \epsilon^{-1} \, h_D(z + \epsilon {\bf n},z)$
times the probability measure on paths obtained from
starting a Brownian motion at $z + \epsilon {\bf n}$ and
conditioning so that the path leaves~$D$ at~$z$.  Here
${\bf n} = {\bf n}_{z,D}$ is the inward unit normal.  If
$\tilde D \subset D$ agrees with $D$ in a neighborhood of
$z$, then the bubble measure in~$\tilde D$ at~$z$,
$m_{\tilde D}(z)$, is obtained from $m_D(z)$ by
restriction.  This is also an infinite measure but the
difference $m_D(z) - m_{\tilde D}(z)$ is a finite measure.
We will denote its total mass by
\[
m(z;D,\tilde D) = \|m_D(z) - m_{\tilde D}(z)\|.
\]
The normalization of $m$ is chosen so that
\begin{equation}  \label{mar2.13}
  m(0;\Half, \Half \cap \Disk) = 1.
\end{equation}

\begin{remark}
The factor of $\pi$ in the bubble measure was put in so
that \eqref{mar2.13} holds.  However, the loop measure in
the next section does not have this factor, so we will have
to divide it out again.  For this paper, it would have been
easier to have defined the bubble measure without the $\pi$
but we will keep it in order to match definitions
elsewhere.
\end{remark}

The bubble measure is conformally covariant~\cite{LW}: if
$f:D\to f(D)$ is conformal and $z\in\p D$ and $f(z)$ are
smooth boundary points, then
\begin{equation}
\label{bubcc}
f\circ m_D(z) = |f'(z)|^2 \, m_{f(D)}(f(z)).
\end{equation}

\subsubsection{Brownian loop measure}
\label{sec:brownloop}

\begin{definition}
A \emph{rooted loop} in a domain $D\subset\C$ is a
continuous map $\gamma:[0,t_\gamma]\to D$ with $t_\gamma>0$
and $\gamma(0)=\gamma(t_\gamma)$.  Its \emph{root} is
$\gamma(0)$.

An \emph{unrooted loop} in $D$ is an equivalence class of
rooted loops in $D$ under the equivalence
$\gamma\sim\gamma_s$ for all $s$, where
$\gamma_s(t)=\gamma(s+t)$ (considering $\gamma$ as a
$t_\gamma$-periodic function) and $t_{\gamma_s}=t_\gamma$.
\end{definition}

The Brownian loop measure $\loopm_D$ is a sigma-finite
measure on unrooted loops in a domain $D$.  The
measure~$\loopm_\C$ can be defined as follows.
\begin{itemize}
\item  Consider the measure on triples
 $(z,
t_\gamma, \tilde\gamma)$ given by
\[ 
\text{area} \times \frac{dt}{2\pi t^2} 
\times \left( \text{length $1$ Brownian bridge from $0$ in $\C$}\right).
\]
\item Let
\[  \gamma(s) = z+\sqrt{t_\gamma\mathstrut}\,\tilde\gamma (  s
/t_\gamma).\]
\item Project this measure onto unrooted loops by forgetting
the root.
\end{itemize}
Then~$\loopm_D$ is defined to be~$\loopm_\C$ restricted to
loops in~$D$.

We have defined the measure so that it satisfies the  restriction
property: if $D' \subset D$, then $\loopm_{D'}$ is $\loopm_D$
restricted to loops in $D'$. 
The other important feature of the Brownian loop measure is
its  conformal invariance, which was proved in~\cite{LW},
Proposition~6.
\begin{proposition}
If $f:D\to f(D)$ is a conformal map, then
$f\circ\loopm_D=\loopm_{f(D)}$.
\end{proposition}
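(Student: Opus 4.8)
The plan is to compute the pushforward $f\circ\loopm_D$ directly from the bridge construction of the excerpt and to see the claimed equality reduce to the conformal time change of Brownian motion, with the only real work concentrated in the treatment of the root. Writing the \emph{rooted} loop measure as
\[
   \tilde\mu=\int_\C\int_0^\infty\frac{dt}{2\pi t^2}\,\beta_{z,t}\;dA(z),
\]
where $\beta_{z,t}$ is the law of the duration-$t$ Brownian bridge from $z$ to $z$ (obtained from the length-$1$ bridge $\tilde\gamma$ by $\gamma(s)=z+\sqrt{t}\,\tilde\gamma(s/t)$) and $dA$ is area measure, we have that $\loopm_\C$ is the image of $\tilde\mu$ under forgetting the root and $\loopm_D=\loopm_\C|_{\{\gamma\subset D\}}$. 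Throughout, $f\circ\gamma$ is taken with the canonical conformal time change that turns the image into a Brownian-type loop; this convention is harmless for the applications in this paper, since only the trace of each loop enters $\lm(\,\cdot\,,\,\cdot\,;\,\cdot\,)$.

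First I would invoke the single analytic input: for $f\colon D\to f(D)$ conformal and $B$ a Brownian motion, $f(B)$ is a time-changed Brownian motion with clock $\sigma(s)=\int_0^s|f'(B_u)|^2\,du$; applied to bridges this is the conformal covariance of the bridge measure, sending a loop of $\beta_{z,t}$ that stays in $D$ to a bridge at $f(z)$ of duration $\tau=\int_0^t|f'(\gamma(s))|^2\,ds$. Re-expressing $\tilde\mu$ as an integral over the image root $w=f(z)\in f(D)$ produces the area Jacobian $dA(z)=|f'(z)|^{-2}\,dA(w)$, so invariance requires a compensating factor $|f'(z)|^2$. That factor comes from the time weight $dt/(2\pi t^2)$ under the duration change $t\mapsto\tau$ together with the bridge covariance. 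The mechanism is transparent for a scaling $f(z)=az$: there $\tau=|a|^2t$, the bridge maps exactly to a bridge (Brownian scaling contributes no extra factor), and $dt/t^2=|a|^2\,d\tau/\tau^2$ supplies precisely the needed $|a|^2=|f'|^2$.

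The main obstacle is that for a non-affine $f$ this compensation is \emph{not} root-local, and correspondingly $\tilde\mu$ itself is \emph{not} conformally invariant. Under the conformal clock a root at time $s$ on $\gamma$ is carried to time $\hat s(s)=\int_0^s|f'(\gamma(u))|^2\,du$ on the image loop, so a root distributed uniformly in time is pushed to one with density proportional to $|f'|^{-2}$ in the image time parameter, and both $\tau$ and the bridge Radon--Nikodym factor depend on the whole path rather than on $z$ alone. The crux is to show this distortion disappears after forgetting the root. I would do this through the re-rooting lemma for $\tilde\mu$, which identifies the measure it places on the rootings of a fixed unrooted loop as uniform in the time parameter; integrating the path-dependent compensation against this uniform root density, and using $d\hat s=|f'|^2\,ds$ to pass to the image time parameter, converts the nonuniform image root density back to the uniform one and reassembles the compensation into exact preservation of the mass of the bundle over each unrooted loop. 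Combining the cancelled area Jacobian, the transported $dt/t^2$ weight, and this root-averaging identity yields $f\circ\loopm_D=\loopm_{f(D)}$. I expect this last reconciliation --- making precise that uniform-in-time rooting is what both sides see once the root is forgotten, so that the path-dependent time-change factor collapses to the area compensation --- to be the delicate step; the conformal time change of Brownian motion and the area-versus-time-weight cancellation are routine by comparison.
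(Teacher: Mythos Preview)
The paper does not prove this proposition; it simply cites \cite{LW}, Proposition~6. Your outline is essentially the argument of that reference: express the rooted loop measure via the bridge decomposition, push it forward by the conformal time change of Brownian motion, and observe that while the rooted measure fails to be invariant (the root density is distorted by $|f'|^{-2}$ in the image time), forgetting the root restores invariance because the conditional root distribution on each unrooted loop is uniform in time on both sides. You have correctly located the substantive step --- the re-rooting identity that collapses the path-dependent time-change factor --- and the rest (Brownian time change, area Jacobian versus $dt/t^2$) is indeed routine. There is nothing to compare against in the paper itself, but your plan matches what the cited source does.
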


For computational purposes it is useful to write the
measure in terms of the bubble measure, which can be done
in many ways.  We will use the following expression,
which assigns to each unrooted loop the root furthest from
the origin:
\begin{equation}
\label{furthestroot}
\loopm_\C
= \frac1\pi \int_0^{2\pi}\int_0^\infty m_{\D_r}(re^{i\theta})
 \,r \,dr \,d\theta.
\end{equation}
(For a proof, see~\cite{LW}, Proposition~7, and
apply~\eqref{bubcc}.)
To be precise, we are considering the right hand side as a
measure on unrooted loops.
We can also assign to each unrooted loop the root closest
to the origin:
\begin{equation}  \label{march1.1}
  \loopm_\C = \frac 1 \pi \int_0^{2\pi}
   \int_0^\infty m_{\Outer_r}(re^{i\theta}) \, r \, dr\,
  d \theta .
\end{equation}
If $\overline \Disk_r \subset D$, then the Brownian loop
measure in $D$ restricted to loops that intersect
$\overline \Disk_r$ can be written as
\begin{equation} \label{feb27.3.new}
   \frac 1 \pi \int_0^{2\pi}
   \int_0^r m_{D_s}(se^{i\theta}) \, s \, ds\,
  d \theta , 
\end{equation}
where $D_s = D \cap \Outer_s$.  If $r_1 < r$, then the
Brownian loop measure restricted to loops in $\Outer_{r_1}$
that intersect $\overline \Disk_r$ is given by
\[  \frac 1 \pi \int_0^{2\pi}
   \int_{r_1}^r m_{D_s}(se^{i\theta}) \, s \, ds\,
  d \theta .\] 
Using this and appropriate properties of the bubble
measure we can conclude the following.

\begin{lemma}    \label{diamlem}
  For every $0 < s< r  < \infty$ and $d >0$, the
loop measure of the set of loops in 
$\Outer_s$ of diameter
at least $d$ that intersect $\Disk_r$ is finite. 
\end{lemma}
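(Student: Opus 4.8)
The plan is to bound the loop measure by splitting loops according to the radius $s$ of their closest-to-origin root, using the representation
\[
\frac1\pi \int_0^{2\pi}\int_{s}^{r} m_{D_t}(te^{i\theta}) \, t \, dt \, d\theta
\]
from~\eqref{feb27.3.new}, here with $D = \Outer_s$ so that $D_t = \Outer_t$ for $t \ge s$. (Loops that intersect $\D_r$ and lie in $\Outer_s$ must have their closest root at some radius $t \in [s,r)$, since a loop touching $\D_r$ while staying in $\Outer_s$ has a point of modulus less than $r$ and a point of modulus at least $s$.) So it suffices to show that for each $t \in [s,r)$ the bubble-measure contribution of loops of diameter at least $d$ rooted at $te^{i\theta} \in C_t$ in the domain $\Outer_t$ is finite after integrating $t\,dt\,d\theta$ over the compact range. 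Since the integral $\int_s^r t\,dt\,d\theta$ is finite, the real task is a uniform-in-$t$ bound on the mass, under the bubble measure $m_{\Outer_t}(te^{i\theta})$, of the event that the loop has diameter at least $d$.

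First I would use the scaling and rotational symmetry of the bubble measure: by~\eqref{bubcc} applied to $z \mapsto e^{-i\theta} z / t$, which maps $\Outer_t$ to $\Outer_1 = \Outer$ and $te^{i\theta}$ to $1$, the mass of $\{$diameter $\ge d\}$ under $m_{\Outer_t}(te^{i\theta})$ equals $t^{-2}$ times the mass of $\{$diameter $\ge d/t\}$ under $m_{\Outer}(1)$. For $t \in [s,r)$ we have $d/t > d/r$, so this mass is at most $t^{-2} \le s^{-2}$ times $\beta(d/r)$, where $\beta(a) := m_{\Outer}(1)\{\text{diameter} \ge a\}$ is a decreasing function of $a$. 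Thus the whole loop measure is bounded by $\tfrac1\pi \cdot s^{-2} \beta(d/r) \int_0^{2\pi}\int_s^r t\,dt\,d\theta = s^{-2}\,\beta(d/r)\,(r^2-s^2)$. Everything is finite once we know $\beta(a) < \infty$ for every $a > 0$, i.e. the Brownian bubble measure in $\Outer$ rooted at $1$ of loops of diameter at least $a$ is finite.

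The main obstacle, then, is precisely this last finiteness: $m_\Outer(1)\{\text{diam} \ge a\} < \infty$. The bubble measure is infinite only because of arbitrarily small loops (its definition as an $\epsilon \downarrow 0$ limit concentrates mass on tiny excursions near the root), so excluding loops of diameter below a fixed threshold should leave a finite mass; but one must argue this carefully since $\Outer$ is unbounded and we should also make sure large loops do not cause a problem. I would handle it by comparing with the half-plane bubble measure whose normalization is fixed in~\eqref{mar2.13}: map a neighborhood of the root $1 \in \partial\Outer$ conformally to a neighborhood of $0 \in \partial\Half$, so that loops of diameter $\ge a$ staying near the root correspond (up to bounded conformal distortion, contributing a bounded Jacobian factor via~\eqref{bubcc}) to half-plane bubbles of diameter bounded below; the mass $m(0;\Half,\Half\cap\D_\rho)$ of half-plane bubbles exiting $\D_\rho$ is finite by~\eqref{mar2.13} and scaling. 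For the loops in $\Outer$ that are \emph{not} confined near the root — i.e. those that wind far out — one notes that a bubble rooted at $1$ of large diameter is, after the conformal map $z \mapsto 1/z$ sending $\Outer$ to $\D$ and $1$ to $1$, a bubble in $\D$ rooted at $1$ that comes close to the center; by the same comparison such bubbles have finite mass. Patching these finitely many cases together (near the root via the half-plane comparison, away from the root via the disk picture, with the conformal covariance factor~\eqref{bubcc} absorbed into universal constants on the relevant compact regions) gives $\beta(a) < \infty$, completing the proof.
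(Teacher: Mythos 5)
Your proof is correct and follows the route the paper intends for this lemma: the closest-root decomposition \eqref{feb27.3.new} restricted to radii in $[s,r]$, the scaling covariance \eqref{bubcc} of the bubble measure, and the finiteness of the bubble measure restricted to loops of diameter bounded below. For that last step you could simply invoke the already-stated fact that $m(z;D,\tilde D)$ is finite, applied with $D=\Outer$, $z=1$ and $\tilde D=\Outer\cap\{w:|w-1|<a/2\}$, since every loop rooted at $1$ of diameter at least $a$ must exit that neighbourhood; your half-plane/disk comparison is a correct, if slightly more roundabout, way of establishing the same fact.
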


\begin{remark}  This result is not true for $s = 0$.  The
Brownian loop measure of loops in $\C$ of diameter
greater than $d$ that intersect the unit disk is
infinite.  See, e.g., Lemma \ref{feb25.lem1} below.
\end{remark}

Conformal invariance implies that the Brownian loop
measure of loops in~$A_{r,2r}$
that separate the origin from infinity
is the same for all $r$.
It is easy to see that this measure is positive and the
last lemma shows that it is finite. It follows that
the measure of the set of loops that surround the origin
is infinite.

If $V_1,V_2,\ldots$ are closed subsets of the Riemann sphere
and $D$ is a nonpolar domain, then
\[   \Lambda(V_1,V_2,\ldots,V_k; D) \]
is defined to be the loop measure of the set of loops in
$D$ that intersect all of the sets $V_1,\ldots,V_k$. 
Note that
\begin{multline}  \label{cascade}
 \Lambda(V_1,V_2,\ldots,V_{k}; D)  \\
 = \Lambda(V_1,V_2,\ldots,V_{k+1}; D)
 +  \Lambda(V_1,V_2,\ldots,V_{k}; D \setminus
  V_{k+1}) .
\end{multline}
  If $V_1,V_2,
\ldots, V_k$ are the traces of simple curves
that pass through the origin, then the comment in the last
paragraph shows that for all $r > 0$,
\[    \Lambda(V_1,V_2,\ldots, V_k;\Disk_r) = \infty.\]

\subsection{Conformal mapping}
\label{sec:confmap}

\subsubsection{Univalent functions and capacity}  \label{unisec}

A \emph{univalent function} is a one-to-one holomorphic
function.
We will need the following version of the growth and
distortion theorems for univalent functions.
For a proof, see~\cite{Lconf}, Theorem~3.21 and
Proposition~3.30.

\begin{proposition}
\label{growthdist}
If $f$ is univalent on $\Outer_\rho$,
$f(\infty)=\infty$ and $f'(\infty)=1$, then if
$r=\rho/|z|<1$,
\[
\begin{gathered}
f(z)=z+O(\rho),\\
\biggl(\frac{1-r}{1+r}\biggr)^3
\le |f'(z)| \le \biggl(\frac{1+r}{1-r}\biggr)^3.
\end{gathered}
\]
\end{proposition}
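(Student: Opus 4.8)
The plan is to reduce the statement to the classical growth and distortion theorems of Koebe for the class $S$ of functions univalent on the unit disk $\D$ normalized by $\phi(0)=0$, $\phi'(0)=1$. First I would remove the parameter $\rho$ by scaling: setting $F(\zeta)=\rho\inv f(\rho\zeta)$ gives a function univalent on $\Outer=\{|\zeta|>1\}$ with $F(\infty)=\infty$, $F'(\infty)=1$, and $f'(z)=F'(\zeta)$, $f(z)-z=\rho\,(F(\zeta)-\zeta)$ for $\zeta=z/\rho$, $r=1/|\zeta|$. It therefore suffices to prove $F(\zeta)=\zeta+O(1)$ together with the two-sided bound on $|F'(\zeta)|$ in terms of $r$.

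For the derivative bound I would pass to the disk by inversion. Since the estimate on $|F'|$ is unchanged if we translate $F$, I may first translate so that $0$ lies in the omitted compact set $K=\C\sm F(\Outer)$; then $F$ omits the value $0$, and $\phi(w):=1/F(1/w)$ is holomorphic and univalent on $\D\sm\{0\}$ and bounded near the origin, hence extends across $0$ with $\phi(0)=0$ and $\phi'(0)=1$, so $\phi\in S$. A direct computation gives the key identity
\[
|F'(\zeta)| = \frac{r}{|\phi(w)|}\,\left|\frac{w\phi'(w)}{\phi(w)}\right|, \qquad w=1/\zeta,\ |w|=r.
\]
Now the growth theorem for $S$ gives $(1-r)^2\le r/|\phi(w)|\le(1+r)^2$, and the sharp bound on the logarithmic derivative (a standard consequence of the distortion theorem, via $|w\phi'/\phi-(1+r^2)/(1-r^2)|\le 2r/(1-r^2)$) gives $\tfrac{1-r}{1+r}\le|w\phi'(w)/\phi(w)|\le\tfrac{1+r}{1-r}$. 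Multiplying yields
\[
\frac{(1-r)^3}{1+r}\le|F'(\zeta)|\le\frac{(1+r)^3}{1-r},
\]
and since $1-r\ge(1-r)^3$ and $(1+r)^3\ge 1+r$ these are contained in the claimed bounds $\bigl(\tfrac{1-r}{1+r}\bigr)^3\le|F'(\zeta)|\le\bigl(\tfrac{1+r}{1-r}\bigr)^3$.

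For the growth estimate I would use the area theorem, which for the Laurent expansion $F(\zeta)=\zeta+\sum_{n\ge1}\beta_n\zeta^{-n}$ (there is no constant term under the normalization at infinity) gives $\sum_{n\ge1}n|\beta_n|^2\le1$ and in particular $|\beta_n|\le1$. For $|\zeta|\ge2$ this bounds $|F(\zeta)-\zeta|\le\sum_{n\ge1}|\zeta|^{-n}\le1$. For $1<|\zeta|<2$ I would instead use that $K$ has logarithmic capacity $1$, hence diameter at most $4$, so that both $F(\zeta)\in\C\sm K$ (which lies within bounded distance of the origin once $K$ is centred by the normalization) and $\zeta$ are bounded; together these give $F(\zeta)-\zeta=O(1)$, i.e.\ $f(z)-z=O(\rho)$.

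The main obstacle is obtaining the clean cube-form bound on $|F'|$: combining the growth and distortion theorems for $\phi$ in the naive way produces only $|F'(\zeta)|\le(1+r)^5/(1-r)^3$, which is weaker than what is claimed. The point is to pair the growth theorem not with the distortion theorem directly but with the sharp estimate on the logarithmic derivative $w\phi'/\phi$; this is the one place where a little care beyond quoting the two headline theorems is required. The remaining subtleties --- arranging $0\in K$ by a translation (harmless since $F'$ is translation-invariant) so that the inversion $\phi$ lands in $S$, and keeping track of the normalization so that the constant term vanishes in the growth estimate --- are routine.
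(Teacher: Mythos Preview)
The paper does not give a proof here; it simply cites \cite{Lconf}, Theorem~3.21 and Proposition~3.30. Your reduction to the class $S$ via the inversion $\phi(w)=1/F(1/w)$ is the standard route, and your derivative estimate is correct---indeed you obtain the sharper bound $(1-r)^3/(1+r)\le|F'|\le(1+r)^3/(1-r)$, from which the stated cube bound follows.

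There is, however, a real gap in the growth estimate. Your assertion that ``there is no constant term under the normalization at infinity'' is false: the hypotheses $f(\infty)=\infty$, $f'(\infty)=1$ say only that the leading Laurent coefficient is $1$ and place no constraint on $b_0$. The translation $f(z)=z+M$ satisfies all the hypotheses for every $\rho>0$, yet $f(z)-z=M$ is not $O(\rho)$ with a universal constant. So the growth statement is actually false as written; it becomes true if one adds the hypothesis $b_0=0$ (the class~$\Sigma'$), or---what is implicitly the case throughout the paper---that the set omitted by $f$ contains the origin, which via Bieberbach's bound $|a_2|\le2$ for the associated $\phi\in S$ forces $|b_0|\le 2\rho$. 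You should flag this missing hypothesis rather than claim it follows from the normalization. Separately, your treatment of $1<|\zeta|<2$ is too loose: writing ``$F(\zeta)\in\C\sm K$, which lies within bounded distance of the origin'' does not work, since $\C\sm K$ is unbounded. What you need there is again the growth theorem for $\phi$ after translating by a point $a\in K$ (with $|a|\le2$ once $b_0=0$), giving $|F(\zeta)-a|\le(|\zeta|+1)^2/|\zeta|$ and hence $|F(\zeta)-\zeta|=O(1)$ on that range.
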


We will use the fact that~$|f(z)/z|$ and~$|f'(z)|$ are
both~$1+O(r)$ as~$r\to0$, uniformly in~$f$.
By the Koebe 1/4--theorem, this is as true for~$f\inv$ as
for~$f$.

\begin{definition}
\label{hulldef}
A \emph{hull} is a compact, connected set $K \subset \C$
larger than a single point.
We denote by $g_K$ the unique conformal map $g_K:\C\sm
K\to\Outer_t$, for some $t>0$, with $g_K(\infty)=\infty$
and $g_K'(\infty)=1$.
The \emph{capacity} of~$K$ is defined by $\ccap K =\log t$.
\end{definition}
\begin{remark}
\label{radius-bound}
If $0\in K$ and $\ccap K=\log t$, then the \emph{radius} of
$K$, $\rad K:=\max\{|k|:k\in K\}$, lies in $[t,4t]$ by the
Schwarz lemma and the Koebe 1/4--theorem.
In particular, $|g_K(z)/z|$ and $|g_K'(z)|$ are $1+O(t/|z|)$ as
$t/|z|\to0$.
\end{remark}

\subsubsection{Conformal annuli}  \label{annulisec}

In this section we let $\delta_t = 1/\log(1/t)$, let
$\domain$ denote the set of simply connected domains $D$
containing the origin with $\dist(0, \p D) = 1$, and let
$\hulls_t$ denote the set of hulls $K \subset \Disk$
 of capacity $\log t$ containing the origin.

If $D \in \domain$, let $\psi = \psi_D:
D \rightarrow \Disk$ denote the unique conformal
transformation with $\psi(0) = 0$ and $\psi'(0) > 0$.
If $D \in{\domain}$ and $K \in \hulls_t$, let $\phi$ denote
a conformal transformation $\phi = \phi_{D,K}:
D \setminus K \rightarrow A_{s,1}$.
It is well known that this $\phi$ is defined uniquely up to
a final rotation, and in particular, $s = s_{D,K}$ is a
uniquely defined number reflecting the conformal type of
the conformal annulus $D\sm K$.
We recall the classical fact that nested conformal annuli
have nested values 
of $s$.

\begin{lemma}
\label{ratiocomp}
In this situation, $s\asymp t\asymp \rad K$.
In particular, the expressions $O(s)$ and $O(t)$ are
interchangeable, as are ``\/$s\to0$'' and ``\/$t\to0$''.
\end{lemma}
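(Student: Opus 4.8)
The plan is to establish the three asymptotic relations $s \asymp t \asymp \rad K$ using the monotonicity of conformal modulus under inclusion of annuli, together with the capacity–radius estimate from Remark~\ref{radius-bound}. The relation $t \asymp \rad K$ is immediate: since $K \in \hulls_t$ has capacity $\log t$ and $0 \in K$, Remark~\ref{radius-bound} gives $\rad K \in [t, 4t]$, hence $t \le \rad K \le 4t$, which is exactly $t \asymp \rad K$ with universal constants. So the substance of the lemma is the comparison $s \asymp t$.

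For this, I would use the classical nesting of conformal annuli stated in the paragraph preceding the lemma. Recall $D \in \domain$ means $D$ is simply connected, $0 \in D$, and $\dist(0,\p D) = 1$; and $K \in \hulls_t$ means $K \subset \Disk$ is a hull of capacity $\log t$ with $0 \in K$. The key geometric observation is a sandwiching of the conformal annulus $D \sm K$ between two round annuli. On the inside, since $\rad K \le 4t$, we have $K \subset \ol\Disk_{4t}$ (assuming $t$ small, so $4t < 1$), and since $\dist(0,\p D) = 1$ we have $\Disk \subset D$; hence $A_{4t, 1} = \Disk \sm \ol\Disk_{4t} \subset D \sm K$. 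On the outside, since $K \supset \{0\}$ and $\ccap K = \log t$, the Schwarz lemma direction of Remark~\ref{radius-bound} gives $\rad K \ge t$, so $K \not\subset \Disk_t$; more usefully, $g_K$ maps $\C \sm K$ onto $\Outer_t$, and one checks that $D \sm K$ is conformally a subannulus of $\C \sm K \cong \Outer_t$ cut down by the image of $\p D$. Since $\p D$ lies outside $\Disk$ (as $\dist(0,\p D)=1$ means $\p D$ touches but does not enter $\Disk$) but within bounded distance — actually $D$ can be large, so here I need to be more careful and instead use that $D \subset$ some disk only if $D$ is bounded. The cleaner route: map by $\psi_D : D \to \Disk$; then $\psi_D(0) = 0$, and by the Koebe and Schwarz estimates $\psi_D'(0) \asymp 1$ because $\dist(0,\p D) = 1$ forces $\psi_D'(0) \in [1/4, 1]$ (Koebe 1/4 gives $\psi_D'(0) \le 1$ from $\Disk \subset D$ composed appropriately, and the distortion bound on $\psi_D\inv$ gives the lower bound). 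Then $\psi_D(K)$ is a hull in $\Disk$ whose capacity differs from $\log t$ by $O(1)$ — in fact by the conformal covariance of capacity under a map that is $\asymp$ the identity near $0$, $\ccap \psi_D(K) = \log t + O(1)$. So $\rad \psi_D(K) \asymp t$ as well, and $D \sm K$ is conformally $\Disk \sm \psi_D(K)$ with $\psi_D(K) \subset \Disk$ a hull of radius $\asymp t$.

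Thus I have reduced to: if $L \subset \Disk$ is a hull with $0 \in L$ and $\rad L \asymp t$, then the modulus $s_L$ of the annulus $\Disk \sm L$ satisfies $s_L \asymp t$. By monotonicity of modulus, $\Disk_t \sm \{0\}$ scaled — more precisely, $L \subset \ol\Disk_{ct}$ for a universal $c$ so $\Disk \sm L \supset A_{ct,1}$, giving $s_L \le ct$ (since the round annulus $A_{\rho,1}$ has "$s$"-modulus exactly $\rho$ in the normalization used here, where $\phi: D \sm K \to A_{s,1}$); and $L \supset \{0\}$ with $\rad L \ge c't$ means $L \not\subset \Disk_{c't}$, so $\Disk \sm L \subset \Disk \sm (\text{a point on } C_{c't})$, hence by monotonicity $s_L \ge$ a universal multiple of $c' t$ — here using that a single radial slit to radius $c't$ still leaves an annulus of modulus comparable to $c't$. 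This last comparison (annulus with a slit versus round annulus) is the one place a genuine estimate is needed, but it is classical: the extremal length of the family of curves separating a small slit from the unit circle is $\asymp t$, by comparison with explicit test functions (e.g. $\rho(z) = 1/(|z|\log(1/t))$ truncated). I expect this slit-annulus comparison to be the main — and only real — obstacle; everything else is bookkeeping with Koebe/distortion estimates already quoted as Proposition~\ref{growthdist} and Remark~\ref{radius-bound}. Finally, the interchangeability statements ($O(s) \leftrightarrow O(t)$ and $s \to 0 \Leftrightarrow t \to 0$) follow formally from $s \asymp t$.
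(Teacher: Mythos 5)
Your proof is correct and shares most of its skeleton with the paper's: both obtain $t\asymp\rad K$ directly from Remark~\ref{radius-bound}, both get $s=O(t)$ from the inclusion of a round annulus $A_{\rad K,1}\subset D\sm K$ and monotonicity of the modulus, and both reduce the remaining inequality $t=O(s)$ to the case $D=\D$ via $\psi_D$ and Koebe/Schwarz. The genuine divergence is in that last step. The paper simply applies $g_K$ to $\D\sm K$ and reads off from Proposition~\ref{growthdist} that the image sits inside $A_{t,O(1)}$, whence $t=O(s)$ by monotonicity again --- one line, using only machinery already quoted. You instead invoke a Gr\"otzsch-type extremal estimate: a continuum containing $0$ and reaching radius $\rho$ inside $\D$ forces the complementary annulus to have modulus at most $\tfrac{1}{2\pi}\log(4/\rho)$, i.e.\ $s\gtrsim\rho$. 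That fact is true and classical, so your argument does close, but note two imprecisions in how you justify it. The phrase ``$\D\sm L\subset\D\sm(\text{a point on }C_{c't})$'' buys nothing (a punctured disk has infinite modulus), and the comparison you actually need is the \emph{extremal} property of the radial slit, not a set inclusion, since $\D\sm L$ is not contained in $\D\sm[0,c't]$ in general. Likewise the test metric $1/(|z|\log(1/t))$ controls curves that cross the round annulus $A_{\rho,1}$, but curves separating $L$ from $C_1$ may hug the slit and never enter that annulus, so the sketch does not by itself prove the slit estimate; one needs circular symmetrization or the explicit Gr\"otzsch modulus --- or, most economically, exactly the paper's move of applying $g_L$ and Proposition~\ref{growthdist}. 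In short: your route is valid but outsources the key lower bound to a classical theorem, where the paper derives it in one line from the growth theorem it has already stated.
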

\begin{proof}
Let $r=\rad K$.
Remark~\ref{radius-bound} provides the bound $t\asymp r$.

Since $A_{r,1}\subset D\sm K$, $s\le r$.
Applying $\psi$ and using the Koebe $1/4$-theorem, it
suffices to prove that $t=O(s)$ in the case $D=\D$.
Now $g_K(\D\sm K)\subset A_{t,1+O(t)}\subset A_{t,O(1)}$ by
Proposition~\ref{growthdist}, and thus $t=O(s)$.
\maybeqed\end{proof}

\begin{lemma}
\label{exc-est}
Let $K\in\hulls_t$, $s=s_{\D,K}$, $0<r\le|z|<1$, $|w|=1$ and $t\to0$.
Then
\[
\begin{aligned}
s &= t\,[1+O(t)],
\\
h_{\D\sm K}(z,K) &= \delta_t\,\log|1/z|\,[1+O_r(t)],
\\
\exc_{\D\sm K}(w,K)
&= \delta_t  \,[1+ O(t)].
\end{aligned}
\]
\end{lemma}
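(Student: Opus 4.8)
The plan is to reduce everything to the model case $D=\D$ and compare $\D\sm K$ with the annulus $A_{t,1}$ via the map $g_K$, using Proposition~\ref{growthdist} to control the distortion. First I would establish $s = t\,[1+O(t)]$. Since $g_K:\D\sm K\to\Outer_t$ is conformal with $g_K'(\infty)=1$, Proposition~\ref{growthdist} (applied to the univalent function $z\mapsto g_K(z)$ on $\Outer_r$, $r=\rad K\asymp t$ by Remark~\ref{radius-bound} and Lemma~\ref{ratiocomp}) shows that $g_K$ maps $C_1=\p\D$ to a curve lying in the annulus $\{t\le|\zeta|\le 1+O(t)\}$; after composing with $\zeta\mapsto\zeta/(1+O(t))$ we get a conformal map of $\D\sm K$ onto $A_{s',1}$ with $s' = t\,[1+O(t)]$. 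Since the conformal modulus of $\D\sm K$ is determined by its conformal type, $s = s'= t\,[1+O(t)]$; alternatively one can cite the classical monotonicity of $s$ under nested annuli together with the sandwich $A_{t,1+O(t)}\supset g_K(\D\sm K)\supset A_{t,?}$ to pin down $s$ to within a multiplicative $1+O(t)$, then bootstrap to the additive-error form. This first identity then feeds into the other two.

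Next, for the Poisson-kernel estimate, I would use the conformal invariance of harmonic measure: $h_{\D\sm K}(z,K) = h_{A_{s,1}}(\phi(z), C_s)$ where $\phi=\phi_{\D,K}$. The harmonic measure of the inner circle of the round annulus $A_{s,1}$ seen from radius $|\phi(z)|$ is exactly $\log(1/|\phi(z)|)/\log(1/s)$, a standard one-dimensional computation (harmonic measure in an annulus is a linear function of $\log|\cdot|$). So it remains to show $\log(1/|\phi(z)|) = \log(1/|z|)\,[1+O_r(t)]$ and $1/\log(1/s) = \delta_t\,[1+O(t)]$. The latter is immediate from $s=t[1+O(t)]$, taking $\log$ and then reciprocals. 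For the former, $\phi = \rho\circ g_K$ where $\rho$ is the rescaling by $1/(1+O(t))$ absorbing the outer-radius discrepancy; by Proposition~\ref{growthdist} and Remark~\ref{radius-bound}, $|g_K(z)/z| = 1+O(t/|z|) = 1+O_r(t)$ uniformly on $|z|\ge r$, so $|\phi(z)| = |z|\,[1+O_r(t)]$, whence $\log(1/|\phi(z)|) = \log(1/|z|) + O_r(t) = \log(1/|z|)\,[1+O_r(t)]$ on the range $r\le|z|<1$ bounded away from $1$ — and near $|z|=1$ one uses instead that both sides are $O_r(1)$ and the additive error $O_r(t)$ still gives a multiplicative $1+O_r(t)$ once combined with the $\delta_t$ prefactor. (One must be slightly careful that $\log(1/|z|)$ can vanish at $|z|=1$; but $|w|=1$ is excluded in this line, and for $|z|$ close to $1$ the claimed estimate $h_{\D\sm K}(z,K)=\delta_t\log(1/z)[1+O_r(t)]$ is a statement about quantities of size $O(\delta_t)$, and the additive error in the logarithm is $O_r(t)=o(\delta_t)$ times bounded factors, so it is absorbed.)

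Finally, the excursion-measure estimate is obtained by differentiating the Poisson-kernel estimate in the inward normal direction at a boundary point $w\in C_1$. Concretely, $\exc_{\D\sm K}(w,K) = \p_{\bf n} h_{\D\sm K}(z,K)\big|_{z=w}$, and using conformal covariance of the excursion measure, $\exc_{\D\sm K}(w,K) = |\phi'(w)|\,\exc_{A_{s,1}}(\phi(w),C_s)$. For the round annulus, $\exc_{A_{s,1}}(\zeta, C_s) = \p_{\bf n}[\log(1/|\zeta|)/\log(1/s)]$ at $|\zeta|=1$, which equals $1/\log(1/s) = \delta_t[1+O(t)]$ (the inward normal derivative of $-\log|\zeta|$ at $|\zeta|=1$ is $1$). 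So it remains to show $|\phi'(w)| = 1+O(t)$ for $|w|=1$. Again $\phi = \rho\circ g_K$ with $\rho$ a dilation by $1+O(t)$, and Proposition~\ref{growthdist} gives $|g_K'(w)| = 1+O(t)$ for $|w|=1$ (here $r=\rho/|z|$ in that proposition is $\asymp t$ since $|w|=1$ and $\rad K\asymp t$), so $|\phi'(w)| = (1+O(t))(1+O(t)) = 1+O(t)$, uniformly in $w\in C_1$, giving the stated $O(t)$ (not $O_r(t)$) since $|w|=1$ is bounded away from $K$ independently of any parameter. Multiplying the two $1+O(t)$ factors yields $\exc_{\D\sm K}(w,K) = \delta_t[1+O(t)]$.

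The main obstacle I anticipate is the bookkeeping around $|z|$ close to $1$ in the second estimate: there $\log(1/|z|)\to 0$, so writing the error as a multiplicative $[1+O_r(t)]$ requires care that the additive distortion error in $\log(1/|\phi(z)|)$ does not swamp the main term. The clean way around this is to keep the estimate in additive form, $h_{\D\sm K}(z,K) = \delta_t\log(1/|z|) + O_r(\delta_t t)$, observe $O_r(\delta_t t) = O_r(t)\cdot\delta_t\log(1/|z|)$ fails only when $\log(1/|z|)$ is itself of order $t$ or smaller, and in that regime bound $h_{\D\sm K}(z,K)$ directly by comparison with harmonic measure in a slightly larger annulus to see it too is $O(\delta_t t)$, so the displayed multiplicative form still holds. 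Everything else is routine application of the growth/distortion theorem and the explicit harmonic functions on round annuli.
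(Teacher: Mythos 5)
Your overall route---map by $g_K$, use Proposition~\ref{growthdist} to sandwich $g_K(\D\sm K)$ between round annuli $A_{t,1\pm O(t)}$, and read off everything from the explicit harmonic measure $\log(1/|\zeta|)/\log(1/s)$---is the same as the paper's. But the patch you propose for $|z|$ near $1$ does not close the gap you correctly identified. Knowing that both $h_{\D\sm K}(z,K)$ and $\delta_t\log(1/|z|)$ are $O(\delta_t t)$ when $\log(1/|z|)=O(t)$ only yields $h_{\D\sm K}(z,K)=\delta_t\log(1/|z|)\,[1+O(1)]$, whereas the lemma asserts a relative error of $O_r(t)$, i.e.\ an absolute error of order $\delta_t t^2$ in that regime. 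The paper's fix is the reduction you skipped: since $K\subset\overline\D_{4t}\subset\D_r$ for small $t$, a Brownian motion from $z$ must hit $C_r$ before $K$, and $h_{A_{r,1}}(z,C_r)=\log(1/|z|)/\log(1/r)$ \emph{exactly}; so once one proves $h_{\D\sm K}(\cdot,K)=\delta_t\log(1/r)[1+O_r(t)]$ uniformly on $C_r$ (where $\log(1/|z|)=\log(1/r)$ is bounded below by an $r$-dependent constant and the additive distortion error is harmlessly absorbed), the strong Markov property propagates the multiplicative $[1+O_r(t)]$ to all $r\le|z|<1$ with no loss.

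A second, smaller issue: you write $\phi=\rho\circ g_K$ with $\rho$ ``a dilation by $1+O(t)$,'' but $g_K(\D\sm K)$ is not a round annulus, so $\rho$ is a genuine conformal map whose boundary derivative is not controlled by Proposition~\ref{growthdist}; and you cannot invoke Proposition~\ref{outerboundary} here without circularity, since its proof uses the excursion estimate of this very lemma. The excursion bound is better obtained without $\phi'$: write $\exc_{\D\sm K}(w,K)=\int_{C_{1/2}}h_{\p A_{1/2,1}}(w,\zeta)\,h_{\D\sm K}(\zeta,K)\,|d\zeta|$ via the same first-passage decomposition, apply the uniform estimate on $C_{1/2}$, and use $\exc_{A_{1/2,1}}(w,C_{1/2})=1/\log 2$; fixing $r=1/2$ is also what turns the $O_r(t)$ into the claimed universal $O(t)$.
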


\begin{proof}
Since
\[
h_{A_{r,1}}(z,C_r)=\frac{\log|1/z|}{\log(1/r)},
\]
it suffices to consider $|z|=r$.
By conformal invariance of Brownian motion,
\[
h_{\D\sm K}(z,K)=h_{g_K(\D\sm K)}(g_K(z),C_t).
\]
By Proposition~\ref{growthdist}, $|g_K(y)|=1+O(t)$ for
$y\in C_1$, and $|g_K(z)|=r\,[1+O_r(t)]$.
It follows that
\[
A_{t,1-O(t)}\subset g_K(\D\sm K)\subset A_{t,1+O(t)}.
\]
We conclude that $s = t \, [1+O(t)]$ and
\[
h_{\D\sm K}(z,K)
=\frac{\log(1/r\,[1+O_r(t)])}{\log(1/t\,[1+O(t)])}
=\delta_t\,\log(1/r)\,[1+O_r(t)].
\]
The remaining estimates follow.
\maybeqed\end{proof}

We define an \emph{inner transformation} of a conformal
annulus $A\subset\C$ to be a conformal map of $A$ fixing
and preserving the orientation of one of its boundary
components, which we call the \emph{outer boundary}.

\begin{proposition}
\label{outerboundary}
Let $A$ be a conformal annulus of conformal type $s$, 
and $s\to0$.
Then for any inner transformation $\chi$ of $A$,
$|\chi'|=1+O(s)$ on $A$'s outer boundary.
\end{proposition}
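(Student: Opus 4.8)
The plan is to normalize and reduce to a concrete model annulus, then apply the distortion bounds already recorded in Proposition~\ref{growthdist}. By composing with conformal maps we may assume $A = A_{s,1}$ with outer boundary $C_1$; this is legitimate because ``conformal type $s$'' means precisely that $A$ is conformally equivalent to $A_{s,1}$, and an inner transformation of $A$ corresponds, under this identification, to an inner transformation of $A_{s,1}$ (the map fixing and preserving orientation of $C_1$). So it suffices to show: if $\chi : A_{s,1} \to A_{s',1}$ is conformal with $\chi(C_1) = C_1$ orientation-preserving, then $|\chi'| = 1 + O(s)$ on $C_1$ as $s \to 0$.

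The first key step is to observe that by uniqueness of conformal maps between annuli, $\chi$ extends to a conformal map of a slightly larger annulus. Indeed, $\chi$ maps $A_{s,1}$ to some $A_{s',1}$; by the reflection principle across $C_1$ (reflecting $z \mapsto 1/\bar z$ on both sides), $\chi$ extends to a conformal (univalent) map of $A_{s, 1/s}$, fixing $C_1$. Now I would like to control $s'$: by Lemma~\ref{ratiocomp}-type reasoning, or directly, nested annuli have nested moduli, and since $\chi$ is a bijection one gets $s' \asymp s$; in fact one can do better, but $s' = O(s)$ and $s = O(s')$ is all that is needed. Thus $\chi$ is univalent on an annulus containing $C_1$ whose inner radius is $O(s)$ and whose outer radius is $\gtrsim 1/s$.

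The second key step is to apply the growth and distortion theorem. The extended $\chi$ is univalent on $\Outer_{O(s)} \cap \D_{1/O(s)}$, say, and fixes $C_1$ setwise. To put it in the form of Proposition~\ref{growthdist} I would invert: consider $z \mapsto 1/\chi(1/z)$ or simply work with $g_{K}$-type normalizations, arranging a univalent function on $\Outer_\rho$ with $\rho = O(s)$, derivative $1$ at $\infty$, after dividing by the appropriate constant (which is $1 + O(s)$ since $\chi$ nearly preserves $C_1$). On $C_1$ we have $r = \rho/|z| = O(s)$, so Proposition~\ref{growthdist} gives $|\chi'(z)| = \bigl(1 + O(s)\bigr)^{\pm 3} = 1 + O(s)$, using the remark following that proposition that the same bound applies to the inverse. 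This handles the inner-radius side; the outer-radius side ($|z| \lesssim 1/s$) contributes only harmlessly since $C_1$ sits deep inside, and one uses the version of the distortion theorem for maps univalent on a disk (Koebe) symmetrically, or simply the fact that $\chi$ composed with inversion at $0$ falls under the same statement.

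The main obstacle I anticipate is the bookkeeping around the normalizing constant: $\chi$ fixes $C_1$ as a set but is only a rotation-composed-with-near-identity there, and to invoke Proposition~\ref{growthdist} cleanly one must first peel off a rotation (harmless, $|{\rm rotation}'| = 1$) and a scaling factor, and verify that this scaling factor is $1 + O(s)$ rather than merely $O(1)$. This follows from the reflection-extension picture together with $s' \asymp s$: the extended map sends $C_1$ to $C_1$ and hence its modulus-$1$ image is pinned, forcing the multiplicative constant close to $1$ at rate $O(s)$. Once that normalization is pinned down, the distortion estimate is immediate, so the whole argument is short.
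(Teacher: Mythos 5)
Your opening reduction is where the argument breaks. Conjugating by a uniformizing map $\phi:A\to A_{s,1}$ does \emph{not} preserve $|\chi'|$ on the outer boundary: if $\chi:A\to A'$ and $\phi':A'\to A_{s',1}$ is the uniformizer of the image, the chain rule gives
\[
|\chi'(z)| \;=\; \frac{|\phi'(z)|}{|(\phi')'(\chi(z))|}\,\bigl|\tilde\chi'(\phi(z))\bigr|,
\qquad \tilde\chi=\phi'\circ\chi\circ\phi^{-1},
\]
and the two uniformizer derivatives are taken at \emph{different} boundary points of \emph{different} annuli, so they do not cancel. Estimating exactly such boundary derivatives of uniformizing maps is the content of the proposition, so the reduction assumes what is to be proved. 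A clear symptom that something has been lost: your reduced statement concerns a conformal bijection $A_{s,1}\to A_{s',1}$ fixing $C_1$ with orientation, which by conformal invariance of the modulus forces $s'=s$ and by the classification of self-maps of round annuli forces $\chi$ to be a rotation, so $|\chi'|\equiv1$ exactly. That is strictly stronger than the claimed $1+O(s)$ and is false for a genuine inner transformation of, say, $\D\setminus K$ with $K$ an off-center hull; the discrepancy shows the conjugation discarded the nontrivial part of the problem. There is a secondary gap as well: even for the unreduced problem, Schwarz reflection across $C_1$ only makes $\chi$ univalent on an annulus $A_{O(s),\,1/O(s)}$, not on a full exterior domain $\Outer_{O(s)}$, so Proposition~\ref{growthdist} does not apply as stated; a quantitative rigidity statement for univalent maps of round annuli preserving $C_1$ would be needed, and you have not supplied one.

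The paper's proof keeps $A=\D\sm K$ general (only normalizing the outer boundary to be $\p\D$, a reduction that is harmless because the chain-rule correction there involves only rotations and circle-preserving normalizations) and then avoids distortion theorems entirely: writing $L(z)=\log\chi(e^{iz})=u+iv$ near $0$ by Schwarz reflection, it uses the harmonic-measure identity $\log|\chi(z)|/\log s=h_{\D\sm K}(z,K)$ together with the excursion estimate of Lemma~\ref{exc-est} to get $-\p_yu(0)=\log(1/s)\,\exc_{\D\sm K}(1,K)=1+O(t)$, whence $L'(0)=i+O(t)$ and $|\chi'(1)|=1+O(t)$. If you want to repair your argument, you would need either to adopt that harmonic-measure computation or to prove the annular rigidity lemma (e.g.\ via the Laurent expansion and the area theorem) and apply it to the reflected map on $A_{O(s),1/O(s)}$ — but in either case the reduction to $A=A_{s,1}$ must be abandoned.
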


\begin{proof}
By a conformal map, we may assume that $A\subset\D$ with
outer boundary $\p\D$.
Moreover, it suffices to consider $\chi:A\to A_{s,1}$ and
to estimate the derivative at $1$.
Let $K:=\D\sm A$ have capacity $\log t$, and let $L(z)
= \log \chi(e^{iz}) = u(z) + i v(z)$, defined in a
neighborhood of the origin using Schwarz reflection.
Since $u$ vanishes on the real axis, $\p_xu(0) = 0$.
Since
\[
\frac{\log |\chi(z)|} 
   {\log s} = h_{A_{s,1}}(\chi(z),C_s) 
   = h_{\D \setminus K}(z,K) ,
\]
we see that
\[  
-\p_yu(0) = \log (1/s) \, \exc_{\D \setminus K}(1,K)
 = \frac{\delta_ t}{\delta_ s} \, [1 + O(t)] = 1 + O(t) .
\]
Therefore, $L'(0) = i + O(t)$.
By the chain rule, $|\chi'(1)| = 1 + O(t).$
\maybeqed\end{proof}

The following estimate allows us to extend
Lemma~\ref{exc-est} to general domains $D\in\domain$.

\begin{lemma}
\label{NL}
Let $K\in\hulls_t$.
Let $f$ be univalent on $\D$.
Let $t\to0$.
Then
\[
\ccap f(K) = \log(|f'(0)|\,t) + O(t).
\]
\end{lemma}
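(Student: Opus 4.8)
The plan is to use the classical identification, for a continuum $E\subset\C$, of $e^{\ccap E}$ with the transfinite diameter (equivalently logarithmic capacity) of $E$, together with the fact that a suitably normalized univalent function is uniformly close to its linear part on a small disk. Note first that $f(K)$ is again a hull, and that by Remark~\ref{radius-bound} together with $0\in K\subset\D$ we have $K\subset\ol\D_r$ with $r=\rad K\asymp t$; hence $O(r)$ and $O(t)$ are interchangeable, and $r\le\tfrac12$ once $t$ is small.

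First I would reduce to a normalized $f$. Post-composing $f$ with the affine map $w\mapsto(w-f(0))/f'(0)$ multiplies all pairwise distances by $1/|f'(0)|$, hence multiplies the transfinite diameter by $1/|f'(0)|$ and subtracts $\log|f'(0)|$ from the capacity; so it is enough to show that if $f$ is univalent on $\D$ with $f(0)=0$ and $f'(0)=1$ then $\ccap f(K)=\log t+O(t)$, the general statement following by adding back $\log|f'(0)|$.

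For such $f$, write $f(z)=z+\sum_{k\ge2}a_kz^k$. Using $z^k-w^k=(z-w)\sum_{j=0}^{k-1}z^jw^{k-1-j}$ and the elementary bound $|a_k|=O(k)$ for univalent coefficients (any polynomial bound suffices), one gets, uniformly in $f$ and in $z,w\in\ol\D_r$,
\[
f(z)-f(w)=(z-w)\Bigl(1+\sum_{k\ge2}a_k\bigl(z^{k-1}+\cdots+w^{k-1}\bigr)\Bigr)=(z-w)\,\bigl(1+O(r)\bigr),
\]
so that $|f(z)-f(w)|=|z-w|\,(1+O(r))$ for all $z,w\in K$. Then for each $n$ and each configuration $z_1,\dots,z_n\in K$ the products $\prod_{i<j}|f(z_i)-f(z_j)|$ and $\prod_{i<j}|z_i-z_j|$ differ by a factor in $[e^{-Cr\binom n2},e^{Cr\binom n2}]$ (there are $\binom n2$ pairs, each ratio in $[e^{-Cr},e^{Cr}]$); taking the maximum over configurations, extracting $\binom n2$-th roots, and letting $n\to\infty$ gives $d(f(K))=(1+O(r))\,d(K)=(1+O(r))\,t$, whence
\[
\ccap f(K)=\log d(f(K))=\log t+O(r)=\log t+O(t),
\]
and adding $\log|f'(0)|$ yields $\ccap f(K)=\log(|f'(0)|\,t)+O(t)$.

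The one delicate point is \emph{uniformity in $f$}: the implicit constant in the bi-Lipschitz bound on $\ol\D_r$ must not secretly depend on $f$, which is precisely why one first normalizes, so that $f$ becomes a bounded-distortion perturbation of the identity there with constants independent of $f$; everything else is routine bookkeeping. One could also argue more in the spirit of Section~\ref{annulisec}: $f$ maps $\D\sm K$ conformally onto $f(\D)\sm f(K)$, so the conformal annuli $g_K(\D\sm K)$ and $g_{f(K)}(f(\D)\sm f(K))$ have equal modulus, and one reads $e^{\ccap f(K)}$ off the outer-boundary geometry — but that route needs a separate stability estimate for the conformal radius of the outer region under an $O(t)$ perturbation of its boundary, which is essentially the same analytic fact in another guise.
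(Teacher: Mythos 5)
Your proof is correct, but it takes a genuinely different route from the paper's. You identify $e^{\ccap f(K)}$ with the transfinite diameter of $f(K)$ (Fekete--Szeg\H{o}), normalize $f$ affinely so that $f(0)=0$, $f'(0)=1$, and then use coefficient bounds for the class of normalized univalent functions to show $f$ is $(1+O(t))$-bi-Lipschitz on $\ol\D_{O(t)}\supset K$, which distorts every Vandermonde product $\prod_{i<j}|z_i-z_j|$ by a factor $e^{O(t)\binom n2}$ and hence the transfinite diameter by $1+O(t)$. Each step is sound: the affine reduction is exactly the covariance $\ccap(aE+b)=\log|a|+\ccap E$; the bound $|a_k|=O(k)$ (or any polynomial bound, available elementarily) makes the series $\sum_k a_k(z^{k-1}+\cdots+w^{k-1})$ an $O(r)$ perturbation uniformly over the normalized class, which is precisely the uniformity in $f$ that the lemma needs; and the passage from configurations on $K$ to configurations on $f(K)$ is a bijection since $f$ is injective. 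The paper instead works from the probabilistic representation $\ccap K=\E^\infty[\log|B_\sigma|]$ and runs a strong-Markov bootstrap comparing hitting functionals of $K$ and $f(K)$, absorbing the error via an inequality of the form $u^*\le O(t)+O(\delta_t)\,u^*$. Your argument is shorter, purely potential-theoretic, and independent of the Brownian-motion estimates of Sect.~2.1, at the cost of invoking the transfinite-diameter characterization of capacity, which the paper does not otherwise use; the paper's proof stays within the probabilistic toolkit it has already set up (and reuses Proposition~\ref{growthdist} and harmonic-function derivative estimates that appear elsewhere). Either proof establishes the stated estimate with the required uniformity.
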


\begin{proof}
By a similarity we may assume that $f(0)=0$ and $f'(0)=1$.
By the growth theorem, $f(z)=z\,[1+O(z)]$ as $z\to0$, and hence
$\rad f(K) = [1+O(t)]\, \rad K$.
By Remark~\ref{radius-bound}, $\ccap f(K)
= \ccap K + O(1)$.

Let $B_s$ be a complex Brownian motion.
When we ask to start $B_s$ at infinity, we mean to let the
starting point be distributed uniformly on a circle of
sufficiently large radius.

Let $k$ be a constant to be determined.  Let
\begin{align*}
\sigma&=\inf\{s:B_s\in K\}, &
\tilde\sigma&=\inf\{s:B_s\in f(K)\}, \\
X &= \log |kt/f(B_\sigma)|, &
\tilde X &= \log |kt/B_{\tilde\sigma}|, \\
X_0 &= \E^\infty[X], &
\tilde X_0 &= \E^\infty[\tilde X].
\end{align*}
We will use the fact that $\ccap K = \E^\infty[\log
|B_\sigma|]$ and $\ccap f(K) = \E^\infty[\log
|B_{\tilde\sigma}|]$ (for a proof, see~\cite{Lconf},
Corollary~3.34).
Since $\log|B_\sigma|=\log|f(B_\sigma)|+O(t)$, it suffices
to prove that $X_0 = \tilde X_0 + O(t)$.
We already know that $X_0=\tilde X_0+O(1)$.

Let
\begin{align*}
\tau&=\inf\{s:B_s\in C_{kt}\}, &
\tilde\tau&=\inf\{s:B_s\in f(C_{kt})\}, \\
\rho&=\inf\{s>\tau:B_s\in C_{1/2}\}, &
\tilde\rho&=\inf\{s>\tilde\tau:B_s\in f(C_{1/2})\}, \\
V&=\{\rho<\sigma\}, &
\tilde V &= \{\tilde\rho<\tilde\sigma\}.
\end{align*}
By Proposition~\ref{growthdist} applied to $g_K$, we may
choose the universal constant $k>8$ so that
$\Prob^z(V) \asymp \delta_t$ for $z\in C_{kt}$, and we may
also insist that $f(K)\subseteq C_{kt/2}$.
This implies that $\tilde X_0 \asymp 1$ and hence
\begin{equation}
\label{theta1}
\E^z[\tilde X] \asymp 1, \qquad 3kt/4 < |z| < 5kt/4,
\end{equation}
by Harnack's inequality.
Let
\[
u(z) = \E^z[X]-\tilde X_0, \qquad
u^* = \max_{z\in C_{1/2}} |u(z)|,
\]
which exists since $u$ is harmonic off $K$.

Let $z\in C_{1/2}$.
Denote by $\mu$ the distribution of $f(B_\tau)$ if $B_s$ is
started at~$z$.  
By the strong Markov property and conformal invariance of
Brownian motion, $\Prob^z(V)=\Prob^\mu(\tilde V)$ and also
\begin{equation}
\label{maineq}
\E^z[X] = \E^\mu[\tilde X] 
+ \Prob^z(V) \bigl(\E^z[X\mid V]-\E^\mu[\tilde X\mid \tilde V]\bigr).
\end{equation}
We claim
\begin{align}
\E^\mu[\tilde X] &= \tilde X_0 + O(t), 
\label{est1} \\
\E^\mu[\tilde X\mid \tilde V] &= \tilde X_0 + O(t), 
\label{est2} \\
\bigl|\E^z[X\mid V] - \tilde X_0\bigr| &\le u^*.
\label{est3}
\end{align}
The strong Markov property applied to $\rho$
yields~\eqref{est3}.
The strong Markov property applied to $\tilde\rho$, the
estimate 
\[
h_{\Outer_{kt}}(w,kte^{i\theta}) = [1 +
O(t)]/2\pi kt,
\qquad
|w| \ge 1/8,
\]
and~\eqref{theta1} together yield~\eqref{est2}.
If $w=O(t)$, we know $f(w)=w+O(t^2)$.
Moreover, if $h(w)=\E^w[\tilde X]$ then $|\nabla h(w)| =
O(1/t)$ for $7kt/8 < |w| < 9kt/8$ by~\eqref{theta1} and
standard derivative estimates for harmonic functions.
We conclude that $\E^{f(w)}[\tilde X] = \E^w[\tilde X] +
O(t)$ for $w\in C_{kt}$.
It follows that $\E^\mu[\tilde X] = \E^\nu[\tilde X] +
O(t)$ where $\nu$ is the hitting distribution on $C_{kt}$
of Brownian motion started at $z$.
But $\nu$ is uniform up to a relative error of $O(t)$,
which implies that $\E^\nu[\tilde X] = \tilde X_0 + O(t)$.
Combining these observations yields~\eqref{est1}.

If we apply~(\ref{est1}--\ref{est3}) to~\eqref{maineq},
we conclude that
\begin{align*}
\bigl|\E^z[X]-\tilde X_0\bigr|
&\le O(t) + \Prob^z(V)[u^* + O(t)],\\
\text{so}\qquad\qquad\qquad
u^* &\le O(t) + O(\delta_t) \, u^*,
\end{align*}
so $u^* = O(t)$.
Therefore, $X_0 = \tilde X_0 + O(t).$
\maybeqed\end{proof}

\begin{lemma}
\label{L2.8Str}
Let $D\in\domain$, $K\in\hulls_t$, $\psi=\psi_D$ and $s=s_{D,K}$.
Let $0<r<1$ and $z\in D$ with $r\le|z|$.
Let $t\to 0$.
Then
\begin{align*}
s&=\psi'(0)\,t\,[1+O(t)], \\
h_{D\sm K}(z,K) &= \delta_{\psi'(0)\,t} \,\log|1/\psi(z)| \,[1+O_r(t)].
\end{align*}
Moreover, if $w\in\p D$ is a smooth boundary point,
\[
\exc_{D\sm K}(w,K) = \delta_{\psi'(0)\,t} \,|\psi'(w)| \,[1+O(t)].
\]
In particular, if also $\tilde K\in \hulls_t$ then
$\exc_{D\sm\tilde K}(w,\tilde K)=\exc_{D\sm
K}(w,K)\,[1+O(t)]$.
\end{lemma}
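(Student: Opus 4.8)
The plan is to transfer everything to the disk via $\psi = \psi_D$, apply the already-established Lemma~\ref{exc-est}, and control the distortion introduced by $\psi$ using Lemma~\ref{NL} and Proposition~\ref{growthdist}. First I would set $K' = \psi(K)$, which is a hull contained in $\D$ containing the origin; by the conformal invariance of Brownian motion, $h_{D\sm K}(z,K) = h_{\D\sm K'}(\psi(z),K')$ and $s_{D,K} = s_{\D,K'}$. The issue is that $K'$ need not lie in $\hulls_t$: its capacity is $\ccap K' = \log(|\psi'(0)|\,t) + O(t)$ by Lemma~\ref{NL} (applied with $f = \psi$, noting $\psi'(0) > 0$), so $K' \in \hulls_{t'}$ with $t' = |\psi'(0)|\,t\,[1+O(t)]$. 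Feeding $t'$ into the first two displays of Lemma~\ref{exc-est} immediately gives $s = t'\,[1+O(t')] = \psi'(0)\,t\,[1+O(t)]$ and $h_{\D\sm K'}(\psi(z),K') = \delta_{t'}\,\log|1/\psi(z)|\,[1+O_{r'}(t')]$, where $r' = |\psi(z)|$. To remove the $r'$-dependence in favor of the given $r$, I would use the Koebe $1/4$--theorem and the Schwarz lemma: since $\dist(0,\p D) = 1$ and $|z| \ge r$, one gets $|\psi(z)| \ge r/4$ (or a comparable bound), so $O_{r'}(t') = O_r(t)$, and $\delta_{t'} = \delta_{\psi'(0)\,t}\,[1+O(t)]$ because $\delta$ is logarithmically slowly varying — more precisely $\delta_{t'}/\delta_{\psi'(0)t} = \log(1/(\psi'(0)t))/\log(1/t') = 1 + O(t/\log(1/t)) = 1 + O(t)$. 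This yields the second display.

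For the excursion-measure statement, I would again use conformal covariance of the excursion measure: $\exc_{D\sm K}(w,K) = |\psi'(w)|\,\exc_{\D\sm K'}(\psi(w),K')$, where $|\psi(w)| = 1$ since $w \in \p D$ and $\psi$ maps $\p D$ to $\p\D$ (this is where smoothness of $w$ and analyticity of $\psi$ at $w$ are used so that $\psi'(w)$ makes sense). The third display of Lemma~\ref{exc-est}, applied to $K' \in \hulls_{t'}$ at the boundary point $\psi(w)$ with $|\psi(w)| = 1$, gives $\exc_{\D\sm K'}(\psi(w),K') = \delta_{t'}\,[1+O(t')] = \delta_{\psi'(0)t}\,[1+O(t)]$, and multiplying by $|\psi'(w)|$ gives the claimed formula. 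Finally, the ``in particular'' clause is immediate: if $\tilde K \in \hulls_t$ as well, then both $\exc_{D\sm\tilde K}(w,\tilde K)$ and $\exc_{D\sm K}(w,K)$ equal $\delta_{\psi'(0)t}\,|\psi'(w)|\,[1+O(t)]$ with the same main term, so their ratio is $1 + O(t)$.

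The main obstacle I anticipate is the bookkeeping around the capacity shift: Lemma~\ref{NL} only tells us $\ccap\psi(K) = \log(|\psi'(0)|\,t) + O(t)$, i.e. the ``effective radius'' $t' = |\psi'(0)|\,t\,[1+O(t)]$, and one must check that feeding $t'$ rather than $t$ into the $O(\cdot)$ and $O_r(\cdot)$ error terms of Lemma~\ref{exc-est} is harmless — i.e. that $O(t') = O(t)$ (clear, since $\psi'(0)$ is a fixed finite constant once $D$ is fixed, though one should note the implicit constants may then depend on $D$, consistent with the statement) and that the slowly-varying factor $\delta_{t'}$ can be replaced by $\delta_{\psi'(0)t}$ up to multiplicative error $1+O(t)$ rather than merely $1+o(1)$. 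The latter is the only genuinely quantitative point and follows from the elementary estimate $\bigl|\log(1/t') - \log(1/(\psi'(0)t))\bigr| = |\log(1+O(t))| = O(t)$ together with $\log(1/t') \asymp \log(1/t) \to \infty$. Everything else is a routine combination of conformal invariance, Koebe distortion bounds from Proposition~\ref{growthdist}, and the previously proved Lemmas~\ref{exc-est} and~\ref{NL}.
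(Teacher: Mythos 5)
Your proposal is correct and follows essentially the same route as the paper: transfer to the disk via $\psi$, use Lemma~\ref{NL} to identify the capacity of $\psi(K)$ as $\log(\psi'(0)\,t)+O(t)$, apply Lemma~\ref{exc-est} with the shifted parameter $\tilde t=\psi'(0)\,t\,[1+O(t)]$, and absorb the slowly-varying discrepancy $\delta_{\tilde t}=\delta_{\psi'(0)\,t}\,[1+O(t)]$ together with the Koebe lower bound on $|\psi(z)|$ (the paper uses $|\psi(z)|\ge r/16$). Your extra care about feeding $t'$ into the error terms and about the logarithmic slow variation is exactly the bookkeeping the paper's one-line claims rely on.
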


\begin{proof}
Let $\log\tilde t=\ccap\psi(K)$.
By Lemma~\ref{NL}, $\ccap \psi(K) = \log
(\psi'(0)\,t)+O(t)$, so $\tilde t = \psi'(0)\,t\,[1+O(t)]$
and $\delta_{\tilde t} = \delta_{\psi'(0)\,t}\,[1+O(t)]$.
By Lemma~\ref{exc-est}, we conclude that $s=\tilde
t\,[1+O(\tilde t)]$ and
\begin{align*}
h_{D\sm K}(z,K)
&= h_{\D\sm \psi(K)}(\psi(z),\psi(K))
= \delta_{\tilde t}\,\log|1/\psi(z)|\,[1+O_r(\tilde t)],
\end{align*}
using the fact that $|\psi(z)|\ge r/16$.
The remaining estimates follow.
\maybeqed\end{proof}

\subsection{Schramm--Loewner evolution}
\label{sec:introsle}

\subsubsection{Chordal, radial and whole-plane SLE}
\label{sec:chordaletc}

Fix $\kappa>0$.  The \emph{Loewner equation} in the upper
half-plane $\Half$ is the ODE
\[
\p_t g_t(z)=\frac{a}{g_t(z)-U_t},
\quad g_0(z)=z,
\quad z\in\Half,
\]
where $a=2/\kappa$ and the \emph{driving function} $U_t$ is
continuous and real-valued.  Chordal \SLEk\ in~$\Half$ from
$0$ to $\infty$, which we denote by~$\mu_\Half(0,\infty)$, is
the random family of conformal maps $g_t$ induced by this
ODE when~$U_t$ is a standard Brownian motion.

We recall several facts about SLE that were first proved in the seminal paper of Rohde and Schramm~\cite{RS}.  
Chordal \SLEk\ is generated by a continuous curve
$\gamma:[0,\infty)\to\Half$ in the sense that for each $t$,
the set $H_t$ of points $z\in\Half$ for which the solution
exists beyond time~$t$ is the unbounded component of
$\Half\sm\gamma_t$.  (The most delicate case of $\kappa=8$ was 
unresolved in~\cite{RS} but was proved in~\cite{LSWust}.)
Moreover, $\gamma(t)\to\infty$ as
$t\to\infty$.  For $\kappa\le4$ the curve $\gamma$ is
simple, for $4<\kappa<8$ it touches itself, and for
$\kappa\ge8$ it fills the half-plane.

The $g_t$ are easily seen to satisfy the scaling rule
$r\inv\,g_{r^2t}(rz) \eqd g_t(z)$ for $r>0$, and hence
$\gamma\eqd r\gamma$, up to a reparametrization.
As a consequence, the chordal \SLEk\ curve~$\gamma$ from
$z$ to $w$ in any simply connected domain $D$, where
$z,w\in\p D$, may be defined as the conformal image
of \SLEk\ in~$\Half$ from~$0$ to~$\infty$, modulo
reparametrization.
Chordal \SLEk\ from~$z$ to~$w$ in~$D$ is then conformally
invariant and has the domain Markov property:
if $\tau$ is a stopping time, then conditional on the
initial segment $\gamma_\tau$, the remainder of the curve
has the law of \SLEk\ in $D\sm\gamma_\tau$
from~$\gamma(\tau)$ to~$w$.
(In the case $4<\kappa<8$, this means the component of
$D\sm\gamma_\tau$ adjoining~$w$.)

Radial \SLEk\ is a probability measure on curves joining a
boundary point to an interior point of a simply connected
domain~$D$.  It can be obtained from a similar ODE, the
radial Loewner equation in the disk, or can be obtained
from chordal \SLEk\ by weighting by an appropriate local
martingale---see Proposition~\ref{scpfprop}.  In particular,
the paths of chordal and radial SLE from the same point are
locally mutually absolutely continuous.  Radial \SLEk\ is
also conformally invariant and satisfies the domain Markov
property.

Whole-plane \SLEk\ is a variant on radial \SLEk\ in which
the curve $\gamma_t$ connects two marked points on the
Riemann sphere $\Chat$.  The domain Markov property then
takes the following form: conditional on an initial segment
$\gamma_\tau$, the remainder of the curve has the law of
radial \SLEk\ in $\Chat\sm\gamma_\tau$ from $\gamma(\tau)$
to the target point.  This property determines the measure
on curves, up to reparametrization.  We denote
whole-plane \SLEk\ from~$0$ to~$\infty$
by~$\mu_\C(0,\infty)$.  In this paper we adopt the convention that
the whole-plane SLE curve~$\gamma$ is always parametrized so that for each
$t>0$, $\ccap\gamma_t=\log t$.  We will use the notation
$g_t=g_{\gamma_t}$.  As mentioned before, we have $\rad \gamma_t
 \in [t,4t]$ and 
 \[    g_t'(z) = 1 + O(t) , \;\;\;\; t \leq 1/8, \;\;\;|z| \geq 1.\]

\subsubsection{The SLE partition function}
\label{sec:slepartfn}

Let~$D$ be a simply connected domain, 
 $z$ a smooth boundary point and
$w$ either an interior point or a smooth boundary point.
We will consider SLE from~$z$ to~$w$ in~$D$
as a finite measure
\[
\mu_D(z,w)=\tmass_D(z,w) \, \mu_D^\#(z,w),
\]
where $\tmass_D(z,w)$ is the partition function
and $\mu_D^\#(z,w)$ is the corresponding probability
measure.  For chordal and radial \SLEk\ the partition
function is determined by the conformal covariance
rule~\eqref{confconv},
together with the normalizations
$\tmass_\Half(0,\infty)=\tmass_\D(1,0)=1$.

We will frequently use ratios of partition functions
$\tmass_{D'}(z,w')/\tmass_D(z,w)$ even in the case where $\p
D'$ and $\p D$ are not smooth at~$z$.  This ratio is
well-defined so long as $D'$ and $D$ agree in a
neighborhood of~$z$.  Indeed, we can write
\[
\frac{\tmass_{D'}(z,w')}
     {\tmass_{D }(z,w )}
=
\frac{|g'(w')|^{b_{w'}} \, 
      \tmass_{g(D')}\bigl(g(z),g(w')\bigr)}
     {|g'(w )|^{b_{w }} \,
      \tmass_{g(D)}\bigl(g(z),g(w)\bigr)}
\]
for any conformal map $g$ defined on $D\cup D'$ with
$g(z)$ a smooth boundary point, where $b_w,b_{w'}$ are the
appropriate scaling exponents for $w,w'$ in $D,D'$
respectively.  Moreover, in the case of non-smooth boundary points
we will use the notation
\[
\frac{d\mu_{D'}(z,w')}
     {d\mu_{D }(z,w )}
:=
\frac{\tmass_{D'}(z,w')}
     {\tmass_{D }(z,w )}\,
\frac{d\mu_{D'}^\#(z,w')}
     {d\mu_{D }^\#(z,w )},
\]
even though there are no measures 
$\mu_{D }(z,w ),\mu_{D'}(z,w')$ as such.

When we say ``weighting paths locally'' by the
quantity~$Q_t$, we mean the following.  Take the unique
continuous local martingale $M_t=C_t\,Q_t$ with respect to
the implied filtration $\F_t$, where $C_0=1$ and $C_t$ is a
process of bounded variation called the \emph{compensator}
of $Q_t$.  Then consider the paths up to an appropriate
stopping time $\tau$ in the new measure $\Q_\tau$ defined
by $\Q_\tau(A)=\E(AM_{t\wedge\tau})$ for $A\in\F_t$.

Girsanov's theorem says that this change of measure can be
interpreted as adding a drift to the underlying Brownian
motion.
Using this argument, the following proposition can be
proved.
See~\cite{Lparkcity,LJSP,L11} for more detail on the partition function viewpoint of SLE.  
\begin{proposition}
\label{scpfprop}
Let $D$ and $D'$ be simply connected domains agreeing in a
neighborhood $U$ of the smooth boundary point $z$, and let
$w,w'$ be interior or smooth boundary points of $D,D'$
respectively.  Let $\tau$ be a stopping time for the \SLEk\
process $\mu_D(z,w)$ such that $\gamma_\tau$ does not hit
the boundary of $D$ or $D'$ outside $U$.  Then
$\mu_{D'}(z,w')$, up to time $\tau$, can be
obtained by weighting the paths of $\mu_D(z,w)$ locally by
the ratio of partition functions
\begin{equation}
\label{ratiopfgen}
\frac{\tmass_{D'\sm\gamma_t}(\gamma(t),w')}
     {\tmass_{D \sm\gamma_t}(\gamma(t),w )}.
\end{equation}
If also $D=D'$, then the compensator is trivial, so
that~\eqref{ratiopfgen} is the Radon-Nikodym derivative of
the measures on the initial segment $\gamma_t$.
\end{proposition}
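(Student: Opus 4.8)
The plan is to reduce to a conformal model, compute the It\^o differential of the partition-function ratio~\eqref{ratiopfgen} along the \SLEk\ flow, split it into a local martingale part and a finite-variation part, and then apply Girsanov's theorem, checking that the drift it places on the driving function is exactly the one characterizing $\mu_{D'}(z,w')$. First I would use conformal invariance of \SLEk\ and the covariance rule~\eqref{confconv} to map $(D,z)$ to a model domain: to $(\Half,0)$ when $w\in\p D$ (and then, by one further M\"obius map fixing $0$, to $w=\infty$, so that the hydrodynamic normalization gives $\tmass_{\Half\sm\gamma_t}(\gamma(t),w)\equiv1$), or to $(\D,1)$ with $w=0$ when $w$ is an interior point. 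Under this map $D'$ becomes a simply connected domain agreeing with the model near the starting point and $w'$ an interior or smooth boundary point of it, while the ratio~\eqref{ratiopfgen} picks up only the smooth deterministic cocycle $|g'(w')|^{b_{w'}}/|g'(w)|^{b_w}$, so it suffices to work in the model. Parametrize $\gamma$ by the Loewner parametrization appropriate to $\mu_D(z,w)$ (chordal if $w\in\p D$, radial if $w$ is interior), with driving Brownian motion $U_t$ and uniformizing maps $g_t$ sending $\gamma(t)$ to $U_t$. Uniformizing by $g_t$ and using~\eqref{confconv} once more, the ratio $Q_t$ from~\eqref{ratiopfgen} becomes
\[
Q_t=\frac{|g_t'(w')|^{b_{w'}}\,\tmass_{g_t(D'\sm\gamma_t)}\bigl(U_t,g_t(w')\bigr)}{|g_t'(w)|^{b_w}\,\tmass_{g_t(D\sm\gamma_t)}\bigl(U_t,g_t(w)\bigr)},
\]
in which the denominator is in the model domain (indeed $\equiv1$ in the chordal normalization $w=\infty$) and the numerator is in the near-model domain $g_t(D'\sm\gamma_t)$, for which closed forms and the associated second-order ODE/PDE for the partition function are available.

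Next I would differentiate $Q_t$ by It\^o's formula, using the Loewner ODE $\p_t g_t(\zeta)=a/(g_t(\zeta)-U_t)$ (and its radial variant), the evolution of $g_t'(\zeta)$ and of $U_t$, and---when $D\ne D'$---the evolution of the conformal modulus of $g_t(D'\sm\gamma_t)$; this produces $dQ_t=(\text{local martingale})+Q_t\,d\Theta_t$ with $\Theta_t$ of bounded variation. I would then treat two cases. When $D=D'$ and $w\ne w'$ (same domain, new target), the drift is governed by the second-order equation satisfied by the model partition function $\tmass_\Half(x,\cdot)$ (resp.\ $\tmass_\D(e^{i\alpha},\cdot)$), equivalently by the classical fact that the relevant power of a conformal radius / Poisson kernel is an \SLEk\ local martingale; one checks $\Theta_t\equiv0$, so $Q_t$ is a local martingale and its compensator is trivial. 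When $w=w'$ and $D'\subseteq D$ (same target, smaller domain), $\Theta_t\not\equiv0$, and the key point is to recognize $d\Theta_t=-\tfrac\cc2\,d\lm(\gamma_t,D\sm D';D)$, i.e.\ that $C_t:=\exp\{\tfrac\cc2\lm(\gamma_t,D\sm D';D)\}$---monotone in $t$ by the restriction property of $\loopm$, hence of bounded variation---is the compensator making $C_tQ_t$ a local martingale. This is the restriction-formula computation of~\cite{LSWrest} (see also~\cite{L11}). The general case follows by factoring $Q_t$ into a same-domain-new-target factor times a smaller-domain factor, after one further conformal map placing oneself in the nested situation, the loop-measure terms being combined via the cascade relation~\eqref{cascade}.

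Finally, Girsanov's theorem applied to the local martingale $M_t=C_tQ_t$ shows that in the tilted measure $\Q_\tau$ the driving process $U_t$ acquires a drift of SLE$(\kappa,\rho)$ type---the logarithmic derivative of the model partition function in the driving variable---since $\log C_t$, being of bounded variation, contributes nothing to the cross-bracket with $U$. Together with the compensator identification in the smaller-domain case, this is precisely the description of $\mu_{D'}(z,w')$ up to time $\tau$ that comes from its definition via~\eqref{bpert} and conformal covariance, so $\Q_\tau$ equals $\mu_{D'}(z,w')$ up to time $\tau$. When $D=D'$ the compensator is trivial, so $M_t=Q_t$ and $\Q_\tau$ is simply $\mu_D(z,w)$ reweighted by $Q_t/Q_0=Q_t$, which gives the Radon--Nikodym statement; the hypothesis on $\tau$ is what keeps $Q_t$ positive, the maps $g_t$ nondegenerate, and $M_t$ an honest martingale up to $\tau$.

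The step I expect to be the main obstacle is the smaller-domain case: establishing that the finite-variation part of the partition-function ratio is \emph{exactly} the Brownian loop measure increment $-\tfrac\cc2\,d\lm$. Beyond the core identity of~\cite{LSWrest}, the care needed is in handling the radial case (interior target $w$, where one works in $\D$ with the radial Loewner equation) and in reducing general domains $D,D'$ that merely agree near $z$ to the nested situation, where one leans on the cascade relation~\eqref{cascade} together with the restriction property of $\loopm$.
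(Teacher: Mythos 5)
Your proposal is correct and follows essentially the same route the paper takes: the paper does not write out a proof but explicitly invokes the local-weighting/compensator formalism and Girsanov's theorem (deferring details to \cite{Lparkcity,LJSP,L11} and the method of \cite{LSWrest}), which is exactly the reduction to a model domain, It\^o computation of the partition-function ratio, identification of the compensator with the loop-measure term, and SLE$(\kappa,\rho)$ drift identification that you describe. Your factorization of the general case into a same-domain--new-target step and a nested-domain step also matches how the paper itself deploys the proposition (e.g.\ in the proof of Proposition~\ref{pfprop}).
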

\begin{remark}
If $4<\kappa<8$, we ignore any extra connected components
that appear in $D\sm\gamma_t$ for the purpose of the
partition function.
\end{remark}

\subsubsection{Annuli and multiply connected domains}
\label{sec:sleannuli}

From now on let $\kappa\le4$.
Computing the compensator to
\[
\frac{\tmass_{D_1\sm\gamma_t} (\gamma(t),w)}
     {\tmass_{D\sm\gamma_t} (\gamma(t),w)},
\]
 one can
deduce the boundary perturbation rule~\eqref{bpert}.
This rule permits the definition of \SLEk\ in general
domains $D\subset\C$ as in~\cite{L11}.
There is a unique extension of the simply-connected domain
definition so that~\eqref{bpert} and the conformal
covariance rule hold across all domains $D$ and endpoints
$z,w$ (smooth points, if on the boundary).
To be specific, there are two cases defined:
boundary/boundary SLE, generalizing chordal SLE, and
boundary/bulk SLE, generalizing radial SLE.

The case of conformal annuli $D$ and smooth boundary points
$z,w$ on different boundary components is
called \emph{crossing annulus SLE}.  In this case, the
partition function $\tmass_D(z,w)$ is known 
to be finite---see~\cite{L11}---which 
permits us to discuss the crossing annulus SLE probability
measure $\mu^\#_D(z,w)$.  This probability measure is the
same as that used by Zhan in his proof of reversibility of
whole-plane SLE~\cite{ZhanWP}.

\begin{proposition}
\label{pfprop}
Let $D$ be a simply connected domain and $A\subset D$ a
conformal annulus such that $\p D$ is one component of $\p
A$.
Let $z\in\p D$, $w'\in\p A\sm\p D$ and $w\in D$, and
suppose $w'$ is a smooth boundary point.
Let $\tau$ be a stopping time for radial SLE $\mu_D(z,w)$
such that $\gamma_\tau$ does not leave $A$.
On the initial segment~$\gamma_\tau$, the Radon--Nikodym
derivative of crossing annulus SLE $\mu_A(z,w')$ with
respect to radial SLE $\mu_D(z,w)$ is
\[
\frac{d\mu_A(z,w')}
     {d\mu_D(z,w )}(\gamma_\tau)
=
\exp \Bigl\{ \frac\cc2 \lm(\gamma_\tau, D\sm A; D) \Bigr\}
\frac{\tmass_{A\sm\gamma_\tau}(\gamma(\tau),w')}
     {\tmass_{D\sm\gamma_\tau}(\gamma(\tau),w )}.
\]
\end{proposition}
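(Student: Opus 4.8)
Observe first that the claimed formula is, \emph{formally}, the general boundary--perturbation identity~\eqref{jun3.2} applied with the simply connected domain $D$, its subdomain $D_1=A$, and target points $w$ in $D$ and $w'$ in $A$ (recall that in~\eqref{jun3.2} the two targets need not coincide, and the ratio of partition functions is read off from the conformal covariance rule as in~\eqref{jun3.1}). The one point requiring care is that $A$ is not simply connected, so Proposition~\ref{scpfprop} does not apply to the pair $(D,A)$. The plan is to split the Radon--Nikodym derivative through the intermediate measure $\mu_D(z,w')$, namely radial \SLEk\ in the \emph{same} domain $D$ but aimed at $w'$ --- which is a genuine interior point of $D$, lying as it does on the inner boundary component of $A$:
\[
\frac{d\mu_A(z,w')}{d\mu_D(z,w)}(\gamma_\tau)
=
\frac{d\mu_A(z,w')}{d\mu_D(z,w')}(\gamma_\tau)\;
\frac{d\mu_D(z,w')}{d\mu_D(z,w)}(\gamma_\tau).
\]
The second factor involves only the simply connected domain $D$, and the sole non--simply--connected comparison is isolated in the first.

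For the second factor I apply Proposition~\ref{scpfprop} with $D'=D$: since the domains coincide, the compensator is trivial, so~\eqref{ratiopfgen} is exactly the Radon--Nikodym derivative and the factor equals $\tmass_{D\sm\gamma_\tau}(\gamma(\tau),w')\big/\tmass_{D\sm\gamma_\tau}(\gamma(\tau),w)$. For the first factor, put $C:=D\sm A$, a nonpolar compact connected set disjoint from $\gamma_\tau$, on whose boundary $w'$ sits, and note that $A$ agrees with $D$ in a neighbourhood of $z$. This is precisely the situation of the boundary--perturbation construction of~\cite{L11}: crossing annulus SLE $\mu_A(z,w')$ is obtained by weighting the paths of radial \SLEk\ $\mu_D(z,w')$ locally by the partition--function ratio $\tmass_{A\sm\gamma_t}(\gamma(t),w')\big/\tmass_{D\sm\gamma_t}(\gamma(t),w')$, and the compensator produced by Girsanov's theorem --- exactly as in the computation recalled after~\eqref{bpert} --- is $\exp\{\tfrac\cc2\lm(\gamma_t,C;D)\}$. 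Hence the first factor is $\exp\{\tfrac\cc2\lm(\gamma_\tau,C;D)\}\,\tmass_{A\sm\gamma_\tau}(\gamma(\tau),w')\big/\tmass_{D\sm\gamma_\tau}(\gamma(\tau),w')$, and multiplying the two factors and cancelling $\tmass_{D\sm\gamma_\tau}(\gamma(\tau),w')$ yields the proposition. All loop masses appearing here are finite: $\gamma_\tau$ and $C$ are disjoint compacta in the proper subdomain $D\subsetneq\C$, so after mapping $D$ into a bounded domain, Lemma~\ref{diamlem} and conformal invariance bound the mass of loops in $D$ meeting both.

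The substantive point is the rigorous justification of the first factor, the only ingredient not furnished by Proposition~\ref{scpfprop}; here one reduces to the simply connected case using the consistency of the~\cite{L11} definition. Pick an arc $\eta$ joining the two boundary components of $A$ and missing $z$ and $w'$, so that $\hat D:=A\sm\eta$ is simply connected with $z,w'\in\p\hat D$, and $\hat D$ agrees with $A$ near $z$ and $w'$ and with $D$ near $z$. On the event $\{\gamma_\tau\subset\hat D\}$ one may compare $\mu_A(z,w')$ with $\mu_{\hat D}(z,w')$ by the boundary--perturbation rule~\eqref{bpert}/\eqref{jun3.2} (legitimate since $\hat D\subset A$ is simply connected) and $\mu_{\hat D}(z,w')$ with $\mu_D(z,w')$ by Proposition~\ref{scpfprop}; the stopping time $\tau$ is exactly what keeps $\gamma$ inside the common region where these comparisons are valid for Girsanov's theorem. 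The $\eta$--dependent loop terms then cancel: with $D\sm\hat D=C\cup\eta$ and $A\sm\hat D=\eta$, the cascade relation~\eqref{cascade} gives $\lm(\gamma_t,\eta;D)-\lm(\gamma_t,\eta;A)=\lm(\gamma_t,\eta,C;D)$, and combined with inclusion--exclusion for $\lm(\gamma_t,C\cup\eta;D)$ this leaves exactly $\lm(\gamma_t,C;D)$. Since the resulting expression does not involve $\eta$ and every $\gamma_\tau\subset A$ avoids some admissible arc, the identity for the first factor holds on all of $\{\gamma_\tau\subset A\}$. I expect this loop--term bookkeeping together with the arc--independence to be the only genuinely delicate step; the remainder is an assembly of~\eqref{jun3.2}, Proposition~\ref{scpfprop}, and the construction of~\cite{L11}, of which the proposition may be regarded as a specialization.
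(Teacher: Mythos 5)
Your proof is correct and follows essentially the same route as the paper's: both factor the Radon--Nikodym derivative into a chain of comparisons through intermediate measures, reduce the non--simply-connected comparison to the simply connected case via a simply connected subdomain of $A$ containing $\gamma_\tau$ (the paper's generic $U$, your cut annulus $\hat D=A\sm\eta$), and assemble the result from Proposition~\ref{scpfprop}, the boundary-perturbation rule of~\cite{L11}, and the cascade relation~\eqref{cascade}. The only difference is cosmetic --- the paper interposes an auxiliary target $\tilde w\in U$ where you use $w'$ viewed as an interior point of $D$, and you carry out explicitly the inclusion--exclusion/cascade bookkeeping that the paper leaves to the reader.
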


\begin{proof}
Fix a partial curve $\gamma_\tau$ starting from $z$ in~$A$.
Find a simply connected domain $U\subset A$ which contains~$\gamma_\tau$ and agrees
with~$D$ near~$z$ and~$w'$. 
Let $\tilde w\in U$ be arbitrary.
On the partial curves $\gamma_\tau$, we can write down the Radon-Nikodym derivatives 
\[
\frac{d\mu_A(z,w')}{d\mu_U(z,w')},\quad
\frac{d\mu_U(z,w')}{d\mu_U(z,\tilde w)},\quad
\frac{d\mu_U(z,\tilde w)}{d\mu_D(z,\tilde w)}\quad\text{and}\quad
\frac{d\mu_D(z,\tilde w)}{d\mu_D(z,w)}
\]
using Proposition~\ref{scpfprop} and the 
boundary perturbation rule~\eqref{bpert}.
Taking their product 
and using the decomposition rule~\eqref{cascade}
yields the proposition.
\maybeqed\end{proof}

We will also use Theorem~4.6 from~\cite{L11}, which states
that crossing annulus SLE converges to radial SLE:
\begin{theorem}  
\label{anntorad}
There exist $c< \infty$, $q > 0$ such that the following
holds.
Let $t > 0$ and let $\gamma_t$ denote an initial segment of
a path in $\Disk$ starting at $1$ such that if
$g: \Disk \setminus \gamma_t \rightarrow \Disk$ is a
conformal transformation with $g(0) = 0, g'(0) > 0$, then
$g'(0) = e^{t}.$ Suppose that $\log(1/r) \geq t + 2$,
$0 \leq \theta < 2\pi$, and let $\mu_1 = \mu_\Disk(1,0)$,
$\mu_2 = \mu_{A_{r,1}}^\#(1, re^{i\theta})$, both
considered as probability measures on initial
segments~$\gamma_t$.
Let $Y = d\mu_2/d\mu_1$.
Then
\begin{equation}  \label{nov28.1}
  |Y(\gamma_t) - 1| \leq  c\, (re^t)^q. 
  \end{equation}
  Moreover, there exists $c_0 \in (0,\infty)$ such that
\begin{equation}  \label{nov28.2}
  \tmass_{A_{r,1}}(1,re^{ i\theta}) = c_0 \, r^{\btilde - b}
   \, [\log(1/r)]^{ \cc/2}\, [1 + O(r^q)]. 
   \end{equation}
\end{theorem}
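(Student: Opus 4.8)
The plan is to use the boundary--perturbation machinery of Sections~\ref{sec:slepartfn}--\ref{sec:sleannuli} to convert both assertions into a single asymptotic statement about the Brownian loop measure. Apply Proposition~\ref{pfprop} with $D=\Disk$, $A=A_{r,1}$, $z=1$, $w=0$, $w'=re^{i\theta}$, so that $D\sm A=\ol\Disk_r$. Since the radial \SLEk\ curve $\mu_\Disk(1,0)$ with $g_t'(0)=e^t$ has conformal radius $e^{-t}$ about $0$, it stays within $\ol A_{r,1}\sm C_r$ as long as $re^t<\tfrac14$, which is forced by $\log(1/r)\ge t+2$; and $\tmass_\Disk(1,0)=1$, $\mu_1$ being a probability measure. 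Hence
\[
Y(\gamma_t)=\frac{1}{\tmass_{A_{r,1}}(1,re^{i\theta})}\,
\exp\!\Bigl\{\tfrac{\cc}2\,\lm(\gamma_t,\ol\Disk_r;\Disk)\Bigr\}\,
\frac{\tmass_{A_{r,1}\sm\gamma_t}(\gamma(t),re^{i\theta})}
     {\tmass_{\Disk\sm\gamma_t}(\gamma(t),0)},
\]
and $Y$ is a true $\mu_1$-martingale with $Y_0=1$. So \eqref{nov28.1} amounts to showing that the product of the exponential and the partition-function ratio stays within a factor $1+O((re^t)^q)$ of $\tmass_{A_{r,1}}(1,re^{i\theta})$.

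The next step is to put the partition-function ratio in standard form. Write $g_t=g_{\gamma_t}:\Disk\sm\gamma_t\to\Disk$ and $\xi_t=g_t(\gamma(t))$. By Lemma~\ref{NL} (applied to $g_t$ rescaled to $\Disk$, using $\dist(0,\gamma_t)\ge e^{-t}/4$ from the Koebe theorem) the hull $g_t(\ol\Disk_r)$ has capacity $\log(re^t)+O(re^t)$, and by Proposition~\ref{growthdist} it differs from $\ol\Disk_{re^t}$ by relative error $O(re^t)$ once $re^t$ is small (the regime that matters; when $re^t\asymp1$ the bound $O((re^t)^q)$ is vacuous and only crude estimates are needed). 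Applying the conformal covariance rule~\eqref{confconv} to $g_t$ --- the interior point $0$ contributing $|g_t'(0)|^{\btilde}=e^{t\btilde}$, the boundary point $re^{i\theta}$ contributing $|g_t'(re^{i\theta})|^b=e^{tb}[1+O(re^t)]$, and $\gamma(t)$ contributing nothing because $A_{r,1}\sm\gamma_t$ and $\Disk\sm\gamma_t$ agree near $\gamma(t)$ --- together with $\tmass_\Disk(\xi_t,0)=1$, gives
\[
\frac{\tmass_{A_{r,1}\sm\gamma_t}(\gamma(t),re^{i\theta})}
     {\tmass_{\Disk\sm\gamma_t}(\gamma(t),0)}
=e^{t(b-\btilde)}\,[1+O(re^t)]\,
 \tmass_{\Disk\sm g_t(\ol\Disk_r)}(\xi_t,g_t(re^{i\theta})).
\]
The remaining factor is a crossing--annulus partition function of conformal modulus $\log(1/(re^t))+O(re^t)$; mapping to $A_{\tilde s,1}$ with $\tilde s=re^t[1+O(re^t)]$ (Lemma~\ref{exc-est}) by an inner transformation $h$, whose derivative is $1+O(re^t)$ on $\p\Disk$ by Proposition~\ref{outerboundary} and on the inner boundary because $g_t(\ol\Disk_r)$ is a near-perfect disk, and invoking \eqref{nov28.2} together with its independence of the boundary angle, yields
\[
\tmass_{\Disk\sm g_t(\ol\Disk_r)}(\xi_t,g_t(re^{i\theta}))
=c_0\,(re^t)^{\btilde-b}\,[\log(1/(re^t))]^{\cc/2}\,[1+O((re^t)^q)].
\]

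I would prove \eqref{nov28.2} itself beforehand (or, more honestly, simultaneously with \eqref{nov28.1} by induction on the scale, as one does in~\cite{L11}). Taking $\mu_1$-expectations in the displayed formula for $Y$ shows $\tmass_{A_{r,1}}(1,re^{i\theta})=\E^{\mu_1}[M_t]$ for every admissible $t$, where $M_t$ is the martingale $\exp\{\tfrac{\cc}2\lm(\gamma_t,\ol\Disk_r;\Disk)\}\,\tmass_{A_{r,1}\sm\gamma_t}(\gamma(t),re^{i\theta})/\tmass_{\Disk\sm\gamma_t}(\gamma(t),0)$; comparing this identity at two radii and using the cascade relation~\eqref{cascade} and the restriction property of $\loopm$ produces a functional equation in $r$ whose solution is forced to have the form $r^{\btilde-b}[\log(1/r)]^{\cc/2}$ --- the power $\btilde-b$ from converting the interior target to a target on a circle of radius $r$ via~\eqref{confconv}, and the power $\cc/2$ of $\log(1/r)$ from the $\log\log(1/r)$-growth of the loop-measure mass of loops that hit $\ol\Disk_r$ and escape far from the origin, the same phenomenon quantified by Theorem~\ref{main}. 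The quantitative input here is Proposition~\ref{good-lmstar-estimate}, proved in Section~\ref{subs:lmstar-est}.

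Finally, combining the three displays, the powers cancel: $e^{t(b-\btilde)}(re^t)^{\btilde-b}=r^{\btilde-b}$, and the $[\log(1/r)]^{\cc/2}$ in $\tmass_{A_{r,1}}(1,re^{i\theta})^{-1}$ cancels against $[\log(1/(re^t))]^{\cc/2}$ once the exponential term supplies the extra factor $(\log(1/r)/\log(1/(re^t)))^{\cc/2}$. Thus $Y(\gamma_t)=1+O((re^t)^q)$ reduces exactly to the loop-measure estimate
\[
\lm(\gamma_t,\ol\Disk_r;\Disk)
=\log\frac{\log(1/r)}{\log(1/(re^t))}+O((re^t)^q),
\]
uniformly in $\theta$ and over all admissible initial segments $\gamma_t$, and this is the step I expect to be the main obstacle. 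It asserts a precise value for a \emph{difference} of two infinite loop-measure quantities, so it cannot be obtained by monotonicity or by naively subtracting; I would attack it by decomposing the loops that hit $\ol\Disk_r$ according to their closest approach to the origin as in~\eqref{furthestroot}, expressing the resulting bubble-measure masses through the excursion and harmonic-measure estimates of Lemmas~\ref{exc-est} and~\ref{L2.8Str}, and then tracking the cancellation of the divergent contributions scale by scale. The key point is that $\gamma_t$ should enter only through its conformal radius $e^{-t}$ about the origin and not through its shape, because any loop reaching both $\ol\Disk_r$ and $\gamma_t$ must cross the conformal annulus $\Disk\sm(\ol\Disk_r\cup\gamma_t)$, whose modulus is $\log(1/r)-t+O(1)$, and the loop measure of such crossings is governed, up to $O((re^t)^q)$, by that modulus alone. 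When $\kappa=8/3$ the central charge $\cc$ vanishes, the exponential term is identically $1$, and the argument collapses to the elementary cancellation of powers, requiring no loop-measure estimate.
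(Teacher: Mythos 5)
First, a point of orientation: the paper does not prove this statement at all --- it is quoted verbatim as Theorem~4.6 of~\cite{L11} and used as an imported input. So there is no internal proof to compare against, and your proposal has to be judged as a free-standing reconstruction.

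Your overall architecture is right, and one part of it is better than you think. Writing $Y$ via Proposition~\ref{pfprop}, pushing the partition-function ratio through $g_t$ with~\eqref{confconv}, and checking that the powers $e^{t(b-\btilde)}(re^t)^{\btilde-b}=r^{\btilde-b}$ and the two $[\log]^{\cc/2}$ factors cancel is all correct arithmetic. Moreover, the loop-measure estimate you single out as ``the main obstacle'' is in fact a short consequence of results already in the paper: writing $D'=\Disk\sm\gamma_t$ and decomposing loops that hit $\ol\Disk_r$ and leave $D'$ according to whether they leave $\Disk$, one gets $\Lambda(\ol\Disk_r,\gamma_t;\Disk)=\Lambda^*(\ol\Disk_r,\p D')-\Lambda^*(\ol\Disk_r,C_1)$, and applying Proposition~\ref{good-lmstar-estimate} to both terms (after rescaling $D'$ by a factor $\asymp e^{t}$ and using M\"obius invariance of $\Lambda^*$) gives exactly $\log\bigl[\log(1/r)/\log(1/(re^t))\bigr]+O(re^t)$. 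No scale-by-scale cancellation of divergences is needed; the paper has already packaged that for you.

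The genuine gap is~\eqref{nov28.2} itself, which your argument uses as an input (applied to the image annulus $\Disk\sm g_t(\ol\Disk_r)$ with its \emph{random} marked points) and then proposes to establish by a ``functional equation whose solution is forced to have the form $r^{\btilde-b}[\log(1/r)]^{\cc/2}$.'' The identity $\tmass_{A_{r,1}}(1,re^{i\theta})=\E^{\mu_1}[M_t]$ relates the partition function at modulus $\log(1/r)$ to an expectation of the partition function at modulus $\approx\log(1/(re^t))$ evaluated at a \emph{random angle} $\Theta$. This does not force the stated form: it is equally consistent with $\tmass_{A_{r,1}}(1,re^{i\theta})=r^{\btilde-b}[\log(1/r)]^{\cc/2}\,h(r,\theta)$ for any $h$ that is approximately invariant under the averaging induced by the flow. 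To close the argument you must prove (i) that the $\theta$-dependence washes out at a power rate, $h(r,\theta)=c_0[1+O(r^q)]$ uniformly in $\theta$ --- this is a mixing/coupling statement about the angular component of radial SLE and is precisely where the unspecified exponent $q$ comes from --- and (ii) a renewal-type argument producing the constant $c_0$. Neither is supplied, and invoking ``\eqref{nov28.2} together with its independence of the boundary angle'' midway through the proof of~\eqref{nov28.1} is circular, since that independence is the substantive content of the theorem. This mixing estimate is the actual core of the proof in~\cite{L11}, and without it your proposal establishes only that~\eqref{nov28.1} and~\eqref{nov28.2} are equivalent modulo Proposition~\ref{good-lmstar-estimate}, not that either holds.
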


\begin{corollary}
\label{convcor}
Radial SLE $\mu_\D(1,0)$ is
the weak limit of 
\[
c_0\inv\, r^{b-\btilde}\,
[\log(1/r)]^{-\cc/2}\, \mu_{A_{r,1}}(1,re^{i\theta})
\]
as $r \downarrow 0$, uniformly in~$\theta$.
\end{corollary}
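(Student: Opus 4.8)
The plan is to read the corollary directly off Theorem~\ref{anntorad}. Write $\mu_{A_{r,1}}(1,re^{i\theta})=\tmass_{A_{r,1}}(1,re^{i\theta})\,\mu_2$, where $\mu_2=\mu_{A_{r,1}}^\#(1,re^{i\theta})$ is the probability measure appearing in that theorem. Substituting the asymptotic~\eqref{nov28.2}, the prefactor $c_0\inv\,r^{b-\btilde}\,[\log(1/r)]^{-\cc/2}$ cancels the leading behaviour of $\tmass_{A_{r,1}}(1,re^{i\theta})$ exactly, so that
\[
c_0\inv\,r^{b-\btilde}\,[\log(1/r)]^{-\cc/2}\,\mu_{A_{r,1}}(1,re^{i\theta})
=[1+O(r^q)]\,\mu_2 ,
\]
with the error uniform in $\theta$. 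In particular the total masses tend to $1=\|\mu_\D(1,0)\|$, so it remains only to show that $\mu_2\to\mu_\D(1,0)$ weakly as $r\downarrow0$, uniformly in $\theta$.

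For the restriction to an initial segment this is immediate from~\eqref{nov28.1}: for each fixed capacity time $t$, on the $\sigma$-algebra $\F_t$ generated by $\gamma_t$ we have $\mu_2=Y\,\mu_\D(1,0)$ with $|Y-1|\le c\,(re^t)^q$, hence $\mu_2|_{\F_t}\to\mu_\D(1,0)|_{\F_t}$ in total variation as $r\downarrow0$, uniformly in $\theta$. Consequently $\int\Phi\,d\mu_2\to\int\Phi\,d\mu_\D(1,0)$ uniformly in $\theta$ for every bounded continuous functional $\Phi$ of the curve depending only on its initial segment up to capacity time $t$, for any $t$.

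To upgrade this to weak convergence of the measures on the full curves (modulo reparametrization) I would combine the $\F_t$-convergence with tightness and a tail estimate, exactly as in~\cite{L11}. Tightness of $\{\mu_2\}$ follows from the standard regularity of \SLEk\ curves for $\kappa\le4$; and the domain Markov property together with the capacity normalization $g_t'(0)=e^t$ and the Koebe theorem bounds the diameter of $\gamma[t,\infty)$, under both $\mu_2$ and $\mu_\D(1,0)$, by a quantity tending to $0$ as $t\to\infty$ uniformly in $r,\theta$, since after capacity time $t$ the curve is confined to the component of the slit domain about the target point and reaches it. Any subsequential weak limit of $\mu_2$ then agrees with $\mu_\D(1,0)$ on every $\F_t$, hence on the whole curve, which is the claim. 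The only genuine work is this tail estimate, which I expect to be the main (though routine) obstacle.
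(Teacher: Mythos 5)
The first half of your argument---cancelling the prefactor against the asymptotic~\eqref{nov28.2} so that only $\mu_2\to\mu^\#_\D(1,0)$ remains to be shown, and then getting total-variation convergence on each $\F_t$ from~\eqref{nov28.1}---is correct and is exactly the first ingredient of the paper's (one-line) proof, which reads: ``This follows from Theorem~\ref{anntorad} together with the continuity of radial SLE at its terminal point, which was proved in~\cite{Lradcont}.''

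The gap is in your tail estimate, which you correctly identify as the only remaining work but then dispose of too quickly. The capacity normalization and the Koebe $1/4$--theorem tell you that $\dist(0,\gamma_t)\le e^{-t}$, i.e.\ that the \emph{already drawn} curve comes close to the target point; they say nothing about where the \emph{future} curve $\gamma[t,\infty)$ goes. For $\kappa\le4$ the slit domain $\D\sm\gamma_t$ is connected and still has diameter of order $1$, so ``the component of the slit domain about the target point'' in which the remainder is confined is not small, and the domain Markov property alone does not prevent $\gamma[t,\infty)$ from making excursions back to $C_1$. The assertion that $\diam\gamma[t,\infty)\to0$ (with high probability, and with the quantitative rate needed to make the convergence uniform over the family $\mu^\#_{A_{r,1}}(1,re^{i\theta})$ via the Radon--Nikodym bound~\eqref{nov28.1}) is precisely the continuity of radial \SLEk\ at its terminal point; this is a genuinely nontrivial result that is the subject of the separate paper~\cite{Lradcont}, and it is the second ingredient the corollary's proof must invoke. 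So your outline is the right shape, but the step you call routine is the one place where an external theorem, not Koebe plus the Markov property, is required.
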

This follows from Theorem~\ref{anntorad} together with the
continuity of radial SLE at its terminal point, which was
proved in~\cite{Lradcont}.  By Lemma~\ref{ratiocomp}, we conclude:
\begin{corollary}
\label{cor:gamma-convergence}
Let $\gamma$ be a curve starting at $0$ parametrized
so that $\ccap\gamma_t=\log t$, $D$ a simply connected
domain containing~$0$, and $w\in\p D$.
The radial SLE probability measure $\mu^\#_D(w,0)$ 
is the weak limit of the annulus SLE probability measures 
$\mu^\#_{D\sm\gamma_t}(w,\gamma(t))$ as $t \downarrow 0$, uniformly in~$\gamma$.
\end{corollary}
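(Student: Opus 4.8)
The plan is to carry the statement to a standard round annulus and reduce it to Corollary~\ref{convcor}. By conformal invariance of radial \SLEk\ and of the crossing annulus SLE probability measure, I would first assume $D=\D$ and $w=1$; the only cost is that the capacity parametrization of $\gamma$ is distorted by the bounded factor $\psi_D'(0)$ with an $O(t)$ error (Lemma~\ref{NL}), which is harmless since we only take $t\downarrow0$. For $t$ small enough that $\gamma_t\subset\D$, write $s_t=s_{\D,\gamma_t}$ for the conformal modulus of $\D\sm\gamma_t$ and let $F_t\colon\D\sm\gamma_t\to A_{s_t,1}$ be a conformal map carrying $\p\D$ to $C_1$, $1$ to $1$, and $\gamma(t)$ to $s_te^{i\theta_t}$ for some angle~$\theta_t$. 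Since $\gamma(t)$ and $1$ then become smooth boundary points of $A_{s_t,1}$, whose crossing-annulus partition function is finite, conformal invariance yields
\[
\mu^\#_{\D\sm\gamma_t}(1,\gamma(t))=F_t\inv\circ\mu^\#_{A_{s_t,1}}(1,s_te^{i\theta_t}),
\qquad
\mu^\#_\D(1,0)=\mu_\D(1,0).
\]

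Next I would dispose of the two limits separately. For the limit inside the annulus, write $\mu_{A_{r,1}}(1,re^{i\theta})=\tmass_{A_{r,1}}(1,re^{i\theta})\,\mu^\#_{A_{r,1}}(1,re^{i\theta})$; by~\eqref{nov28.2} the scalar $c_0\inv\,r^{b-\btilde}\,[\log(1/r)]^{-\cc/2}\,\tmass_{A_{r,1}}(1,re^{i\theta})$ equals $1+O(r^q)$ uniformly in~$\theta$, so from Corollary~\ref{convcor} we get that $\mu^\#_{A_{r,1}}(1,re^{i\theta})\to\mu_\D(1,0)$ weakly as $r\downarrow0$, uniformly in~$\theta$. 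By Lemma~\ref{ratiocomp}, $s_t\asymp t$, so $s_t\downarrow0$ uniformly in~$\gamma$, and hence $\mu^\#_{A_{s_t,1}}(1,s_te^{i\theta_t})\to\mu_\D(1,0)$ weakly, uniformly in~$\gamma$, by the uniformity in the angle. For the conformal maps, since $\gamma_t$ has radius $O(t)$ the annuli $\D\sm\gamma_t$ converge to $\D\sm\{0\}$ in the Carath\'eodory sense, so $F_t\inv\to\mathrm{id}$ locally uniformly on $\D\sm\{0\}$; the growth and distortion estimates (Proposition~\ref{growthdist}, Lemma~\ref{L2.8Str}) make this quantitative, $O(t)$ on any fixed compact subset of $\D\sm\{0\}$, uniformly over~$\gamma$.

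It remains to push the annulus limit forward under $F_t\inv$, and \textbf{the hard part will be} the terminal point: $F_t\inv$ degenerates near~$0$, carrying $C_{s_t}$ onto $\gamma_t$, so $F_t\inv\to\mathrm{id}$ fails there and one cannot simply compose the two convergences. I would handle this in the same spirit as the proof of Corollary~\ref{convcor}, invoking continuity of radial \SLEk\ at its terminal point~\cite{Lradcont}: fix $\epsilon>0$ and stop each curve at the first hitting time $\sigma_\epsilon$ of the circle of radius $\epsilon$ about~$0$. On the initial segment $[0,\sigma_\epsilon]$ the curves lie in the compact set $\ol\D\sm\Disk_\epsilon\subset\D\sm\{0\}$, where $F_t\inv\to\mathrm{id}$ uniformly, so the truncated pushforwards converge weakly to the $\sigma_\epsilon$-stopped version of $\mu_\D(1,0)$; and the tails past $\sigma_\epsilon$ have diameter tending to~$0$ with~$\epsilon$, uniformly in $t$ and~$\gamma$ — the corresponding near-endpoint estimate for radial \SLEk\ transfers to crossing annulus SLE via the Radon--Nikodym bound~\eqref{nov28.1}. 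Letting $\epsilon\downarrow0$ then gives $\mu^\#_{\D\sm\gamma_t}(1,\gamma(t))\to\mu_\D(1,0)=\mu^\#_\D(1,0)$ weakly, uniformly in~$\gamma$, which is the assertion once the initial reduction is undone. Every step apart from this terminal-point control is a routine application of conformal invariance and the cited estimates.
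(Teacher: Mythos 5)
Your proposal is correct and follows essentially the same route as the paper, which disposes of this corollary in one line: conformal invariance reduces $D\sm\gamma_t$ to the round annulus $A_{s_t,1}$, Lemma~\ref{ratiocomp} gives $s_t\asymp t\to0$ uniformly in $\gamma$, and Corollary~\ref{convcor} (uniform in the angle) supplies the limit. The terminal-point difficulty you isolate is real but is exactly the issue the paper already delegates to the continuity of radial SLE at its terminal point~\cite{Lradcont} in deducing Corollary~\ref{convcor}, so your expanded treatment is a legitimate filling-in rather than a different argument.
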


\section{Radial SLE from the interior}
\label{sec:sle}

In this section we prove Theorem~\ref{slethm}, leaving the
proof of Theorem~\ref{main} until Sect.~\ref{sec:blm}.
Throughout this section we assume that $\kappa \leq 4$.
Our approach is to \emph{define} a measure $\mu_D(0,w)$ for
simply connected $D$ that satisfies~\eqref{revradrnd} and
then to prove that it is the reversal of $\mu_D(w,0)$.

\subsection{Definition}
\label{sec:radfromintdefn} 
 
Let $D$ be a simply connected domain containing the
origin.
Recall the normalized loop measure
\[
    \Lambda^*(V_1,V_2) = \lim_{r \downarrow 0}
             [\Lambda(V_1,V_2;\Outer_r) - \log \log(1/r)],
\]
where $\Lambda(V_1,V_2;\Outer_r)$ is the Brownian loop
measure of the loops that hit $V_1$ and $V_2$ but do not
come within distance $r$ of the origin.
\begin{definition}
\label{radfromint}
Let $T$ be a positive stopping time for whole-plane SLE $\mu_\C(0,\infty)$
such that with probability~1, $\gamma_T\subset D$. 
\emph{Radial SLE from the interior of $D$ to $w\in\partial D$} 
is a measure $\mu_D^T(0,w)$ on curves $\gamma_T$ up to time~$T$.  
It is defined by its density with respect to whole-plane SLE, 
which is
\begin{equation}
\label{definition}
\frac{d\mu_D^T(0,w)}
     {d\mu_\C(0,\infty)}(\gamma_T)
= c_1 \exp\Bigl\{ \frac\cc2 \Lambda^*(\gamma_T,D^c) \Bigr\}
\frac{\tmass_{ D\sm\gamma_T}(\gamma(T),w)}
     {\tmass_{\C\sm\gamma_T}(\gamma(T),\infty)}.
\end{equation}
In this formula, the last factor is the ratio of partition
functions for annulus and whole-plane SLE, and $1/c_1=c_0$
is the constant from Theorem~\ref{anntorad}.
\end{definition}

\begin{proposition}
Let $T\ge\tau$ be stopping times for whole-plane SLE such
that with probability~1, $\gamma_T\subset D$.
The measure $\mu_D^T(0,w)$ on $\gamma_T$, considered as a
measure on the initial segment~$\gamma_\tau$, is the same
as the measure $\mu_D^\tau(0,w)$.
\end{proposition}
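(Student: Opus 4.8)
The plan is to recognise the defining density as a martingale under whole‑plane SLE. Write $M_t$ for the density in~\eqref{definition},
\[
M_t = c_1\exp\Bigl\{\frac\cc2\Lambda^*(\gamma_t,D^c)\Bigr\}\,
\frac{\tmass_{D\sm\gamma_t}(\gamma(t),w)}{\tmass_{\C\sm\gamma_t}(\gamma(t),\infty)},
\]
so that $d\mu_D^t(0,w)/d\mu_\C(0,\infty)=M_t$ on the $\sigma$‑algebra $\F_t$ generated by $\gamma_t$. Truncation to $\gamma_\tau$ is $\F_\tau$‑measurable, so the push‑forward of $\mu_D^T(0,w)$ under it has density $\E^{\mu_\C}[M_T\mid\F_\tau]$ with respect to $\mu_\C(0,\infty)$ on $\F_\tau$; since this must equal $M_\tau$, the density of $\mu_D^\tau(0,w)$, and $M_\tau>0$, it suffices to prove $\E^{\mu_\C}[M_T/M_\tau\mid\F_\tau]=1$. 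We use throughout that $\gamma_T\subset D$ a.s., so all the domains and partition functions below make sense.

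First I would establish the additivity
\[
\Lambda^*(\gamma_t,D^c)-\Lambda^*(\gamma_\tau,D^c) = \Lambda(\gamma_t,D^c;\C\sm\gamma_\tau), \quad \tau\le t,
\]
with the right‑hand side finite. The cascade rule~\eqref{cascade} with $(V_1,V_2,V_3)=(D^c,\gamma_t,\gamma_\tau)$ in $\Outer_r$ — a loop meeting $\gamma_\tau$ automatically meets $\gamma_t\supseteq\gamma_\tau$ — gives $\Lambda(\gamma_t,D^c;\Outer_r)=\Lambda(\gamma_\tau,D^c;\Outer_r)+\Lambda(\gamma_t,D^c;\Outer_r\sm\gamma_\tau)$. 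Subtracting $\log\log(1/r)$ and letting $r\downarrow0$, the first two terms converge by Theorem~\ref{main} (the pairs $\{\gamma_t,D^c\}$ and $\{\gamma_\tau,D^c\}$ are disjoint nonpolar closed sets, since $\gamma_\tau\subseteq\gamma_t\subset D$), hence so does the third, and by monotone convergence its limit is $\Lambda(\gamma_t,D^c;\C\sm\gamma_\tau)$.

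Next, fix $\gamma_\tau$ and set $E=\C\sm\gamma_\tau$, $D'=D\sm\gamma_\tau$, $\zeta=\gamma(\tau)$. Since $\gamma_\tau\subset D$ we have $E\sm D'=D^c$, and since $\kappa\le4$ makes the slit $\gamma_\tau$ simple, $D'$ is a simply connected subdomain of $E$ agreeing with $E$ near $\zeta$, with $\zeta,w\in\p D'$. Let $\gamma'$ be the part of $\gamma$ after time $\tau$, viewed as a curve in $E$ from $\zeta$ (its truncations are the sets $\gamma_t\sm\gamma_\tau$). Using the additivity together with $D\sm\gamma_t=D'\sm(\gamma_t\sm\gamma_\tau)$, $\C\sm\gamma_t=E\sm(\gamma_t\sm\gamma_\tau)$, $\Lambda(\gamma_t,D^c;E)=\Lambda(\gamma_t\sm\gamma_\tau,D^c;E)$ (loops in $E$ miss $\gamma_\tau$), and the renaming $\tmass_{\C\sm\gamma_\tau}(\zeta,\infty)=\tmass_E(\zeta,\infty)$, $\tmass_{D\sm\gamma_\tau}(\zeta,w)=\tmass_{D'}(\zeta,w)$, one rewrites
\[
\frac{M_t}{M_\tau} = \frac{\tmass_E(\zeta,\infty)}{\tmass_{D'}(\zeta,w)}\cdot
\exp\Bigl\{\frac\cc2\Lambda\bigl(\gamma_t\sm\gamma_\tau,D^c;E\bigr)\Bigr\}\,
\frac{\tmass_{D'\sm(\gamma_t\sm\gamma_\tau)}(\gamma(t),w)}{\tmass_{E\sm(\gamma_t\sm\gamma_\tau)}(\gamma(t),\infty)}.
\]
Now~\eqref{jun3.2}, applied in $E$ with the subdomain $D'$ (target $w$ versus target $\infty$, $E\sm D'=D^c$) — after precomposing with a conformal map such as $g_{\gamma_\tau}$ that sends $\zeta$ to a smooth boundary point, so that the individual partition functions acquire meaning through~\eqref{confconv} and~\eqref{jun3.1} — identifies the exponential times the partition‑function ratio as $d\mu_{D'}(\zeta,w)/d\mu_E(\zeta,\infty)$ on the initial segment $\gamma_t\sm\gamma_\tau$. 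Dividing out by the total masses, $M_t/M_\tau = d\mu^\#_{D'}(\zeta,w)/d\mu^\#_E(\zeta,\infty)$ on that initial segment: the Radon--Nikodym derivative of chordal SLE in $D'$ from $\zeta$ to $w$ relative to radial SLE in $E$ from $\zeta$ to $\infty$. But the domain Markov property of whole‑plane SLE says that, conditionally on $\F_\tau$, the law of $\gamma'$ is exactly $\mu^\#_E(\zeta,\infty)$; integrating the Radon--Nikodym derivative then gives $\E^{\mu_\C}[M_t/M_\tau\mid\F_\tau]=1$. Taking $t=T$ completes the argument.

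The hard part is the bookkeeping in the final step: the partition functions $\tmass_{D'}(\zeta,w)$ and those at the slit tip $\zeta$ do not exist individually, so one must transport everything through a conformal map and use conformal covariance~\eqref{confconv} and conformal invariance of the loop measure to see the combination as the Radon--Nikodym derivative of honest probability measures to which~\eqref{jun3.2} applies. The additivity of $\Lambda^*$, while short given Theorem~\ref{main}, is the other place demanding care about precisely which loops are counted (in $\Outer_r$ versus $\Outer_r\sm\gamma_\tau$ versus $\C\sm\gamma_\tau$).
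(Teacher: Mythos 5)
Your proof is correct and follows essentially the same route as the paper: you decompose the normalized loop measure term via the cascade rule (the paper's identity~\eqref{loopdecomp}), identify the conditional factor as the Radon--Nikodym derivative of crossing annulus SLE with respect to radial SLE in the slit domain (the paper cites Proposition~\ref{pfprop} for exactly this), and conclude with the domain Markov property of whole-plane SLE; your $\E[M_T/M_\tau\mid\F_\tau]=1$ is just the paper's $X\,\E(Y\mid\F_\tau)=M_\tau$ after normalization. The only quibble is terminology: $\mu_{D\sm\gamma_\tau}(\gamma(\tau),w)$ is crossing annulus SLE, not chordal SLE, since $D\sm\gamma_\tau$ is doubly connected.
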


\begin{proof}
Using the loop measure decomposition~\eqref{loopdecomp}, we
can factor the density as
$\bigl(d\mu_D^T(0,w)/d\mu_\C(0,\infty)\bigr)\,(\gamma_T)=XY,$
where
\[
X=c_1 \exp\Bigl\{ \frac\cc2 \lmstar(\gamma_\tau,D^c) \Bigr\},
\quad
Y=\exp\Bigl\{ \frac\cc2 \lm(\gamma_T,D^c;\gamma_\tau^c) \Bigr\}
\frac{\tmass_{D\sm\gamma_T}(\gamma(T),w)}
     {\tmass_{\C\sm\gamma_T}(\gamma(T),\infty)}.
\]
Note that $X$ is $\F_\tau$--measurable.  By
Proposition~\ref{pfprop}, conditional on $\F_\tau$,
\[
Y=
\frac{d\mu_{D\sm\gamma_\tau}(\gamma(\tau),w)}
     {d\mu_{\C\sm\gamma_\tau}(\gamma(\tau),\infty)}
(\gamma_T),
\quad\text{so}\quad
\E(Y\mid\F_\tau)=
\frac{\tmass_{D\sm\gamma_\tau}(\gamma(\tau),w)}
     {\tmass_{\C\sm\gamma_\tau}(\gamma(\tau),\infty)}
\]
by the domain Markov property for whole-plane SLE. It
follows that
\[
\E\biggl[
\frac{d\mu_D^T(0,w)}
     {d\mu_\C(0,\infty)}(\gamma_T)
\biggm| \F_\tau \biggr]
= X\, \E(Y\mid\F_\tau) =
\frac{d\mu_D^\tau(0,w)}
     {d\mu_\C(0,\infty)}(\gamma_\tau).\qedhere
\]
\end{proof}

This proposition tells us that radial SLE from the interior
is defined consistently across stopping times $T$.
In particular, the total mass of the measure is independent
of $T$.
This means we may consider radial SLE from the interior as
a well-defined measure on curves from~$0$ in~$D$ stopped
before hitting~$\p D$.
We will denote this measure by~$\mu_D(0,w)$ and its
partition function by $\tmass_D(0,w)=\|\mu_D(0,w)\|$.

\subsection{Density estimate}
\label{sec:densityest}

The key to our analysis of radial SLE from the interior is
the following estimate on the normalized loop measure of
the loops that hit both boundary components of a conformal
annulus.
We prove a substantially stronger estimate than is needed
for this paper, because we think that the result is
interesting in its own right.

\begin{proposition}
\label{good-lmstar-estimate}
There exists $c < \infty$ such that if $D \in \domain$,
$t \leq 1/8$ and $K$ is a hull of capacity $\log t$ 
containing the origin, then
\[    \left|\Lambda^*(K, \p D) + \log \log(1/\psi'(0)\,t)  \right| \leq c \,t, \]
where $\psi:D \rightarrow \Disk$ is the unique conformal
transformation with $\psi(0) = 0$ and $\psi'(0) > 0$.
\end{proposition}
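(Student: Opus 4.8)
The plan is to estimate $\Lambda(K,\p D;\Outer_\rho)$ for small $\rho>0$ and then let $\rho\downarrow0$, by disintegrating the loop measure over a loop's closest approach to the origin. Two reductions come first. A Brownian loop that touches a smooth subarc of $\p D$ crosses it almost surely, so ``hits $\p D$'' may be replaced by ``leaves $D$''; and since $K$ is a hull with $\rad K\in[t,4t]$ by Remark~\ref{radius-bound}, while $t\le1/8$ forces $\overline\D_{\rad K}\subset\D_1\subset D$, a loop that hits $K$ and leaves $D$ must also hit $\p K$. In particular every loop counted by $\Lambda(K,\p D;\Outer_\rho)$ comes within distance $\rad K$ of the origin, whereas loops meeting $\p D$ that stay inside the open set $D\sm\overline\D_\rho$ contribute nothing. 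Hence, by the bubble-measure formula for the loop measure restricted to loops meeting a small disk (the variant of~\eqref{feb27.3.new} stated immediately after it), for $\rho<\rad K$,
\[
\Lambda(K,\p D;\Outer_\rho)
=\frac1\pi\int_0^{2\pi}\int_\rho^{\rad K}
m_{\Outer_s}(se^{i\theta})\{\,\gamma\text{ hits }K\text{ and leaves }D\,\}\,s\,ds\,d\theta .
\]

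The heart of the proof is to pin down the inner bubble mass $F(s,\theta):=m_{\Outer_s}(se^{i\theta})\{\gamma\text{ hits }K\text{ and leaves }D\}$. Write $F(s,\theta)$ as a difference of two finite quantities using restriction of the bubble measure: intersecting first with ``$\gamma$ hits $C_{1/2}$'' (which leaving $D$ entails, since $\rad K\le\tfrac14$ and $\dist(0,\p D)=1$),
\[
F(s,\theta)=m_{\Outer_s}(se^{i\theta})\{\gamma\text{ hits }K\text{ and }C_{1/2}\}
-m_{D\sm\overline\D_s}(se^{i\theta})\{\gamma\text{ hits }K\text{ and }C_{1/2}\}.
\]
The second term lies entirely in $D$ and is transported to the disk by $\psi=\psi_D$ via the conformal covariance~\eqref{bubcc} of the bubble measure; since $\psi(se^{i\theta})=\psi'(0)se^{i\theta}[1+O(s)]$, $|\psi'(se^{i\theta})|=\psi'(0)[1+O(s)]$, $\ccap\psi(K)=\log(\psi'(0)\,t)+O(t)$ (Lemma~\ref{NL}), and $\psi(C_{1/2})$ is a Jordan curve encircling $0$ at distance bounded below, this term depends on $D$, to the precision we need, only through $\psi'(0)$ and the round-annulus data of Lemma~\ref{L2.8Str}. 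Decomposing each bubble at its first visit to $C_{1/2}$ (resp.\ its image), scaling the bubble measure, and invoking the harmonic- and excursion-measure estimates of Lemmas~\ref{exc-est} and~\ref{L2.8Str} (which supply $\delta_{\psi'(0)\,t}$, $\psi$ and $\psi'$ with errors $O(t)$, $O_r(t)$), one obtains after averaging over $\theta$ that
\[
\frac1\pi\int_0^{2\pi}F(s,\theta)\,s\,d\theta
=\frac1{s\log(1/s)}+O\!\left(\frac{t}{s\log^2(1/s)}\right)
\qquad(\rho\le s\le s_{D,K}),
\]
with the transition of $F$ to $0$ as $s\uparrow\rad K$ governed by the uniformizing map and contributing $O(t)$ in all, where $s_{D,K}=\psi'(0)\,t\,[1+O(t)]$ is the conformal type. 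Integrating,
\[
\Lambda(K,\p D;\Outer_\rho)
=\int_\rho^{s_{D,K}}\frac{ds}{s\log(1/s)}+O(t)
=\log\log(1/\rho)-\log\log(1/s_{D,K})+O(t),
\]
and letting $\rho\downarrow0$ yields $\Lambda^*(K,\p D)=-\log\log(1/s_{D,K})+O(t)=-\log\log(1/(\psi'(0)\,t))+O(t)$.

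I expect the genuine difficulties to be threefold. First, the coefficient $1$ of the logarithmic term is forced---it must equal the one appearing in Theorem~\ref{main}, which is itself fixed by the normalization $m(0;\Half,\Half\cap\D)=1$ and the $1/\pi$ in~\eqref{march1.1}---and tracking it through the various $\pi$'s and factors of $2$ requires care. Second, and most importantly, the estimate claims precision $O(t)$ rather than the $O(\delta_t)$ a crude argument gives: this needs the error terms $O(s)$, $O(t)$ in the bubble-mass estimate to be genuinely of that order (hence the $1/\log^2$ rather than $1/\log$ in the displayed error, so that the $s$-integral converges to within $O(t\delta_t)=O(t)$), which in the regime $s\asymp t$ is obtained not from Lemma~\ref{L2.8Str} but by sandwiching $D\sm K$ between round annuli of modulus $\psi'(0)\,t\,[1\pm O(t)]$ exactly as in the proofs of Lemmas~\ref{exc-est} and~\ref{NL}. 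Third, one must check that the ``macroscopic escape'' factors---the cost of leaving $D$ after reaching $C_{1/2}$, which genuinely depends on the shape of $D$ and not merely on $\psi'(0)$---influence the final answer only at order $O(t)$ beyond their effect on the conformal radius $1/\psi'(0)$; this amounts to comparing $D$ with the round disk of the same conformal radius at the origin via conformal invariance of $\loopm$, controlling the distortion by Proposition~\ref{growthdist} and Lemma~\ref{NL}. It is this sharpness, not needed for Theorem~\ref{slethm}, that makes the argument delicate.
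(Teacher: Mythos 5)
Your setup (the closest-point decomposition of $\Lambda(K,\p D;\Outer_\rho)$ over the radius $s$ of closest approach) is legitimate, but the central displayed estimate for the averaged bubble mass is false in the critical range $s\asymp t$, and this breaks the strategy. Rooting each loop at its closest point to the origin places the root at radius $s\le\rad K\asymp t$, i.e.\ at distance comparable to $\diam K$ from $K$; from there the bubble mass of loops forced to hit $K$ depends at order one on the \emph{shape} of $K$, not merely on its capacity. Concretely, take $D=\D$ and $K=[-2t,2t]$, so $\ccap K=\log t$, $s_{D,K}=t\,[1+O(t)]$ and $\rad K=2t$. For $s=1.5t$ and $\theta=\pi/2$, a fraction of the bubbles rooted at $1.5ti$ in $\Outer_s$ that reach $C_1$ hit the residual segments $[\pm1.5t,\pm2t]$, and this fraction is bounded away from both $0$ and $1$; hence
\[
\int_{s_{D,K}}^{\rad K}\Bigl[\tfrac1\pi\int_0^{2\pi}F(s,\theta)\,s\,d\theta\Bigr]\,ds\;\asymp\;\frac1{\log(1/t)},
\]
which is $\Theta(\delta_t)$, not the claimed $O(t)$. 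Since the total must come out to $\log\log(1/\rho)-\log\log(1/s_{D,K})+O(t)$, your main estimate must then fail by a compensating $\Theta(\delta_t)$ on $(\rho,s_{D,K})$ --- and indeed it does: for $s$ slightly below $s_{D,K}$ an order-one fraction of the bubbles reaching $C_1$ avoid the segment entirely, so the true relative error there is $\Theta(1)$, not the $O(t\delta_s)$ you assert. The two defects cancel only in aggregate, and your argument supplies no mechanism for that cancellation. The proposed repair --- sandwiching $D\sm K$ between round annuli of modulus $\psi'(0)\,t\,[1\pm O(t)]$ --- cannot help: sandwiching controls harmonic measure as seen from distance $\gg t$, but the discrepancy between a segment and a disk of equal capacity, seen from distance $\asymp t$, is of order one.

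The paper sidesteps this by rooting each loop at its \emph{farthest} point from the origin, via~\eqref{furthestroot}: since every relevant loop meets $C_1$, the root sits at radius $r\ge1$, far from $K$, where Lemma~\ref{exc-est} expresses $\exc_{\D_r\sm K}(re^{i\theta},K)$ through $\ccap K$ alone with relative error $O(t/r)$, integrating to $O(t)$. (It first splits $\Lambda(K,\p D;\cdot)=\Lambda(K,C_1;\cdot)-\Lambda(K,C_1;D\cap\cdot)$, and in the second piece replaces $K$ by $\overline\D_t$ --- legitimate there precisely because the roots are far from $K$ --- before passing to the closest-point formula~\eqref{march1.1}, whose roots at radius $r<t$ are then far from the targets $C_1$ and $\p D$.) To salvage your route you would have to show that the shape-dependent contributions at scale $t$ cancel to order $O(t)$, which is essentially as hard as the proposition itself.
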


The proof is independent of any SLE notions, and we defer
it to Sect.~\ref{subs:lmstar-est}.

We can now estimate the density of radial SLE from the
interior with respect to whole-plane SLE.  Recall the notation 
$\delta_t = 1/\log(1/t)$.

\begin{proposition}
\label{prop:density-estimate}
Let $D \in \domain$ and let $w$
 be a smooth boundary point of $  D$.
Let $\psi$ be a conformal transformation
 from $D$ onto $\D$ fixing $0$. 
Let $\tau$ be a stopping time for whole-plane SLE such that 
with probability $1$, $0<\tau\le t$.
Then, as $t\to 0$,
\begin{equation}
\label{eq:density-estimate}
\frac{d\mu_D(0,w)}{d\mu_\C(0,\infty)}(\gamma_\tau)
= |\psi'(0)|^\btilde \, |\psi'(w)|^b \, \left[1+O(\delta_t)
 \right].
\end{equation}
\end{proposition}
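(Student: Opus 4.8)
The plan is to evaluate the three factors in the definition~\eqref{definition} of $\mu_D^\tau(0,w)$ separately, using the estimates assembled in Section~\ref{sec:prelim}, and then multiply. Write $\gamma=\gamma_\tau$ and recall $\ccap\gamma=\log\tau$ with $\rad\gamma\asymp\tau\le t$. First I would handle the loop-measure factor: by Proposition~\ref{good-lmstar-estimate} applied to the hull $K=\gamma$ (of capacity $\log\tau$) in the domain $D\in\domain$,
\[
\Lambda^*(\gamma,D^c)=-\log\log(1/\psi'(0)\tau)+O(\tau)=-\log\log(1/\psi'(0)\tau)+O(\delta_t),
\]
so that $\exp\{\tfrac{\cc}{2}\Lambda^*(\gamma,D^c)\}=[\log(1/\psi'(0)\tau)]^{-\cc/2}\,[1+O(\delta_t)]=\delta_{\psi'(0)\tau}^{\cc/2}[1+O(\delta_t)]$.

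Next I would compute the ratio of partition functions. The denominator $\tmass_{\C\sm\gamma}(\gamma(\tau),\infty)$ should be handled via the convention $g_\tau'(\infty)=1$ for whole-plane SLE together with conformal covariance~\eqref{confconv}: mapping $\C\sm\gamma$ to $\Outer_\tau$ (or equivalently $\Outer$ after scaling) picks up a factor that is $1+O(\tau)$ by Proposition~\ref{growthdist}, reducing the denominator to $\tmass_{\C\sm\D}(\cdot,\infty)$ up to $1+O(\tau)$. For the numerator $\tmass_{D\sm\gamma}(\gamma(\tau),w)$, one uses the conformal annulus structure: $D\sm\gamma$ is a conformal annulus of type $s=s_{D,\gamma}=\psi'(0)\tau[1+O(\tau)]$ by Lemma~\ref{L2.8Str}, and by Theorem~\ref{anntorad} (specifically~\eqref{nov28.2}, after transporting to $A_{s,1}$ via conformal covariance with the boundary/interior exponents $b$ at $w$ and $\btilde$ at $0$) the partition function equals $c_0\,s^{\btilde-b}\,[\log(1/s)]^{\cc/2}\,[1+O(s^q)]$ times a Jacobian factor $|\psi'(w)|^b$ coming from the conformal covariance rule. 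Collecting the explicit $\tau$-powers, the factors $s^{\btilde-b}$ from the numerator cancel against the normalization $r^{b-\btilde}$ built into $c_1=c_0^{-1}$ (matching Corollary~\ref{convcor}), and the $[\log(1/s)]^{\cc/2}$ cancels the $\delta_{\psi'(0)\tau}^{\cc/2}$ from the loop term. What remains is $c_1\cdot c_0\cdot|\psi'(0)|^{\btilde}\,|\psi'(w)|^b\,[1+O(\delta_t)]=|\psi'(0)|^{\btilde}|\psi'(w)|^b[1+O(\delta_t)]$, which is~\eqref{eq:density-estimate}.

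The bookkeeping of which power of $\tau$ (or $s$) lives in which factor, and verifying that the central-charge powers of $\log(1/s)$ from Theorem~\ref{anntorad} exactly cancel the $\Lambda^*$ contribution from Proposition~\ref{good-lmstar-estimate}, is where one must be careful; but this is forced by the definition of $c_1$ and the fact that radial SLE from the interior has total mass consistent with Corollary~\ref{convcor}. The one genuine subtlety — and the main obstacle — is that $\gamma(\tau)$ is a non-smooth boundary point of both $D\sm\gamma$ and $\C\sm\gamma$, so the partition functions in~\eqref{definition} only make sense as the \emph{ratio} defined in~\eqref{jun3.1}; I would therefore never estimate numerator and denominator in isolation but rather apply a single conformal map $g_\tau$ (whole-plane normalization) to the whole ratio, so that the troublesome tip $\gamma(\tau)$ maps to a smooth point $U_\tau=g_\tau(\gamma(\tau))$ on $C_\tau$, and then invoke Lemma~\ref{L2.8Str} and Proposition~\ref{growthdist} to control the resulting smooth-domain quantities up to multiplicative error $1+O(\delta_t)$. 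The error term is $O(\delta_t)$ rather than $O(t)$ precisely because the dominant fluctuation comes from the $[\log(1/s)]^{\cc/2}$ terms, whose relative error is $O(\delta_t)$ when $s$ is only known up to a multiplicative $1+O(t)$ — a point worth stating explicitly in the writeup.
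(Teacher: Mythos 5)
Your overall strategy coincides with the paper's: estimate the $\lmstar$ factor by Proposition~\ref{good-lmstar-estimate}, remove the non-smooth tip by applying $g_\tau$ to the whole ratio of partition functions, uniformize $g_\tau(D)$ onto $A_{s,1}$ with $s=|\psi'(0)|\,\tau\,[1+O(\tau)]$, and invoke~\eqref{nov28.2}. However, the one place where you commit to explicit bookkeeping has a genuine gap: when transporting $\tmass_{D\sm\gamma_\tau}(\gamma(\tau),w)$ to $A_{s,1}$ you record only the Jacobian factor $|\psi'(w)|^{b}$ at $w$ and nothing at the tip. Tracing your stated factors through to the end gives
\[
c_1\,[\log(1/s)]^{-\cc/2}\cdot
\frac{c_0\,s^{\btilde-b}\,[\log(1/s)]^{\cc/2}\,|\psi'(w)|^{b}}{\tau^{\btilde-b}}
=|\psi'(0)|^{\btilde-b}\,|\psi'(w)|^{b}\,[1+O(\delta_t)],
\]
which is off from~\eqref{eq:density-estimate} by a factor $|\psi'(0)|^{b}$. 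The missing ingredient is the derivative of the uniformizer $\rho:g_\tau(D)\to A_{s,1}$ at the image $U_\tau=g_\tau(\gamma(\tau))$ of the tip: one needs $|\rho'(U_\tau)|^{b}=|\psi'(0)|^{b}\,[1+O(t)]$, which combines with the $|\psi'(0)|^{\btilde-b}$ surviving from $s^{\btilde-b}$ to produce $|\psi'(0)|^{\btilde}$. This estimate does not follow from Lemma~\ref{L2.8Str} or Proposition~\ref{growthdist}; it is precisely what Proposition~\ref{outerboundary} supplies (applied to the inner transformation $z\mapsto(\tau/s)\,\rho(z)$ of $g_\tau(D)$ fixing the circle $C_\tau$), and you never invoke it. Appealing instead to consistency of the total mass to force the answer would be circular, since the total mass of radial SLE from the interior is deduced \emph{from} this proposition.

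Two smaller corrections to the same bookkeeping. The power $\tau^{b-\btilde}$ is not ``built into $c_1$'': $c_1=c_0^{-1}$ is a pure constant in Definition~\ref{radfromint}, and the denominator does not reduce to $\tmass_{\C\sm\D}(\cdot,\infty)$ up to $1+O(\tau)$; rather $\tmass_{\C\sm\gamma_\tau}(\gamma(\tau),\infty)$ becomes, after applying $g_\tau$, exactly $\tmass_{\Outer_\tau}(U_\tau,\infty)=\tau^{\btilde-b}$, and it is this explicit power that cancels the $\tau^{\btilde-b}$ inside $s^{\btilde-b}$. Finally, the proposition concerns a stopping time $\tau\le t$, so one should note that all the estimates are uniform in the curve and monotone in the capacity (so that $O(\delta_\tau)=O(\delta_t)$); with Proposition~\ref{outerboundary} inserted at the tip, the rest of your argument then matches the paper's proof.
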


\begin{proof}
Let $t<1/4$, so that whole-plane SLE from~$0$
to~$\infty$ does not leave~$D$ by time~$t$.
By definition
\[ \frac{d\mu_D(0,w)}{d\mu_\C(0,\infty)} (\gamma_t)
  = c_1 \,  \exp\Bigl\{\frac\cc2\Lambda^*(\gamma_t,D^c)\Bigr\}
  \, \frac{\tmass_{D \setminus \gamma_t}(\gamma(t),w)}
     {\tmass_{\C \setminus \gamma_t}(\gamma(t),\infty)}. \]
It follows from Proposition~\ref{good-lmstar-estimate} that
\[
\exp\Bigl\{\frac\cc2\Lambda^*(\gamma_t,D^c)\Bigr\}
=
[\log(1/t)]^{-\cc/2} \,\left[1+ O(\delta_t)\right].
\]

Recall the conformal map $g_t:\C\sm\gamma_t\to\Outer_t$, which
satisfies $g_t(\infty)=\infty$ and $g_t'(\infty)=1$.
If~$U_t:=g_t(\gamma(t))$, we have
\[ 
 \frac{\tmass_{D \setminus \gamma_t}(\gamma(t),w)}
     {\tmass_{\C \setminus \gamma_t}(\gamma(t),\infty)}
 =
\frac{|g_t'(w)|^b \, \tmass_{g_t(D)}(U_t,g_t(w))}
     {\tmass_{\Outer_t}(U_t,\infty)}.\]
Since $|g_t'(w)| = 1 +O(t)$ by Proposition~\ref{growthdist}
and ${\tmass_{\Outer_t}(U_t,\infty)} = t^{\tilde b - b}$,
we get
 \[ \frac{d\mu_D(0,w)}{d\mu_\C(0,\infty)} (\gamma_t)
 = c_1 \, [\log(1/t)]^{-\cc/2}  \, t^{b-\tilde b}\,
  \tmass_{g_t(D)}(U_t,g_t(w))\,
 [1+O(\delta_ t)] .\]
 
Let $\rho:g_t(D) \rightarrow A_{s,1}$ be a conformal
transformation taking $g_t(\p D)$ to~$C_1$.
Then $\rho \circ g_t$ is a conformal transformation of
$D \setminus \gamma_t$ onto $A_{s,1}$.
By Lemma~\ref{L2.8Str} we know that
\[
s = |\psi'(0)|\, t \, [1 + O(t)].
\]
By Proposition~\ref{outerboundary} applied to $\rho\circ
g_t\circ \psi\inv$ and $z\mapsto(t/s)\,\rho(z)$
respectively,
\begin{align*}
|\rho'(g_t(w))| &= |\psi'(w)| \, [1 + O(t)],\\
|\rho'(U_t)| &= |\psi'(0)| \, [1+O(t)].
\end{align*}
Hence, 
\[
\tmass_{g_t(D)}(U_t,g_t(w)) = [1 + O(t)]\,
|\psi'(0)|^b  \,  |\psi'(w)|^b \,
\tmass_{A_{s,1}}(\rho(U_t),\rho \circ g_t(w)). 
\]
Using \eqref{nov28.2} we see that
\[
\tmass_{A_{s,1}}(\rho(U_t),\rho \circ g_t(w)) 
= [1+ O(\delta_t)]\,
c_0 \, |\psi'(0)|^{\btilde - b} \, t^{\btilde - b} \, [\log (1/t)]^{
\cc/2}.
\] 
Combining all of these estimates, we have the proposition.
\maybeqed\end{proof}

Taking expectations in~\eqref{eq:density-estimate} and then
comparing with the conformal covariance
rule~\eqref{confconv}, we get the following
corollary.

\begin{corollary}
The partition function for radial SLE from the interior 
is finite and equals
\[
\tmass_D(0,w) = |\psi'(0)|^\btilde \, |\psi'(w)|^b.
\]
Hence, it agrees with the partition function $\tmass_D(w,0)$
of ordinary radial SLE.
\end{corollary}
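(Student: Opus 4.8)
The plan is to integrate the density estimate of Proposition~\ref{prop:density-estimate} against whole-plane \SLEk\ and then send a parameter to its limit. Fix a deterministic time $t<1/4$; then the whole-plane \SLEk\ curve from~$0$ to~$\infty$ satisfies $\rad\gamma_t\le4t<1=\dist(0,\p D)$, so $\gamma_t\subset D$ almost surely, and we may regard both $\mu_D(0,w)$ and $\mu_\C(0,\infty)$ as measures on the initial segment~$\gamma_t$. By the consistency of radial SLE from the interior across stopping times established above, the total mass of $\mu_D(0,w)$ restricted to~$\gamma_t$ is $\tmass_D(0,w)=\|\mu_D(0,w)\|$, and since $\mu_\C(0,\infty)$ is a probability measure,
\[
\tmass_D(0,w)
= \int \frac{d\mu_D(0,w)}{d\mu_\C(0,\infty)}(\gamma_t)\,d\mu_\C(0,\infty)(\gamma_t).
\]

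I would then apply Proposition~\ref{prop:density-estimate} with $\tau=t$. The one thing that needs checking---and it is the crux of the argument---is that the error term $O(\delta_t)$ there is uniform over the initial segments~$\gamma_t$, with implied constant depending only on~$D$, $w$ and~$\kappa$. This is visible from that proposition's proof: each of its ingredients (Proposition~\ref{good-lmstar-estimate}, whose bound is in fact universal; Proposition~\ref{growthdist}; Lemma~\ref{L2.8Str}; Proposition~\ref{outerboundary}; and~\eqref{nov28.2}) contributes an error whose implied constant does not depend on the particular path. Consequently the integrand above equals $|\psi'(0)|^\btilde\,|\psi'(w)|^b\,[1+O(\delta_t)]$ pathwise with a path-independent constant, and integrating against the probability measure $\mu_\C(0,\infty)$ gives
\[
\tmass_D(0,w) = |\psi'(0)|^\btilde\,|\psi'(w)|^b\,[1+O(\delta_t)].
\]
The left-hand side is independent of~$t$ while $\delta_t\to0$ as $t\to0$, so letting $t\downarrow0$ forces the exact identity $\tmass_D(0,w)=|\psi'(0)|^\btilde\,|\psi'(w)|^b$, which is in particular finite.

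For the final assertion I would compare with ordinary radial \SLEk\ directly through conformal covariance, without re-deriving anything. Applying~\eqref{confconv} to $\psi:D\to\D$, with the boundary point $w$ carrying exponent~$b$ and the interior point~$0$ carrying exponent~$\btilde$, gives $\tmass_D(w,0)=|\psi'(w)|^b\,|\psi'(0)|^\btilde\,\tmass_\D(\psi(w),0)$, and $\tmass_\D(\psi(w),0)=\tmass_\D(1,0)=1$ by rotational symmetry of the disk (rotations have unit derivative) together with the chosen normalization. Hence $\tmass_D(w,0)=|\psi'(w)|^b\,|\psi'(0)|^\btilde=\tmass_D(0,w)$. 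The main obstacle in all of this is the uniformity claim in the second paragraph: everything else is bookkeeping, and indeed it is for the sake of that uniformity that Proposition~\ref{good-lmstar-estimate} was established with an explicit $O(t)$ bound rather than merely as a limit.
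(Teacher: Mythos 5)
Your proposal is correct and is essentially the paper's own argument: the paper proves this corollary in one line by ``taking expectations in~\eqref{eq:density-estimate}'' (against the whole-plane SLE probability measure, using the $t$-independence of the total mass to let $t\downarrow0$) ``and then comparing with the conformal covariance rule~\eqref{confconv}.'' Your added remarks on the path-uniformity of the $O(\delta_t)$ error are a correct and worthwhile elaboration of what the paper leaves implicit.
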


\subsection{Agreement with reversed radial SLE}
\label{sec:agreereversed}

\begin{proposition}[Domain Markov property]
\label{prop:dom-mark-prop}
Let $\gamma$ be radial SLE from the interior,
following the law $\mu_D^\#(0,w)$.
Let $\tau$ be a stopping time as in Definition~\ref{radfromint}.
Conditional on the starting segment $\gamma_\tau$, 
the remainder of $\gamma$ has the law of
crossing annulus SLE $\mu_{D\sm\gamma_\tau}^\#(\gamma(\tau),\infty)$
in the slit domain.
\end{proposition}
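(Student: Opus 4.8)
The plan is to read the statement off the defining density~\eqref{definition}, the domain Markov property of whole-plane SLE, and Proposition~\ref{pfprop}; most of the computation is already present in the proof, above, that $\mu_D^T(0,w)$ is consistent across stopping times. Since $\mu_D(0,w)$, and hence $\mu_D^\#(0,w)$, is only pinned down through its restrictions to initial segments $\gamma_T$ with $T\ge\tau$ a stopping time and $\gamma_T\subset D$ almost surely, I would first reduce to the following: for each such $T$, conditionally on $\gamma_\tau$, the law of $\gamma_T$ under $\mu_D^\#(0,w)$ equals the restriction to $\gamma_T$ of crossing annulus SLE $\mu^\#_{D\sm\gamma_\tau}(\gamma(\tau),w)$. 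Letting $T$ vary and using consistency across such stopping times --- of $\mu_D(0,w)$ (already shown) and of crossing annulus SLE (its own domain Markov property) --- then gives the claim for the whole remainder.

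Fix one such $T$. First, exactly as in that consistency argument, use \eqref{definition} and the additivity of the normalized loop measure to factor
\[
\frac{d\mu_D^T(0,w)}{d\mu_\C(0,\infty)}(\gamma_T)=X\,Y,
\qquad
X=c_1\exp\Bigl\{\frac\cc2\lmstar(\gamma_\tau,D^c)\Bigr\},
\]
where $X$ is $\F_\tau$-measurable, while
\[
Y=\exp\Bigl\{\frac\cc2\lm(\gamma_T,D^c;\gamma_\tau^c)\Bigr\}\,
\frac{\tmass_{D\sm\gamma_T}(\gamma(T),w)}{\tmass_{\C\sm\gamma_T}(\gamma(T),\infty)}.
\]
Second, by the domain Markov property of whole-plane SLE, conditionally on $\F_\tau$ the remainder of $\gamma$ under $\mu_\C(0,\infty)$ is radial SLE $\mu^\#_{\C\sm\gamma_\tau}(\gamma(\tau),\infty)$ in the slit domain (viewed on the Riemann sphere). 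Third, apply Proposition~\ref{pfprop} with simply connected domain $\C\sm\gamma_\tau$, conformal annulus $D\sm\gamma_\tau$, marked points $\gamma(\tau)$ on the slit, $w$ on $\p D$, and $\infty$ interior, and stopping time $T$: this identifies $Y$, conditionally on $\F_\tau$, with the Radon--Nikodym derivative on $\gamma_T$ of $\mu_{D\sm\gamma_\tau}(\gamma(\tau),w)$ with respect to $\mu_{\C\sm\gamma_\tau}(\gamma(\tau),\infty)$. Combining the three facts, conditionally on $\F_\tau$ the law of $\gamma_T$ under $\mu_D(0,w)$ is the $\F_\tau$-measurable constant $X$ times the restriction to $\gamma_T$ of $\mu_{D\sm\gamma_\tau}(\gamma(\tau),w)$; normalizing to probability measures removes $X$ and $c_1$ and leaves $\mu^\#_{D\sm\gamma_\tau}(\gamma(\tau),w)$ restricted to $\gamma_T$, which is exactly the desired conditional law.

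I do not expect a serious obstacle; the step needing the most care is essentially bookkeeping. Because neither $\mu_D(0,w)$ nor its $\gamma_\tau$-conditional versions are measures on completed curves --- they are only consistent families of measures on initial segments $\gamma_T$ with $\gamma_T\subset D$ --- the Markov property must be stated and proved as an identity of conditional laws of $\gamma_T$ for each admissible $T$, with consistency across $T$ then patching these identities together on both sides. A secondary routine check is that Proposition~\ref{pfprop} really applies in this configuration: $\gamma_\tau\subset D$ keeps $\gamma_\tau$ off $\p D$, so that $D\sm\gamma_\tau$ is a conformal annulus having $\p D$ as one boundary component (with $\gamma_\tau$, after passing to the sphere, the other), and $\gamma_T\subset D$ together with the simplicity of the curve when $\kappa\le4$ keeps the remainder inside that annulus up to time $T$.
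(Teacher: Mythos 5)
Your proposal is correct and follows essentially the same route as the paper: factor the defining density via the loop-measure decomposition into an $\F_\tau$-measurable part and a remainder, invoke the domain Markov property of whole-plane SLE to identify the conditional base measure as radial SLE in $\C\sm\gamma_\tau$, and apply Proposition~\ref{pfprop} to recognize the remaining factor as the density of crossing annulus SLE, with normalization killing the $\F_\tau$-measurable prefactor. Your extra care about stating the result as a compatible family of identities over stopping times $T$, and your writing the target as $\mu^\#_{D\sm\gamma_\tau}(\gamma(\tau),w)$ (which is what the paper's own proof concludes), are both fine.
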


\begin{proof}
Let $T\ge\tau$ be a stopping time as in Definition~\ref{radfromint}.
By the domain Markov property for a whole-plane SLE curve $\gamma$
following the law $\mu_\C(0,\infty)$, 
conditional on~$\F_\tau$, 
the remainder of~$\gamma$ 
has the law of radial SLE 
$\mu^\#_{\C\sm\gamma_\tau}(\gamma(\tau),\infty)$
in the slit domain.

Conditional on~$\F_\tau$,
the density of $\mu_D(0,w)$ on $\gamma_T$ with
respect to whole-plane SLE $\mu_\C(0,\infty)$ 
(i.e., with respect to the measure 
$\mu^\#_{\C\sm\gamma_\tau}(\gamma(\tau),\infty)$) is
\begin{multline*}
\frac{d\mu_D(0,w)}
     {d\mu_\C(0,\infty)}(\gamma_T)
\biggm/
\frac{d\mu_D(0,w)}
     {d\mu_\C(0,\infty)}(\gamma_\tau)
\\ =
\exp\Bigl\{\frac\cc2
  \Lambda(\gamma_T,\partial D;\C\sm\gamma_\tau)\Bigr\}
\frac{\tmass_{D\sm\gamma_T}(\gamma(T),w)}
     {\tmass_{\C\sm\gamma_T}(\gamma(T),\infty)}
\biggm/
\frac{\tmass_{D\sm\gamma_\tau}(\gamma(\tau),w)}
     {\tmass_{\C\sm\gamma_\tau}(\gamma(\tau),\infty)}.
\end{multline*}
By Proposition~\ref{pfprop}, we know that
this quantity is the density of crossing annulus SLE 
$\mu_{D\sm\gamma_\tau}^\#(\gamma(\tau),w)$
with respect to radial SLE
$\mu_{\C\sm\gamma_\tau}^\#(\gamma(\tau),\infty)$,
up to time~$T$. 
\maybeqed\end{proof}

\begin{proposition}
\label{prop:cont-term-pt}
The definition of radial SLE from the interior gives a
random curve $\gamma:(0,T_D)\to D$ with $\gamma(0+)=0$ and
$\gamma(T_D-)=w$, where $T_D$ is the random time at which
$\gamma$ leaves $D$.
\end{proposition}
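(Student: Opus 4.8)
The argument splits into two parts: the behaviour of $\gamma$ near the starting point $0$, and its behaviour at the exit time $T_D$. The first part is essentially immediate. By Definition~\ref{radfromint}, $\mu_D(0,w)$ is obtained from whole-plane SLE $\mu_\C(0,\infty)$ by a reweighting that is consistent across stopping times and has finite total mass $\tmass_D(0,w)$; in particular, on any initial segment $\gamma_\tau$ with $\gamma_\tau\subset D$ it is absolutely continuous with respect to whole-plane SLE, which is supported on continuous curves, so $\gamma$ is a continuous curve on $(0,T_D)$. Since we use the capacity parametrization $\ccap\gamma_t=\log t$, we have $\rad\gamma_t\le4t$, hence $|\gamma(t)|\le 4t\to0$ and $\gamma(0+)=0$. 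Finally, a whole-plane SLE curve is unbounded, so it leaves the bounded domain $D$; thus $T_D<\infty$ and, by continuity of $\gamma$, the limit $\gamma(T_D-)$ exists and lies on $\p D$.

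\emph{Reduction to crossing annulus SLE.} It remains to show this exit point is almost surely $w$. I would choose stopping times $\tau_k\uparrow T_D$ with $\tau_k<T_D$, for instance $\tau_k=\inf\{t:\dist(\gamma(t),\p D)<1/k\}$, which for $k$ large is well defined, has $\gamma_{\tau_k}\subset D$, and increases to $T_D$ because $\dist(\gamma(\cdot),\p D)$ is continuous and vanishes first at $T_D$. For each such $k$, $D\sm\gamma_{\tau_k}$ is a conformal annulus with outer boundary $\p D$ and inner boundary the slit $\gamma_{\tau_k}$, and $w$ is a smooth boundary point of it (a neighbourhood of $w$ in $D$ avoids the compact set $\gamma_{\tau_k}$). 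By the domain Markov property, Proposition~\ref{prop:dom-mark-prop}, conditionally on $\F_{\tau_k}$ the remainder of $\gamma$ is crossing annulus SLE in $D\sm\gamma_{\tau_k}$ from $\gamma(\tau_k)$ to $w$. A crossing annulus SLE curve stays in the open annulus until it first meets the opposite boundary component, so $T_D$ is precisely the time this crossing curve reaches $\p D$, and the proposition follows once we know that crossing annulus SLE terminates at its target boundary point $w$.

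\emph{Terminal-point continuity.} I would establish this in whichever of two ways fits best with the surrounding development. One route combines the domain Markov property with the convergence of annulus SLE to radial SLE: applying Proposition~\ref{prop:dom-mark-prop} with $\tau$ chosen so that $\ccap\gamma_\tau=\log t$, using reversibility of annulus SLE to write $\mu^\#_{D\sm\gamma_t}(\gamma(t),w)$ as the reversal of $\mu^\#_{D\sm\gamma_t}(w,\gamma(t))$, and letting $t\downarrow0$ via Corollary~\ref{cor:gamma-convergence} (whose convergence is uniform in $\gamma_t$), one identifies $\mu^\#_D(0,w)$ with the reversal of radial SLE $\mu^\#_D(w,0)$; the behaviour at $0$ and at $T_D$ then transfers from the continuity of $\mu^\#_D(w,0)$ at its terminal point~\cite{Lradcont} and at its initial point (automatic), and along the way one obtains the agreement with reversed radial SLE. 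A second, purely local route exploits that $w$ is a smooth (locally analytic) boundary point: in a chordal coordinate near $w$, the crossing annulus SLE curve above, stopped when it first enters a small disk about $w$, is mutually absolutely continuous with chordal SLE aimed at $w$, with Radon--Nikodym derivative controlled by Proposition~\ref{scpfprop} and the boundary perturbation rule~\eqref{bpert}; since chordal SLE almost surely terminates at its target~\cite{RS}, so does the crossing annulus SLE curve.

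\emph{Main obstacle.} Everything except the terminal-point continuity of crossing annulus SLE is bookkeeping. In the first route the real content is packaged in Corollary~\ref{cor:gamma-convergence} (hence ultimately in Theorem~\ref{anntorad} and~\cite{Lradcont}) and in the reversibility of annulus SLE; in the second route one must verify the uniform boundedness of the partition-function ratio and of the Brownian-loop-measure term as the crossing curve approaches $w$, for which one uses the analyticity of $\p D$ near $w$ together with conformal-mapping estimates of the type developed in Section~\ref{sec:prelim}.
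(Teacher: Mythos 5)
Your second route is essentially the paper's proof, and the paper's version is simpler than you make it: by its definition via the boundary perturbation rule~\eqref{bpert} (with $t=\infty$), crossing annulus SLE is absolutely continuous with respect to chordal SLE as a measure on \emph{entire} curves, not merely on initial segments stopped before reaching~$w$; since chordal SLE is continuous up to its terminal point, so is crossing annulus SLE, and no local estimates near~$w$ (uniform boundedness of the partition-function ratio or of the loop-measure term) are required. This distinction matters, because mutual absolute continuity only up to stopping times strictly preceding the terminal time would not by itself transfer the terminal behaviour --- so if you take the local version of your second route you are left with exactly the verification you flag, whereas the global version makes it vacuous. The paper also dispenses with your sequence $\tau_k\uparrow T_D$: a single deterministic $t<\dist(0,\p D)/4$ already has $\gamma_t\subset D$ almost surely (since $\rad\gamma_t\le4t$), and one application of Proposition~\ref{prop:dom-mark-prop} reduces the whole statement to the terminal continuity of a single crossing annulus SLE. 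Your first route (identifying $\mu^\#_D(0,w)$ with the reversal of radial SLE and importing continuity from~\cite{Lradcont}) also works and is essentially the paper's proof of Proposition~\ref{probrev}, but it is heavier than necessary for this statement, which the paper proves before, and independently of, the reversal identification.
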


\begin{proof}
Let $t$ be small enough that $\gamma_t\subset D$ deterministically.
By Proposition~\ref{prop:dom-mark-prop}, conditional on $\gamma_t$,
the remainder of $\gamma$ is crossing annulus SLE in $D$ to $w$.
Because crossing annulus SLE is defined to be absolutely continuous
with respect to chordal SLE, it is continuous up to its terminal
point, and therefore $\gamma(T_D-)=w$.
\maybeqed\end{proof}

\begin{proposition}
\label{probrev}
As a probability measure, radial SLE from the interior
$\mu^\#_D(0,w)$ is the reversal of radial SLE $\mu^\#_D(w,0)$.
\end{proposition}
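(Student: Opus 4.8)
The plan is to take a curve $\gamma$ distributed as $\mu^\#_D(0,w)$, parametrized by capacity so that $\ccap\gamma_t=\log t$, and to identify the law of its time-reversal $\tilde\gamma$ with $\mu^\#_D(w,0)$. The strategy rests on three ingredients: the domain Markov property for radial SLE from the interior (Proposition~\ref{prop:dom-mark-prop}), the reversibility of crossing annulus SLE, and the convergence of crossing annulus SLE to radial SLE (Corollary~\ref{cor:gamma-convergence}). The point is that conditioning on a short initial slit of $\gamma$ exposes a crossing annulus SLE, which we may reverse, and then shrink the slit down to the point $0$.

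First I would fix $t$ small enough that $\gamma_t\subset D$ with probability one. By Proposition~\ref{prop:dom-mark-prop}, conditionally on $\gamma_t$ the remainder of $\gamma$ has the law $\mu^\#_{D\sm\gamma_t}(\gamma(t),w)$ of crossing annulus SLE in the annular domain $D\sm\gamma_t$ from the slit tip $\gamma(t)$ to $w$. Since $\kappa\le4$ this curve is simple and, being locally absolutely continuous with respect to chordal SLE, meets $\gamma_t$ only at its starting point; hence the reversed curve $\tilde\gamma$, stopped at the first time $\sigma$ that it meets $\overline{\gamma_t}$, is, conditionally on $\gamma_t$, the time-reversal of $\mu^\#_{D\sm\gamma_t}(\gamma(t),w)$. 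Invoking the reversibility of crossing annulus SLE---which is conformally invariant, so that it applies in the slit domain $D\sm\gamma_t$ with the slit tip as an endpoint; see~\cite{ZhanWP}---this time-reversal has the law $\mu^\#_{D\sm\gamma_t}(w,\gamma(t))$.

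Next I would let $t\downarrow0$. By Corollary~\ref{cor:gamma-convergence} the measures $\mu^\#_{D\sm\gamma_t}(w,\gamma(t))$ converge weakly to $\mu^\#_D(w,0)$, \emph{uniformly in $\gamma$}, so the convergence survives the conditioning on $\gamma_t$: for every bounded continuous functional $F$ on curves,
\[
\E\bigl[F(\tilde\gamma[0,\sigma])\bigr]
=\E\bigl[\,\E[F(\tilde\gamma[0,\sigma])\mid\gamma_t]\,\bigr]
=\E\Bigl[\int F\,d\mu^\#_{D\sm\gamma_t}(w,\gamma(t))\Bigr]
\;\longrightarrow\;\int F\,d\mu^\#_D(w,0).
\]
Meanwhile the piece of $\tilde\gamma$ that has been discarded is the reversal of $\gamma_t$, which lies in $\{|z|\le4t\}$ because $0\in\gamma_t$ and $\rad\gamma_t\le4t$; so $\tilde\gamma[0,\sigma]\to\tilde\gamma$ in the curve topology as $t\downarrow0$, using the continuity $\gamma(0+)=0$ from Proposition~\ref{prop:cont-term-pt}. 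Hence $\E[F(\tilde\gamma[0,\sigma])]\to\E[F(\tilde\gamma)]$, and combining this with the display yields $\E[F(\tilde\gamma)]=\int F\,d\mu^\#_D(w,0)$ for every such $F$, that is, $\tilde\gamma\sim\mu^\#_D(w,0)$.

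I expect the main obstacle to be the limiting argument: one must be sure that conditioning on the random slit $\gamma_t$ does not spoil the convergence of the conditional law of the reversed curve---this is precisely where the uniformity in Corollary~\ref{cor:gamma-convergence} is indispensable---and one must check that deleting the vanishingly small slit really does approximate the full reversed curve, which relies on the capacity bound $\rad\gamma_t\in[t,4t]$ together with the continuity at the endpoint in Proposition~\ref{prop:cont-term-pt}. A secondary subtlety is that reversibility of crossing annulus SLE is used with a slit tip rather than a smooth boundary point as an endpoint; this is harmless, since that statement is conformally invariant and a conformal map carries $D\sm\gamma_t$ to a round annulus sending $\gamma(t)$ to a smooth boundary point.
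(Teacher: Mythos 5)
Your proposal is correct and follows essentially the same route as the paper: condition on the initial segment $\gamma_t$ to expose crossing annulus SLE via the domain Markov property, reverse it using reversibility of annulus SLE, and pass to the limit $t\downarrow0$ via Corollary~\ref{cor:gamma-convergence} (whose uniformity in $\gamma$ is exactly what both arguments need). Your write-up merely spells out in more detail the final limiting step—that the discarded slit of radius $O(t)$ vanishes—which the paper compresses into the remark that the post-$t$ segments converge to $\mu_D^\#(0,w)$ itself.
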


\begin{proof}
Let $\gamma$ be a radial SLE curve from the interior,
following the law $\mu_D^\#(0,w)$.
Conditional on the initial segment $\gamma_t$, the
remainder of the curve has the law of annulus SLE in the
slit domain, $\mu^\#_{D\sm\gamma_t}(\gamma(t),w)$.
Since annulus SLE is reversible, this is the reversal of
$\mu^\#_{D\sm\gamma_t}(w,\gamma(t))$.
By Corollary~\ref{cor:gamma-convergence}, the weak limit of
these measures as $t\to0$, uniformly in~$\gamma$, is the
reversal of $\mu^\#_{D}(w,0)$.
But the weak limit of the measures on post-$t$ segments of
$\gamma$ is $\mu_D^\#(0,w)$ itself, whence the result
follows.
\maybeqed\end{proof}

The non-probability measure for radial SLE from the
interior, $\mu_D(0,w)$, is therefore the reversal of radial
SLE $\mu_D(w,0)$, because the corresponding partition
functions and probability measures agree.
This concludes the proof of Theorem~\ref{slethm}.

\subsection{Multiply connected domains}
\label{sec:multconn}

If $D$ is not simply connected, we can still define
$\mu_D(0,w)$ by~\eqref{definition}, which may be regarded as
bulk/boundary SLE in the terminology of~\cite{L11}.  
For each curve~$\gamma$ from~$0$ to~$w$ in~$D$, 
we may find a simply connected $D_1\subset
D$  agreeing with~$D$ near~$w$.  
Now we use~\eqref{cascade} and~\eqref{loopdecomp} to observe that
\[
\frac{d\mu_D(0,w;D_1)}
     {d\mu_{D_1}(0,w)}(\gamma)
= \exp\Bigl\{ -\frac\cc2 \Lambda(\gamma,D_1^c;D) \Bigr\}.
\]
where $\mu_D(0,w;D_1)$ denotes $\mu_D(0,w)$ restricted to
curves that lie in $D_1$.  But boundary/bulk SLE is defined
in~\cite{L11} to satisfy this restriction rule. 
Moreover, we know that $\mu_{D_1}(0,w)$ is the
reversal of $\mu_{D_1}(w,0)$.  
We conclude that Corollary~\ref{boundarybulkcor} holds and
bulk/boundary SLE is conformally covariant.

\section{Normalizing the Brownian loop measure}
\label{sec:blm}

The aim of this section is to prove Theorem~\ref{main} and Proposition~\ref{good-lmstar-estimate}.

Invariance of
$\Lambda^*$ under M\"obius transformations
implies  that 
  the definition \eqref{feb28.1} does not
change if we shrink down at a point on the
Riemann sphere other than the origin.
  In other words, 
\begin{equation}  \label{feb28.1.B}
 \Lambda^*(V_1,V_2) = \lim_{r \downarrow 0}
             [\Lambda(V_1,V_2;\Outer_r(z)) - \log \log(1/r)],
\end{equation}
\begin{equation}  \label{feb28.1.C}
  \Lambda^*(V_1,V_2) = \lim_{R \rightarrow \infty}
             [\Lambda(V_1,V_2;\Disk_R) - \log \log R].
\end{equation}

Theorem
\ref{main1} establishes the existence
of the limit in \eqref{feb28.1}. Theorem \ref{main2}
proves the alternate forms \eqref{feb28.1.B}
 and \eqref{feb28.1.C}. If
$f$ is a M\"obius transformation, then conformal invariance
of the loop measure implies
\[   \Lambda(V_1,V_2;\Outer_r) = \Lambda(f(V_1),f(V_2);
f(\Outer_r)). \]
Invariance of $\Lambda^*$ under dilations, translations, and
inversions can be deduced from this and \eqref{feb28.1},
\eqref{feb28.1.B}, and \eqref{feb28.1.C}, respectively.

The proof of Theorem~\ref{main} really only uses
 standard arguments about planar Brownian
motion but we need to control the  error terms.
In  order to make the proof easier to understand,
 we have split it
  into three subsections.  The first subsection considers estimates
for planar Brownian motion.  Readers who are well acquainted
with planar Brownian motion may wish to skip this subsection and
refer back as necessary.  This subsection assumes
knowledge of planar Brownian motion as in 
\cite[Chapter 2]{Lconf}.
  The next subsection discusses the
Brownian (boundary)
 bubble measure and gives estimates for it.  The
Brownian loop measure is a measure on unrooted loops, but for
computational purposes it is often easier to associate to
each unrooted loop a particular rooted loop yielding an
expression in terms of Brownian bubbles.  The third subsection
proves the main theorem by giving estimates for the loop measure.
The last subsection proves Proposition~\ref{good-lmstar-estimate}.

\subsection{Lemmas about Brownian motion}  
\label{sec:blmlemmas}

 The exact form of the Poisson kernel in the unit
disk shows that there is
a $c$ such that for all $|z| \leq 1/2$ and $|w| = 1$,
\[    | 2\pi \, h_\Disk(z,w) - 1|  \leq c \, |z| . \]
By taking an inversion, we get that if $|z| \geq 2$, 
\begin{equation}  \label{feb28.7}
    |2\pi \, h_{\Outer}(z,w) -1| \leq
   \frac c{ |z|} . 
\end{equation}
 It is standard that 
\begin{equation} \label{logest}
         h_{A_R}( z, C_R) =
    \frac{\log |z|}{\log R}, \;\;\;\;
     1 < |z| < R. 
\end{equation}
In particular,
\begin{equation} \label{feb27.11}
    \exc_{A_R}(1,C_R) = \frac{1}{\log R} , \;\;\;\;
   \exc_{A_R}(R,C_1) = \frac{1}{R \, \log R}, 
\end{equation}

If $V \subset \p D$ is smooth, let $\overline h_D(z,w;V)
= h_D(z,w)/h_D(z,V)$ for $w \in V$.  In other words,
$\overline h_D(z,w;V)$ is the density of the exit
distribution  of
a Brownian motion {\em conditioned} so that it exits
at $V$.  We similarly define $\overline h_{\p D}
(z,w;V).$

\begin{lemma} There exists $c < \infty$ such that
the following holds.  Suppose $R > 0$ and  $D$ is
a domain with $A_R \subset D \subset \Outer$.  
    Then
\begin{equation}  \label{feb28.2.alt}
 |2 \pi  \, \overline h_{ D}(z,w;C_1) - 1|
     \leq c \, \frac{\log R}{R} ,\;\;\;\;
 |w| = 1, \,z \in D \cap \overline \Outer_{R/2}. 
\end{equation}
\end{lemma}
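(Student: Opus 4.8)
The plan is to compare harmonic measure in $D$ with harmonic measure in the exterior disk $\Outer$, using~\eqref{feb28.7} — which says that $h_\Outer(\,\cdot\,,w)$ is uniformly within $O(1/|\,\cdot\,|)$ of the flat density $1/(2\pi)$ once the modulus is at least $2$ — and invoking the inclusion $A_R\subset D$ only at the very end, for a lower bound on $h_D(z,C_1)$. Throughout I would assume $R$ is large (say $R\ge 4$); this is the only regime in which the stated estimate is meaningful.

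First I would reduce to the case $|z|=R/2$. Since $A_R\subset D$, no point of $\p D$ has modulus in $[R/2,R)$, so a Brownian motion from $z\in D\cap\overline\Outer_{R/2}$ that leaves $D$ before entering $\overline{\Disk}_{R/2}$ must leave it at modulus $\ge R$, in particular not on $C_1$. Hence, writing $\sigma$ for the first hitting time of $\overline{\Disk}_{R/2}$, the strong Markov property gives, for $w\in C_1$,
\[
h_D(z,w)=\E^z\bigl[\mathbf 1\{\sigma<\tau_D\}\,h_D(B_\sigma,w)\bigr],\qquad
h_D(z,C_1)=\E^z\bigl[\mathbf 1\{\sigma<\tau_D\}\,h_D(B_\sigma,C_1)\bigr],
\]
with $|B_\sigma|=R/2$ on the event in question. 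Consequently, if the estimate $|2\pi h_D(z',w)-h_D(z',C_1)|\le\eta\,h_D(z',C_1)$ holds for all $z'$ with $|z'|=R/2$, it holds for all $z\in D\cap\overline\Outer_{R/2}$ by averaging. This reduction is important: without it, the lower bound used below would fail for $z$ near or beyond $C_R$.

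For $|z|=R/2$ I would compare with $\Outer$. Since $D\subset\Outer$ we have $\tau_D\le\tau_\Outer$ and $\{\tau_D=\tau_\Outer\}=\{B_{\tau_D}\in C_1\}$, so decomposing at $\tau_D$ gives, for $w\in C_1$,
\[
h_D(z,w)=h_\Outer(z,w)-\E^z\bigl[\mathbf 1\{\tau_D<\tau_\Outer\}\,h_\Outer(B_{\tau_D},w)\bigr].
\]
On $\{\tau_D<\tau_\Outer\}$ one has $B_{\tau_D}\in\p D\sm C_1$, hence $|B_{\tau_D}|\ge R$, so~\eqref{feb28.7} yields $h_\Outer(B_{\tau_D},w)=\tfrac1{2\pi}+O(1/R)$ uniformly in $w$; likewise $h_\Outer(z,w)=\tfrac1{2\pi}+O(1/R)$. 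Since $\Prob^z\{\tau_D<\tau_\Outer\}=1-h_D(z,C_1)$, substituting gives $2\pi\,h_D(z,w)=h_D(z,C_1)+O(1/R)$, with the error uniform in $w\in C_1$. Finally, because $A_R\subset D$ and $|z|=R/2$, monotonicity of harmonic measure and~\eqref{logest} give $h_D(z,C_1)\ge h_{A_R}(z,C_1)=\tfrac{\log 2}{\log R}$, and dividing the previous identity by $h_D(z,C_1)$ produces $2\pi\,\overline h_D(z,w;C_1)=1+O(\log R/R)$.

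The argument is essentially a short computation with the strong Markov property; the one point needing care is checking that the $O(1/R)$ error in the comparison with $\Outer$ is genuinely uniform in $w$, which is immediate from the uniformity built into~\eqref{feb28.7}. The role of the annulus $A_R$ is precisely to supply the factor $\log R$: the harmonic measure of $C_1$ seen from radius $R/2$ is $\asymp 1/\log R$, and this turns the $O(1/R)$ numerator into the claimed $O(\log R/R)$.
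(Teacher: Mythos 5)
Your proof is correct and follows essentially the same route as the paper's: reduce to $|z|=R/2$, decompose $h_\Outer(z,w)$ at $\tau_D$ using \eqref{feb28.7} to get $2\pi h_D(z,w)=h_D(z,C_1)+O(1/R)$, then divide by the lower bound $h_D(z,C_1)\ge\log 2/\log R$ from \eqref{logest}. The only difference is that you spell out the strong Markov justification for the reduction to $|z|=R/2$, which the paper simply asserts.
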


\begin{remark}  The conclusion of this lemma is very
reasonable.  If a Brownian motion starting at a point $z$
far from the origin exits $  D$ at $C_1$, then
the hitting distribution is almost uniform.  This uses
the fact that $  D \cap \Disk_R$ is the same
as~$A_R$.  The important result is the estimate  of
the error term.
\end{remark}

\begin{proof}  
Assume $|w| = 1$.  Let $\tau = \tau_{ D}$ and let
$\p^* = \p D \cap \Outer$.
  It suffices to prove the
estimate for $|z| = R/2$. 
  For every $|\zeta| \geq R/2$, \eqref{feb28.7} gives
\begin{equation}  \label{mar1.11}
  | 2\pi h_{\Outer}(\zeta,w) - 1 | \leq \frac c R.
\end{equation}
Note that
\[   h_{\Outer}(z,w) =  h_{ D}(z,w) + \E^z[h_\Outer(
B_\tau,w); B_\tau \in \p^*].\]
Using \eqref{mar1.11}, we get
\[  2\pi \E^z[h_\Outer(
B_\tau,w); B_\tau  \in \p^*] = 
h_{ D}(z,\p^*)  
 \,[1 + O(R^{-1})]. \]
Therefore,
\begin{equation}  \label{mar1.12}
  2\pi \,h_{ D}(z,w) = h_{  D}(z,C_1)
  + O(R^{-1}). 
\end{equation}
Since $h_{  D}(z,C_1)$ is bounded below
by the probability of reaching $C_1$ before $C_R$, 
\eqref{logest} implies
\[   h_{  D}(z,C_1)
              \geq \frac{\log 2}{\log R}, \]
and hence \eqref{mar1.12} implies
\begin{equation*}  \label{mar2.1}
 2 \pi \,  h_{  D}(z,w) =  h_{ D}(z,C_1)
  \, \left[ 1 + O\left( \frac {\log R} R \right)
  \right]. \qedhere
\end{equation*}
\end{proof}

\begin{corollary}  There exists $c < \infty$ such
that if $R \geq 2$, $|z| = 1$, $|w| = R$,   then
\begin{equation}  \label{mar2.5}
 \left|h_{\p A_R}(z,w) -\frac{1}{2 \pi R \, \log R}\right|
 \leq \frac{c}{R^2}. 
\end{equation}
\end{corollary}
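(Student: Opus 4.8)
The plan is to deduce the corollary from the estimate \eqref{feb28.2.alt} together with the exact value of the excursion measure recorded in \eqref{feb27.11}. By the symmetry $h_{\p A_R}(z,w)=h_{\p A_R}(w,z)$ of the boundary Poisson kernel, it suffices to estimate $h_{\p A_R}(w,z)$, working at the point $w\in C_R$. This is the convenient choice: \eqref{feb28.2.alt} controls the conditional hitting density on the \emph{inner} circle $C_1$ for a Brownian motion started far from the origin, and the point $w_\epsilon:=w(1-\epsilon/R)$ at distance $\epsilon$ from $w$ along the inward normal to $C_R$ does lie far from the origin ($R/2\le|w_\epsilon|<R$ once $\epsilon\le R/2$), whereas working at $z\in C_1$ would require a density estimate on the outer circle instead.

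First I would express the boundary Poisson kernel as a normal derivative. Since the ordinary Poisson kernel $h_{A_R}(\cdot,z)$ vanishes as one approaches the distinct boundary point $w$, the difference quotient collapses and $h_{\p A_R}(w,z)=\lim_{\epsilon\downarrow0}\epsilon^{-1}h_{A_R}(w_\epsilon,z)$; the inward normal at $w\in C_R$ is $-w/R$, which is where the factor $1/R$ enters.

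Next I would factor this hitting density through the event of exiting $A_R$ on the inner circle:
\[
h_{A_R}(w_\epsilon,z)=h_{A_R}(w_\epsilon,C_1)\,\overline h_{A_R}(w_\epsilon,z;C_1).
\]
Dividing by $\epsilon$ and letting $\epsilon\downarrow0$, the first factor tends to $\exc_{A_R}(w,C_1)$, which by \eqref{feb27.11} and rotational invariance equals $1/(R\log R)$. For the second factor, \eqref{feb28.2.alt} applied with $D=A_R$---the lemma's boundary point on $C_1$ being our $z$, and its interior point being our $w_\epsilon\in A_R\cap\overline\Outer_{R/2}$---gives $|2\pi\,\overline h_{A_R}(w_\epsilon,z;C_1)-1|\le c\,(\log R)/R$, a bound uniform in $\epsilon$. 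Multiplying the two factors and passing to the limit (which exists since $C_R$ is analytic, so a squeeze lets the $\epsilon$-uniform bound survive) yields
\[
\Bigl|h_{\p A_R}(w,z)-\frac1{2\pi R\log R}\Bigr|\le\frac{c}{2\pi R\log R}\cdot\frac{\log R}{R}=\frac{c}{2\pi R^2};
\]
the two occurrences of $\log R$ cancel, which is precisely what produces the clean $O(1/R^2)$ error. Invoking the symmetry once more identifies this with $h_{\p A_R}(z,w)$, proving the corollary.

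I do not expect a serious obstacle: \eqref{feb28.2.alt} does all the work and the value $1/(R\log R)$ is explicit. The only points requiring care are the normal-derivative bookkeeping at $C_R$ (the direction of the inward normal and the attendant factor $1/R$, and the vanishing of $h_{A_R}(\cdot,z)$ at $w$, so that the difference quotient really is just $\epsilon^{-1}h_{A_R}(w_\epsilon,z)$) and the interchange of the $\epsilon$-uniform estimate on $\overline h_{A_R}(w_\epsilon,z;C_1)$ with the limit $\epsilon\downarrow0$, which is immediate once one notes that the difference quotient genuinely converges.
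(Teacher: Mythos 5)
Your proof is correct and is essentially the paper's own argument: both reduce the claim, via the symmetry $h_{\p A_R}(z,w)=h_{\p A_R}(w,z)$, to factoring the excursion density at $w\in C_R$ as the total excursion measure $\exc_{A_R}(w,C_1)=1/(R\log R)$ from \eqref{feb27.11} times a conditional exit density on $C_1$ that \eqref{feb28.2.alt} shows equals $1/(2\pi)+O(\log R/R)$, the two $\log R$ factors cancelling to give the $O(1/R^2)$ error. The paper handles the conditioning by bounding $\overline h_{\p A_R}(w,z)$ between the min and max of $\overline h_{A_R}(\hat w,z;C_1)$ over $|\hat w|=R/2$, rather than via your explicit normal-derivative limit at $w_\epsilon$, but this is only a difference of bookkeeping.
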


\begin{proof}  Recall that $h_{\p A_r}(z,w) =
h_{\p A_r}(w,z)$.
We know from \eqref{feb27.11} that
\[   \int_{C_1} h_{\p A_R}(w,\zeta) \, |d\zeta| =
   \frac{1}{R \log R}. \]
Also, by definition,
\[         h_{\p A_R}(w,z) = \frac{\overline h_{\p A_R}
   (w,z)}{R \,\log R}. \]
Note that $\overline h_{\p A_R}
   (w,z)$ is bounded by the minimum and maximum values
of $\overline h_{A_R}(\hat w,z)$ over $|\hat w| = R/2$,
which by \eqref{feb28.2.alt} satisfy
\[  \overline h_{A_R}(\hat w,z) = \frac 1{2\pi}
  + O\left(\frac{\log R}{R}\right). \qedhere\]
\end{proof}

\begin{lemma} \label{feb28.lemma1}
Suppose $D$ is a nonpolar domain containing
$\Disk$. If $ 0 < s < 1$, let $D_s = D \cap \Outer_s$.
Then if $s < r \leq 1/2$ and $|z| = r$, 
\[    \frac{\log r}
  {\log s} \leq  h_{D_s}(z,C_s) \leq  \frac{\log r}{\log s}
 \, \left[1 - \frac{p\, \log 2}{(1-p) \log (1/r)}\right]^{-1} ,\]
where
\[  p =p_D =  \sup_{|\tilde w| = 1}  
        h_{D_{1/2}}(\tilde w,C_{1/2}) < 1. \]
\end{lemma}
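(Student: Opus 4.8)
The plan is to prove the two inequalities separately. The lower bound is immediate: since $\Disk\subseteq D$ we have $A_{s,1}\subseteq D_s$, and a Brownian motion from $z$ that exits the round annulus $A_{s,1}$ through its inner circle $C_s$ has thereby also exited $D_s$ through $C_s$. Hence $h_{D_s}(z,C_s)\ge h_{A_{s,1}}(z,C_s)=\log r/\log s$, the last equality being \eqref{logest} after rescaling.

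For the upper bound I would run a short bootstrap. Write $u=h_{D_s}(\cdot,C_s)$, harmonic in $D_s$, equal to $1$ on $C_s$ and (by convention) $0$ at regular points of $\partial D_s\sm C_s$; put $M=\sup_{|w|=1}u(w)$ and $\mu=\sup_{|v|=1/2}u(v)$. The first ingredient is a comparison with the reference annulus: for any point $z'$ with $s<|z'|<1$, decomposing a Brownian motion from $z'$ at the exit time $\sigma$ of $A_{s,1}$ and using \eqref{logest} gives $u(z')=h_{A_{s,1}}(z',C_s)+\E^{z'}[u(B_\sigma);B_\sigma\in C_1]\le\log(1/|z'|)/\log(1/s)+M$; in particular, taking $|z'|=1/2$ (legitimate since $s<r\le1/2$), $\mu\le\log 2/\log(1/s)+M$. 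The second ingredient bounds $M$ in terms of $\mu$: starting from $w$ with $|w|=1$, a path that reaches $C_s$ must first reach $C_{1/2}$, and since $\partial D\subseteq\{|z|\ge1\}$ such a path cannot leave $D_s$ before reaching $C_{1/2}$ except through $\partial D$, so the probability it reaches $C_{1/2}$ before leaving $D_s$ equals $h_{D_{1/2}}(w,C_{1/2})\le p$; the strong Markov property at the hitting time of $C_{1/2}$ then gives $M\le p\mu$. Combining, $M\le p\bigl(\log 2/\log(1/s)+M\bigr)$, and since $1-p>0$ this rearranges to $M\le\frac{p\log 2}{(1-p)\log(1/s)}$. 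Finally, applying the annulus comparison at $z'=z$ and bounding the coefficient $1-\log(1/r)/\log(1/s)$ of $M$ by $1$,
\[
h_{D_s}(z,C_s)\le\frac{\log r}{\log s}+\frac{p\log 2}{(1-p)\log(1/s)}
=\frac{\log r}{\log s}\Bigl[1+\frac{p\log 2}{(1-p)\log(1/r)}\Bigr],
\]
which gives the stated bound via $1+y\le(1-y)^{-1}$ for $0\le y<1$; indeed the argument produces the slightly sharper inequality with $1+y$.

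No estimate here is delicate; the content is simply organising a comparison with $A_{s,1}$ so as to isolate the contribution of Brownian excursions into $D\sm\Disk$. The step to get right is the bookkeeping in the bootstrap --- keeping clear which supremum is over which circle --- together with the observation that the hypothesis $p_D<1$ is exactly what makes the linear inequality for $M$ solvable. A minor point is that $C_1$ may meet $\partial D$, but this causes no trouble once $u$ is taken to be $0$ at boundary points of $D_s$.
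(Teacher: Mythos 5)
Your proof is correct and follows essentially the same strategy as the paper's: a strong Markov decomposition against the round annulus $A_{s,1}$, with the hypothesis $p<1$ used to close a linear fixed-point inequality for the supremum of the harmonic measure over an intermediate circle. The only organizational difference is that the paper bootstraps on $q=\sup_{|z|=r}h_{D_s}(z,C_s)$ via an auxiliary quantity on $C_1$, whereas you bootstrap on the supremum over $C_1$ directly; your version produces the slightly sharper factor $1+y$ in place of $(1-y)^{-1}$, which implies the stated bound whenever $y<1$ (the only regime in which the stated bound is non-vacuous).
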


\begin{proof}  
The inequality $p_D < 1$ follows immediately
from the fact that $D$ is nonpolar and contains $\Disk$.

Let $T = T_s =\inf\{t: B_t \in C_s \cup C_1\}$
and $\sigma = \sigma_{s,r}
=\inf\{t \geq T: B_t \in C_{r}\}$.  Then if $|z| \leq 1/2$,
\[  \Prob^z\{B_{\tau_{D_s}} \in C_s\} =
     \Prob^z\{B_T \in C_s\} +
    \Prob^z\{B_T \in C_1, B_{\tau_{D_s}} \in C_s\} .\]
By \eqref{logest}, 
\[   \Prob^z\{B_T \in C_s\}  = \frac{\log r}{\log s}, \]
which gives the lower bound.  Let
\begin{align*}
           q = q(r,s,D) &= \sup_{|\tilde z| = r}
    \Prob^{\tilde z} \left\{B_{\tau_{D_s}} \in C_s
\right\} , \\
          u = u(r,D) &= \sup_{|\tilde w| = 1}
\Prob^{\tilde w}\left \{B_{\tau_{D_r}} \in C_r
\right\}  .
\end{align*}
Then,
\begin{multline*}
\Prob^z\{ B_T \in C_1, B_{\tau_{D_s}} \in C_s\} \\
\leq
 \Prob^z\{ \sigma < \tau_{D_s} \mid B_T \in C_1\}
  \, \Prob^z\{  B_{\tau_{D_s}} \in C_s \mid 
  B_T \in C_1,  \sigma < \tau_{D_s} \}
 \leq  u \, q .
\end{multline*}
Applying this to the maximizing $\tilde z$, gives
\[     q  \leq   \frac{\log r}{\log s} + u \, q,\;\;\;\;\;
   q \leq  \frac{\log r}{(1-u) \,\log s} .\]

By \eqref{logest}, 
the probability that a Brownian motion starting on
$C_{1/2}$ reaches $C_r$ before reaching $C_1$ is
$\log 2/\log(1/r)$.  Using a similar argument as in
the previous paragraph, we see that
\begin{equation*}
\label{mar2.25} \tag{4.9 bis}\sbox{0}{\popQED}
\makeatletter
\gdef\alt@tag{%
        \global\let\alt@tag\@empty
        \vtop{\ialign{\hfil##\cr
                \tagform@{4.9 bis}\cr
                \qedsymbol\cr}}%
        \setbox\z@
      }%
\makeatother
   u \leq p \, \frac{\log 2}{\log(1/r)}
   +  p \, u, \;\;\;\;\; u \leq \frac{p}{1-p}
  \, \frac{\log 2}{\log(1/r)}.
\end{equation*}
\end{proof}

\begin{proposition}
Suppose $D$ is a nonpolar domain containing the origin.
Then there exists $c = c_D < \infty$ such that if 
$0 < r \leq 1/2$, 
 $D_r = D \cap
\Outer_r$,  and $z \in D, |z| \geq 1$, 
\begin{equation}  \label{feb28.2}
        h_{D_r}(z,C_r) \leq \frac{c}{\log(1/r)}. 
\end{equation}
Also,  if $|w|= r$,  
\begin{equation}  \label{feb28.2.alter}
h_{D_r}(z,w) \leq \frac{c}{r \, \log(1/r)}.
\end{equation}
\end{proposition}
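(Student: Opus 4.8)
The plan is to prove the pointwise bound~\eqref{feb28.2.alter} and then deduce~\eqref{feb28.2} from it by integrating over the circle $C_r$, which has length $2\pi r$. Fix a radius $\rho_0\in(0,1)$ with $\overline{\Disk_{\rho_0}}\subset D$; this is possible since $D$ is open and contains $0$. For $\rho>0$ write $\sigma_\rho=\inf\{t:B_t\in C_\rho\}$. Because $D$ is nonpolar, $\partial D$ is a nonempty subset of $\Outer_{\rho_0}$ that is hit by Brownian motion, so $\Prob^v\{\tau_D<\sigma_{\rho_0/2}\}>0$ for every $v\in C_{\rho_0}$ (from $v$ the path can escape to large moduli without ever entering $\Disk_{\rho_0/2}$). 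Since $v\mapsto\Prob^v\{\sigma_{\rho_0/2}<\tau_D\}$ is harmonic and continuous on the compact circle $C_{\rho_0}$, the constant
\[
 p=p_D:=\sup_{|v|=\rho_0}\Prob^v\{\sigma_{\rho_0/2}<\tau_D\}
\]
is strictly less than $1$. This is the only place nonpolarity enters, and the factor $1-p$ it supplies is the source of the logarithmic decay.

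Assume first that $r<\rho_0/4$, so that $C_r\subset\Disk_{\rho_0}\subset D$ is a smooth piece of $\partial D_r$ and $A_{r,\rho_0}\subset D_r$. Fix $w$ with $|w|=r$ and set $M:=\sup_{|v|=\rho_0/2}h_{D_r}(v,w)$. For $v'\in C_{\rho_0}$ every path that leaves $D_r$ through $C_r$ must first cross $C_{\rho_0/2}$, so by the strong Markov property at $\sigma_{\rho_0/2}$ and $\tau_{D_r}\le\tau_D$,
\[
 h_{D_r}(v',w)=\E^{v'}\!\bigl[\mathbf 1\{\sigma_{\rho_0/2}<\tau_{D_r}\}\,h_{D_r}(B_{\sigma_{\rho_0/2}},w)\bigr]\le \Prob^{v'}\{\sigma_{\rho_0/2}<\tau_D\}\,M\le pM .
\]
For $|v|=\rho_0/2$, the strong Markov property at the exit of the annulus $A_{r,\rho_0}$ (whose boundary pieces are $C_r$ and $C_{\rho_0}$) gives
\[
 h_{D_r}(v,w)=h_{A_{r,\rho_0}}(v,w)+\E^{v}\!\bigl[\mathbf 1\{\sigma_{\rho_0}<\sigma_r\}\,h_{D_r}(B_{\sigma_{\rho_0}},w)\bigr]\le h_{A_{r,\rho_0}}(v,w)+pM ,
\]
and taking the supremum over $|v|=\rho_0/2$ yields $M\le(1-p)^{-1}\sup_{|v|=\rho_0/2}h_{A_{r,\rho_0}}(v,w)$. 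By Brownian scaling, $h_{A_{r,\rho_0}}(v,w)=r^{-1}h_{A_{1,R}}(v/r,w/r)$ with $R=\rho_0/r>4$, where $|v/r|=R/2$ and $|w/r|=1$; writing this Poisson kernel as $h_{A_{1,R}}(v/r,C_1)\cdot\overline h_{A_{1,R}}(v/r,w/r;C_1)$ and applying~\eqref{logest} to the first factor (it equals $\log 2/\log R$) and~\eqref{feb28.2.alt} with domain $A_{1,R}$ to the second (it is $O(1)$, since $|v/r|=R/2$ stays bounded away from $C_1$), we get $h_{A_{r,\rho_0}}(v,w)=O\bigl(1/(r\log R)\bigr)=O_D\bigl(1/(r\log(1/r))\bigr)$, using $\log R=\log(1/r)-\log(1/\rho_0)$. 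Finally, a path from $z$ with $|z|\ge1$ that leaves $D_r$ through $C_r$ must cross $C_{\rho_0/2}$, so $h_{D_r}(z,w)\le M$, which gives~\eqref{feb28.2.alter} in this range.

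For $\rho_0/4\le r\le1/2$ the estimate is elementary: $D_r\subset\Outer_r$, and near any $w\in C_r\cap D$ (which is all of $C_r$ once $r<\dist(0,\partial D)$) the boundaries of $D_r$ and of $\Outer_r$ coincide, so by domain monotonicity of the Poisson kernel $h_{D_r}(z,w)\le h_{\Outer_r}(z,w)=r^{-1}h_{\Outer}(z/r,w/r)=O(1/r)$ by~\eqref{feb28.7}, which is $O_D\bigl(1/(r\log(1/r))\bigr)$ since $\log(1/r)$ is bounded below on this range. This establishes~\eqref{feb28.2.alter} in general. Integrating it over $C_r$ then gives
\[
 h_{D_r}(z,C_r)=\int_{C_r}h_{D_r}(z,w)\,|dw|\le 2\pi r\cdot O_D\bigl(1/(r\log(1/r))\bigr)=O_D\bigl(1/\log(1/r)\bigr),
\]
which is~\eqref{feb28.2} (for $r$ bounded away from $0$ one simply uses $h_{D_r}(z,C_r)\le1$).

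The one genuinely delicate point is arranging the self-improving inequality for $M$ so that the return factor is $p<1$ rather than $1$: this hinges on the observation that a Brownian motion which has climbed back out to scale $\rho_0$ has, by nonpolarity, a uniformly positive chance of escaping through $\partial D$ before it can descend again to scale $\rho_0/2$. Everything else is routine bookkeeping with the strong Markov property, Brownian scaling, and the planar estimates~\eqref{logest},~\eqref{feb28.7},~\eqref{feb28.2.alt} recorded earlier in this subsection.
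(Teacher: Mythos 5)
Your proof is correct and rests on the same renewal mechanism as the paper's: each return to the outer scale carries a uniformly positive probability (supplied by nonpolarity of $\p D$) of exiting $D$ before descending again, which converts the annulus crossing probability $\asymp 1/\log(1/r)$ of \eqref{logest} into the stated bounds via the self-improving inequality $M\le(\text{one crossing})+pM$ with $p<1$. The only structural difference is the order: you establish the pointwise bound \eqref{feb28.2.alter} first and integrate over $C_r$ to get \eqref{feb28.2}, whereas the paper proves \eqref{feb28.2} first and then deduces \eqref{feb28.2.alter} by factoring the Poisson kernel through $C_{2r}$ as $h_{D_r}(z,w)\le h_{D_{2r}}(z,C_{2r})\,\sup_{|\zeta|=2r}h_{\Outer_r}(\zeta,w)$; both directions are equally valid.
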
 

\begin{proof}
Find   $0 < \beta < 1/2$ such that $\p D \cap \Outer_{2\beta}$
is nonpolar.  It suffices
to prove \eqref{feb28.2} for $r < \beta$. Since
 $\p D \cap \Outer_{2\beta}$
is nonpolar, 
   there exists $q = q_{D,\beta} > 0$ such that
for every $|z| \geq 2\beta$, the probability that a Brownian
motion starting at $z$ leaves $D$ before reaching
$C_{\beta}$ is at least $q$.
 If $r < \beta$, the probability
that a Brownian motion starting at $C_\beta$ reaches
$C_r$ before reaching $C_{2\beta}$ is 
\[  p(r) = \log 2/\log(2\beta/r) \leq \frac{c_1}{\log
(1/r)} . \]
Let $Q(r) = \sup_{|z| \geq  2 \beta}  h_{D_r}(z,C_r)$.
Then arguing similarly to the previous proof,
we have
\[    Q(r) \leq (1-q) \, \left[p(r) + 
  [1-p(r)]\, Q(r)\right] \leq p(r) + (1-q) \, Q(r), \]
which yields $ Q(r) \leq p(r)/q$.  This gives
\eqref{feb28.2} and \eqref{feb28.2.alter} follows from
\[ h_{D_r}(z,w) \leq h_{D_{2r}}(z,C_{2r})
 \, \sup_{|\zeta| = 2r} \, h_{\Outer_r}(\zeta,w).\qedhere\]
\end{proof}

\begin{proposition}  There exists $c < \infty$
such that the following holds.
Suppose $|z|  = 1/2$  and 
$0 < s < r < 1/8$.  Let $D_{s,r} = \Outer_s \cap
  \Outer_r(z)$.
Then for $|w| \geq 1$,
\begin{equation}  \label{feb28.4}
 \left| \frac{\log(rs)}{\log r} h_{D_{s,r}}(w,C_s) 
- 1\right| \leq  \frac{c}{
  \log(1/r)}.
\end{equation}
\end{proposition}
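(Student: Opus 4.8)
The plan is to produce a closed formula for $u(w):=h_{D_{s,r}}(w,C_s)$, valid up to an additive error $O(r+s)$ for all $w$, by comparing $u$ with an explicit bounded harmonic function, and only at the very end to use $|w|\ge1$. Write $\Omega:=D_{s,r}=\Outer_s\cap\Outer_r(z)$. Since $s+r<\tfrac14<\tfrac12=|z|$, the closed disks $\overline{\Disk_s}$ and $\overline{\Disk_r(z)}$ are disjoint, so $\Omega$ is a nonpolar domain with $\partial\Omega=C_s\cup C_r(z)$, the function $u$ is harmonic in $\Omega$ with boundary values $1$ on $C_s$ and $0$ on $C_r(z)$, and $1-u(\cdot)=h_\Omega(\cdot,C_r(z))$. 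The comparison function I would use is
\[ \Phi(w):=\log\frac{r\,|w|}{s\,|w-z|}=\log\frac rs+\log|w|-\log|w-z|. \]
It is harmonic on $\Omega$, its only singularities being at $0$ and $z$, which lie outside $\Omega$; and it is bounded on $\Omega$, since $|w|/|w-z|$ stays in a compact subset of $(0,\infty)$ there, the potentially large constant $\log(r/s)$ being cancelled by $\log|w|\approx\log s$ near $C_s$ and by $-\log|w-z|\approx-\log r$ near $C_r(z)$. Quantitatively, on $C_s$ one has $|w-z|=\tfrac12+O(s)$, so $\Phi=\log(2r)+O(s)$ there, while on $C_r(z)$ one has $|w|=\tfrac12+O(r)$, so $\Phi=\log\tfrac1{2s}+O(r)$ there.

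Next, since $\Phi$ is bounded and harmonic on $\Omega$ and $\tau_\Omega<\infty$ almost surely ($\Omega$ being nonpolar), the martingale $\Phi(B_{t\wedge\tau_\Omega})$ is bounded and optional stopping gives $\Phi(w)=\E^w[\Phi(B_{\tau_\Omega})]$. Splitting this expectation according to which of $C_s,C_r(z)$ is hit, inserting the two boundary estimates, and using $h_\Omega(w,C_s)=u(w)$, $h_\Omega(w,C_r(z))=1-u(w)$, one gets
\[ \Phi(w)=\log(2r)\,u(w)+\log\tfrac1{2s}\,\bigl(1-u(w)\bigr)+O(r+s). \]
Because $\log(2r)-\log\tfrac1{2s}=-\log\tfrac1{4rs}$ and $\log\tfrac1{2s}-\Phi(w)=\log\tfrac{|w-z|}{2r|w|}$, solving for $u(w)$ yields, for every $w\in\Omega$,
\[ u(w)\,\log\tfrac1{4rs}=\log\frac{|w-z|}{2r|w|}+O(r+s). \]

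Finally, for $|w|\ge1$ one has $\tfrac{|w-z|}{|w|}=|1-z/w|\in[\tfrac12,\tfrac32]$, so $\log\tfrac{|w-z|}{2r|w|}=\log(1/r)+O(1)$ and hence $u(w)\log\tfrac1{4rs}=\log(1/r)+O(1)$. Likewise $\tfrac{\log r}{\log(rs)}\log\tfrac1{4rs}=\tfrac{\log(1/r)}{\log(1/(rs))}\bigl(\log\tfrac1{rs}-\log4\bigr)=\log(1/r)+O(1)$, using $\tfrac{\log(1/r)}{\log(1/(rs))}\le1$. Subtracting and dividing by $\log\tfrac1{4rs}$ — which is positive and, since $rs<\tfrac1{64}$, comparable to $\log\tfrac1{rs}\ge\log(1/r)$ — gives $u(w)-\tfrac{\log r}{\log(rs)}=O\!\bigl(1/\log\tfrac1{rs}\bigr)$; multiplying by $\tfrac{\log(rs)}{\log r}=\tfrac{\log(1/(rs))}{\log(1/r)}$ then yields $\bigl|\tfrac{\log(rs)}{\log r}u(w)-1\bigr|=O(1/\log(1/r))$, which is \eqref{feb28.4}.

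The real content is just the choice of the comparison function $\Phi$; everything after that is bookkeeping with the Poisson representation, and (on this route) none of the preceding lemmas of the subsection are needed — they would be for a more hands-on excursion/renewal argument. The one slightly delicate point is the exact shape of the error: \eqref{feb28.4} bounds the \emph{relative} error of $u(w)$ against $\tfrac{\log r}{\log(rs)}$ by $O(1/\log(1/r))$, which is the apparently stronger absolute bound $O(1/\log\tfrac1{rs})$ when $s\ll r$, and this is exactly what the closed formula delivers, because the $O(1)$ ambiguity in $\log\tfrac{|w-z|}{2r|w|}$ gets divided by $\log\tfrac1{4rs}\asymp\log\tfrac1{rs}$, not merely by $\log(1/r)$.
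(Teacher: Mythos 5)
Your proof is correct, and it takes a genuinely different route from the paper's. The paper argues probabilistically: it uses the perpendicular bisector $L=\{x=1/4\}$ of the segment from $0$ to $z$, notes that by symmetry a Brownian motion from $w\in L$ first hits $C_r\cup C_r(z)$ on either circle with probability $1/2$, invokes Lemma~\ref{feb28.lemma1} to estimate the conditional probability of continuing down to $C_s$ before returning to $L$, and finally extends from $w\in L$ to $|w|\ge1$ by observing that the chance of reaching either small circle before $L$ is $O(1/\log(1/r))$. You instead exploit the fact that both ``obstacles'' are round circles: the explicit harmonic function $\Phi(w)=\log\frac{r|w|}{s|w-z|}$ is constant up to $O(s)$ on $C_s$ and up to $O(r)$ on $C_r(z)$, so bounded optional stopping immediately yields the closed-form approximation $h_{D_{s,r}}(w,C_s)\,\log\frac1{4rs}=\log\frac{|w-z|}{2r|w|}+O(r+s)$ for \emph{all} $w$ in the domain, from which \eqref{feb28.4} follows by elementary algebra (and your observation that the stated relative error $O(1/\log(1/r))$ is equivalent to the absolute error $O(1/\log(1/(rs)))$ is accurate). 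What your approach buys is self-containedness and a stronger, pointwise formula with no restriction to $|w|\ge1$; what the paper's approach buys is robustness, since its renewal argument via Lemma~\ref{feb28.lemma1} does not require either boundary component to be a circle and is of a piece with the estimates used elsewhere in Sect.~4 for general nonpolar sets. The only points worth making explicit in a final write-up are that $\Phi$ is indeed bounded on all of $D_{s,r}$ (via $|w|/|w-z|\le 1+1/(2r)$ and $|w|/|w-z|\ge s/(s+1/2)$ there), which justifies the optional stopping, and that recurrence of planar Brownian motion guarantees $h_{D_{s,r}}(w,C_s)+h_{D_{s,r}}(w,C_r(z))=1$; both are routine.
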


\begin{proof}
Without loss of generality, we assume $z = 1/2$.  Let
$L$ denote the line $\{x+iy: x = 1/4\}.$ Let
$\tau = \tau_{D_{s,r}}$,  $T$
 the first time a Brownian motion  reaches
$C_r \cup C_r(z)$,  and $\sigma$ the first time after
$T$ that the Brownian motion returns to~$L$.
 By symmetry, for every $w \in L$,
\[        \Prob^w\{B_T \in C_r\} = \frac 12 . \]
Using Lemma \ref{feb28.lemma1}, we get
\[  \Prob^w\{ \tau  < \sigma
  \mid B_T \in C_r\} = \frac{\log r}{\log s}
  \, \left[1 + O\left(\frac 1 {\log(1/r)}\right)
  \right].\]
Therefore, for every $w \in L$,
\[   \Prob^w\{\tau < \sigma; B_\tau \in C_r(z)\}
   = \frac 12 , \]
\[    \Prob^w\{\tau < \sigma; B_\tau \in C_s\}
   = \frac{\log r}{2\log s}
  \, \left[1 + O\left(\frac 1 {\log(1/r)}\right)
  \right].\]
This establishes \eqref{feb28.4} for $w \in L$.
If $|w| \geq 1$, then the probability of reaching
$\Outer_r(z)$ before reaching $L$ is $O(1/\log(1/r))$
and the probability of reaching $\Outer_s$ before reaching
$L$ is $O(1/\log(1/s))$.  Using this we get
\eqref{feb28.4} for $|w| \geq 1$. 
\maybeqed\end{proof}

\begin{remark}
The end of the proof uses a well known fact.  Suppose one
performs independent trials with three possible outcomes
with probabilities $p,q,1-p-q$, respectively. Then the
probability that an outcome of the first type occurs
before one of the second type is $p/(p+q)$. 
\end{remark}

\subsection{Brownian bubble measure}
\label{sec:blmbubble}

Let $D$ be a nonpolar domain, 
$z\in\p D$ an analytic boundary point
and $\tilde D\subset D$ a domain that agrees with $D$ near $z$.
From the definition of the Brownian bubble measure, we see
that if $\p \tilde D \cap D$ is smooth
 \[ m(z;D,\tilde D) = 
          \pi \, \int_{\p  \tilde D \cap D}  
  h_{\p\tilde D}(z,w) \, h_{D}(w,z) \, |dw| . \]
This is
also equal to
$    \pi \,\p_{{\bf n}} f(z)  $
for the function $f(\zeta ) = h_D(\zeta,z) - h_{\tilde D}(
\zeta,z)$.  Let $h_{D,-}(V,z), h_{D,+}(V,z)$ denote the 
infimum and supremum, respectively, of $h_D(w,z)$ over
$w \in V$.  Then a simple estimate is
\begin{equation}   \label{feb27.10}
    h_{D,-}(\p \tilde D \cap D,z) \leq
  \frac{ m(z;D,\tilde D)}
 {\pi  \,  \exc_{\tilde D}(z, \p \tilde D \cap D)}
\leq
   h_{D,+}(\p \tilde D \cap D,z).
\end{equation}

\begin{lemma} \label{feb27.lemma2}
If $R > 1$, let
\[   \rho(R) = m(1;\Outer,A_R).\]
There exists $c < \infty$ such that
for all $R \geq 2$,
\[  \left| \rho(R)
     - \frac 1{2\log R}\right|
 \leq \frac{c}{R \, \log R} . \]
\end{lemma}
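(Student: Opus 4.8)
The plan is to express $\rho(R) = m(1;\Outer,A_R)$ using the formula for the bubble-measure difference in terms of boundary Poisson kernels, and then apply the two-variable Brownian-motion estimates already established. Since $A_R = \Outer \cap \Disk_R$, the domain $A_R$ agrees with $\Outer$ near the analytic boundary point $1$, and $\p A_R \cap \Outer = C_R$ is smooth, so
\[
\rho(R) = \pi \int_{C_R} h_{\p A_R}(1,w)\, h_{\Outer}(w,1)\, |dw|.
\]
For the first factor we use the corollary giving \eqref{mar2.5}: uniformly for $|w| = R$,
\[
h_{\p A_R}(1,w) = \frac{1}{2\pi R\,\log R} + O(R^{-2}).
\]
For the second factor we use \eqref{feb28.7}: since $|w| = R \ge 2$, $2\pi\, h_{\Outer}(w,1) = 1 + O(1/R)$, i.e. $h_\Outer(w,1) = \frac{1}{2\pi} + O(1/R)$.

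Multiplying the two estimates and integrating over $C_R$, whose length is $2\pi R$, gives
\[
\rho(R) = \pi \cdot 2\pi R \cdot \Bigl[\frac{1}{2\pi R\,\log R} + O(R^{-2})\Bigr]\Bigl[\frac{1}{2\pi} + O(1/R)\Bigr]
= \frac{1}{2\log R} + O\!\Bigl(\frac{1}{R\,\log R}\Bigr),
\]
where the error bookkeeping is routine: the cross terms contribute $O(1/R)$ and $O(1/(R\log R))$ respectively, both of which are absorbed into $O(1/(R\log R))$ for $R \ge 2$ since $\log R$ is bounded away from zero and bounded above by a constant times $R$ on $[2,\infty)$ — actually $O(1/R)$ dominates $O(1/(R\log R))$, so the stated error $O(1/(R\log R))$ is the correct (sharper-looking) bound only if one is careful; one checks that the leading correction genuinely comes from the $O(R^{-2})$ term in \eqref{mar2.5} multiplied by the $\Theta(R)$ length and the $\Theta(1)$ kernel, giving $O(1/R)$, so in fact the cleanest statement has error $O(1/R)$ and a fortiori $O(1/(R\log R))$ fails unless... here I would simply retain whichever of the two competing error terms the paper's downstream use requires, and since the $h_\Outer$ correction $O(1/R)$ times the main kernel term $\Theta(1/(R\log R))$ gives $O(1/(R^2\log R))$ while the $h_{\p A_R}$ correction $O(R^{-2})$ times $\Theta(1)$ gives $O(1/R)$, the honest bound is $O(1/R)$; I would present it as $O(1/(R\log R))$ only after double-checking that the $O(R^{-2})$ term in \eqref{mar2.5} can itself be improved to $O(R^{-2}\log R)$ or by noting the cancellation structure.

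The main obstacle — really the only nontrivial point — is getting the error term in the right form, which hinges entirely on the quality of the estimate \eqref{mar2.5} for $h_{\p A_R}(1,w)$; the rest is a one-line integration. I would therefore double-check that \eqref{mar2.5}'s error $O(R^{-2})$, when integrated against $h_\Outer(\cdot,1)\,|dw|$ over $C_R$, yields $O(1/(R\log R))$ as claimed, correcting the constant if necessary. Everything else follows immediately from the cited estimates, with no further Brownian-motion input needed.
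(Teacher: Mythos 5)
Your starting formula is right, but your error analysis exposes a genuine gap that you notice and then leave unresolved: substituting the pointwise estimate \eqref{mar2.5} for $h_{\p A_R}(1,w)$ costs a factor of $\log R$, since its additive error $O(R^{-2})$, integrated against the $\Theta(1)$ kernel $h_\Outer(w,1)$ over the length-$2\pi R$ circle $C_R$, contributes $O(1/R)$ rather than $O(1/(R\log R))$. So as written you prove only the weaker bound $|\rho(R)-\tfrac{1}{2\log R}|\le c/R$, not the stated one.

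The fix --- and this is what the paper does via the sandwich inequality \eqref{feb27.10} --- is to avoid approximating $h_{\p A_R}(1,\cdot)$ pointwise at all. Only the factor $h_\Outer(w,1)$ needs estimating, and by \eqref{feb28.7} it equals $\frac{1}{2\pi}\,[1+O(1/R)]$ \emph{uniformly} on $C_R$; pulling this multiplicative estimate out of the integral leaves $\pi\int_{C_R}h_{\p A_R}(1,w)\,|dw| = \pi\,\exc_{A_R}(1,C_R) = \pi/\log R$ exactly, by \eqref{feb27.11}. Hence
\[
\rho(R)=\frac{1}{2\log R}\,\bigl[1+O(1/R)\bigr]
=\frac{1}{2\log R}+O\Bigl(\frac{1}{R\log R}\Bigr),
\]
which is the claimed bound. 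In short: the error should enter multiplicatively through the one factor that is only known approximately, while the boundary Poisson kernel $h_{\p A_R}(1,\cdot)$ should be integrated exactly. With that one change your argument coincides with the paper's.
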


 \begin{remark} Rotational invariance implies
that $m(z;\Outer,A_R ) = \rho(R)$ for
all $|z| = 1.$
\end{remark}

\begin{proof}   By \eqref{feb27.11}, 
\[
    \exc(1,C_R; A_R)
    = \frac{1}{\log R}. 
\]
and by \eqref{feb28.7}, 
\[    2\pi \, h_{\Outer}(w,1) = 1 + O(R^{-1}) \]
for $w\in C_R$. 
We now use \eqref{feb27.10}.
 \maybeqed\end{proof}

The next lemma generalizes this to domains $ D$ with
$A_R \subset  D \subset \Outer$.  The result is similar
but the error term is a little larger.  Note that
the $q$ in the next lemma equals $1$ if $ D
= \Outer$.
 
\begin{lemma}  Suppose $R \geq 2$ and
$D$ is a domain satisfying
$A_R \subset D \subset \Outer$.   
Let $q$ be the probability that a Brownian motion
started uniformly on $C_R$ exits $D$ at
$C_1$, i.e.,
\[ q = q(R,D) = \frac{1}{2 \pi R} \int_{C_R}
      h_{ D}(z,C_1) \, |dz|.\]
Then if $|w| = 1$,
\begin{align}  
\label{mar2.20}
 m(w; D,A_R) &= \frac{q}{2\, \log R}
    \, \left[1 + O\left(\frac {\log R}{R} \right)
 \right], \\
\label{mar2.21}
  m(w;\Outer,D) &= \frac{1-q}{2 \, \log R}
   + 
  O\left (\frac {q \log R + 1}{ R \log R}
\right). 
\end{align}
\end{lemma}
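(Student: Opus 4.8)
The plan is to reduce both identities to the excursion representation of the bubble mass together with the two sharp estimates already available, namely~\eqref{mar2.5} and~\eqref{feb28.2.alt}. First I would record two preliminary facts. Since $A_R=\Outer\cap\Disk_R$ and $A_R\subset D\subset\Outer$, one has $D\cap\Disk_R=A_R$; in particular $D$ coincides with $\Outer$ in a neighbourhood of every point of $C_1$, so $m_D(w)$, $m_\Outer(w)$ and $m_{A_R}(w)$ are all defined for $|w|=1$. Second, from the chain $A_R\subset D\subset\Outer$ the loops rooted at $w$ that lie in $\Outer$ but not in $A_R$ split into those also lying in $D$ and those not, which gives the additivity $m(w;\Outer,A_R)=m(w;\Outer,D)+m(w;D,A_R)$. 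Since $m(w;\Outer,A_R)=\rho(R)$ for $|w|=1$ by the Remark following Lemma~\ref{feb27.lemma2}, the second claim~\eqref{mar2.21} will drop out of the first claim~\eqref{mar2.20} and Lemma~\ref{feb27.lemma2} once the $O$-terms are collected; so the content is in~\eqref{mar2.20}.

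For~\eqref{mar2.20} I would start from $m(w;D,A_R)=\pi\int_{C_R}h_{\p A_R}(w,\xi)\,h_D(\xi,w)\,|d\xi|$ (here $\p A_R\cap D=C_R\cap D$ because $C_1\cap D=\eset$, and only $\xi\in D$ contributes) and estimate the two factors separately. The excursion kernel is nearly uniform on $C_R$: by~\eqref{mar2.5}, $h_{\p A_R}(w,\xi)=\tfrac{1}{2\pi R\log R}\,[1+O(\log R/R)]$ uniformly over $|w|=1$ and $\xi\in C_R$. For the Poisson kernel, write $h_D(\xi,w)=h_D(\xi,C_1)\,\overline h_D(\xi,w;C_1)$; since $|\xi|=R\ge R/2$ and $A_R\subset D\subset\Outer$, estimate~\eqref{feb28.2.alt} gives $\overline h_D(\xi,w;C_1)=\tfrac1{2\pi}\,[1+O(\log R/R)]$, hence $h_D(\xi,w)=\tfrac1{2\pi}\,h_D(\xi,C_1)\,[1+O(\log R/R)]$. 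Multiplying and integrating, the $\xi$-average of $h_D(\xi,C_1)$ over $C_R$ is exactly $q$ by definition, so $m(w;D,A_R)=\tfrac{q}{2\log R}\,[1+O(\log R/R)]$, which is~\eqref{mar2.20}. Feeding this and $\rho(R)=\tfrac1{2\log R}+O\bigl(\tfrac1{R\log R}\bigr)$ into the additivity identity yields $m(w;\Outer,D)=\tfrac{1-q}{2\log R}+O\bigl(\tfrac1{R\log R}\bigr)+O\bigl(\tfrac qR\bigr)$, and since $O(q/R)=O\bigl(\tfrac{q\log R+1}{R\log R}\bigr)$ for $R\ge2$, this is~\eqref{mar2.21}.

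I do not expect a genuine obstacle; the argument is bookkeeping once the right input is identified. The one conceptual point — and the reason this lemma needs a separate proof rather than a direct appeal to~\eqref{feb27.10} as in Lemma~\ref{feb27.lemma2} — is that for general $D$ the kernel $h_D(\cdot,C_1)$ is not nearly constant on $C_R$ (it records how $D$ behaves beyond radius $R$), so the crude two-sided bracketing~\eqref{feb27.10} is useless here; what makes the computation go through is the pointwise near-uniformity of $h_{\p A_R}(w,\cdot)$ on $C_R$ supplied by~\eqref{mar2.5}, which lets one pull the $\xi$-average out as the single number $q$. The only care needed is the final arithmetic of error terms, to confirm the relative error $O(\log R/R)$ in~\eqref{mar2.20} and the additive error $O\bigl(\tfrac{q\log R+1}{R\log R}\bigr)$ in~\eqref{mar2.21}.
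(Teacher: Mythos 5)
Your proposal is correct and follows essentially the same route as the paper: the excursion representation of $m(w;D,A_R)$ over $C_R$, the near-uniformity of $h_{\p A_R}(w,\cdot)$ from~\eqref{mar2.5}, the factorization $h_D(\xi,w)=\tfrac1{2\pi}h_D(\xi,C_1)[1+O(\log R/R)]$ from~\eqref{feb28.2.alt}, and then additivity plus Lemma~\ref{feb27.lemma2} for~\eqref{mar2.21}. The error-term bookkeeping at the end is also as in the paper.
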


\begin{proof} By definition,
\[ 
   m(w; D,A_R)   =   \pi \int_{C_R}
   h_{\p A_R}(w,z) \, h_{D}(z,w) \, |dz|. \]
For $z\in C_R$, by \eqref{mar2.5} we know that
\[   h_{\p A_R}(w,z) = \frac{1}{2 \pi \, R \,
 \log R} \, \left[1  + O\left(\frac
   {\log R}R\right)\right],\]
and by \eqref{feb28.2.alt} we know that
\[    h_{ D}(z,w) = \frac{1}{2\pi} \,
  h_{D}(z,C_1) \, 
\left[1  + O\left(\frac
   {\log R}R\right)\right].\]
Combining these gives \eqref{mar2.20},  and \eqref
{mar2.21} follows from Lemma
\ref{feb27.lemma2} and 
\[m (w;\Outer, A_R) =   m(w; D,A_R) + m(w;\Outer,D) 
.\qedhere\]
\end{proof}

\begin{corollary}  \label{mar1.lemma2}
There exists $c < \infty$
such that the following is true.
Suppose $R \geq 2$ and
$D$  is a 
domain with $A_R \subset D 
\subset \Outer$.   Suppose $\p D 
\cap \Outer_R$ is nonpolar and hence
\[  p = p_{R,D}  := \sup_{|z| = R} h_{D\cap \Outer
_{R/2}}(z,C_{R/2}) < 1. \]
Then, if $|w| = 1$, 
 \[ \left|
m(w;\Outer ,  D )
   - \frac{1}{2  \, \log R} \right| \leq  
\frac{c}
   {(1-p) \, \log^2R}.\]
\end{corollary}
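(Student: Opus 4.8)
The plan is to feed an upper bound on the crossing probability $q=q(R,D)$ into formula~\eqref{mar2.21} and then use the elementary estimate $\log R\le R/e$, valid for all $R\ge2$, to absorb the remaining error terms. Concretely, I will prove
\[
q\le\frac{\log2}{(1-p)\log R},
\]
after which~\eqref{mar2.21} gives
\[
m(w;\Outer,D)-\frac1{2\log R}=-\frac q{2\log R}+O\!\left(\frac{q\log R+1}{R\log R}\right).
\]
The first term on the right is $O\!\bigl(1/((1-p)\log^2R)\bigr)$ immediately, and the error term equals $q/R+1/(R\log R)$; inserting the bound on $q$ and using $\log R\le R/e$ (and $1-p\le1$) makes each of these summands $O\!\bigl(1/((1-p)\log^2R)\bigr)$ as well. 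Summing the three contributions yields the claim with a universal constant~$c$.

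It remains to bound $q$. Since $q$ is the average of $h_D(z,C_1)$ over $C_R$, it is at most $Q:=\sup_{z\in C_R}h_D(z,C_1)$, which I bound by an excursion argument of the kind that recurs throughout Sect.~\ref{sec:blmlemmas}. The key geometric point is that $A_R\subset D\subset\Outer$ forces $D\cap\Disk_R=A_R$, so the only part of $\p D$ lying in $\overline{\Disk_R}$ is $C_1$. Start a Brownian motion at a point of $C_R\cap D$. For it to exit $D$ at $C_1$ it must first reach $C_{R/2}$ (by continuity, since $1<R/2<R$ when $R>2$); by the definition of $p$ — and since exiting $D\cap\Outer_{R/2}$ through $\p D\cap\Outer_{R/2}$, a set disjoint from $C_1$, is a terminal failure — this has probability at most $p$. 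Conditioned on reaching $C_{R/2}$, the motion next runs inside $A_R\subset D$ until it meets $C_1\cup C_R$; by~\eqref{logest} it meets $C_1$ first (a success, as $C_1\subset\p D$) with probability $\log2/\log R$, and otherwise it is back on $C_R$, whence the conditional probability of eventually exiting $D$ at $C_1$ is again at most $Q$. Hence
\[
Q\le p\Bigl[\tfrac{\log2}{\log R}+\bigl(1-\tfrac{\log2}{\log R}\bigr)Q\Bigr]\le p\,\tfrac{\log2}{\log R}+pQ,
\]
and since $p<1$ this gives $Q\le\log2/((1-p)\log R)$, hence the same bound for $q$.

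The step needing care is the excursion argument: one must confirm that each return to $C_R$ genuinely resets the estimate (this is why $Q$ is taken as a supremum over $C_R$) and that a path cannot reach $C_1$ without first visiting $C_{R/2}$. Both are routine and mirror the proofs of Lemma~\ref{feb28.lemma1} and the propositions that follow it. The degenerate case $R=2$, where $C_{R/2}=C_1$, needs no argument at all: there $q\le1\le1/(1-p)=\log2/((1-p)\log R)$.
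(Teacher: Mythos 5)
Your proposal is correct and follows the paper's own route: the paper's proof simply invokes the bound $q\le p\log 2/[(1-p)\log R]$ (citing the renewal inequality established in the proof of Lemma~\ref{feb28.lemma1}) and then feeds it into~\eqref{mar2.21}, exactly as you do. Your re-derivation of the bound on $q$ is the same excursion/renewal argument transplanted to the exterior geometry, and your bookkeeping of the error terms (using $q\le1$, $1-p\le1$ and $\log R=O(R)$) is sound.
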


\begin{proof}  Let $q$ be as in the previous
lemma. By \eqref{mar2.25} we see that
\[               q \leq \frac{p \, \log 2}{(1-p)
  \, \log R}. \]
and hence the result follows from \eqref{mar2.21}.
\maybeqed\end{proof}

\begin{remark}  We will used scaled versions of this corollary.
For example,
 if $D$ is a nonpolar domain containing $\Disk$,
$r < 1/2$,  $D_r = D \cap \Outer_r$, and $|w| = r$,
\[
\left|
r^2 \, m(w;\Outer_r ,  D_r )
   - \frac{1}{2  \, \log (1/r)} \right| \leq  
\frac{c}
   {(1-p) \log^2(1/r)}, 
\]
where
\[  p = \sup_{|z|=1} h_{D_{1/2}}(z,C_{1/2}).\]
\end{remark}

\begin{proposition}  \label{mar3.1}
 Suppose $V$ is a nonpolar
closed set,
$z \neq 0$, and $z,0 \notin  V$.
For $0 < r,s < \infty$, let
\[          
          D_{s,r} = \Outer_s \cap \Outer_r(z) . \]
Then   as $s,r \downarrow 0$, if $|w| = s$,
\[  \frac 1 \pi \, m(w;D_{s,r},D_{s,r}
\setminus V) = \frac{1}{2\pi s^2\,\log(1/s)
 } \,
 \frac{\log r}{\log(rs)} \, \left[1 + O\left(
  {\delta_{r,s}}\right) \right],\]
where $\delta_{r,s} = (\log (1/r))^{-1} + (\log (1/s))^{-1}.$
\end{proposition}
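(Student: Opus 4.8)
The plan is to apply the representation of the bubble–measure difference established in Section~\ref{sec:blmbubble}. For $r,s$ small enough that the closed disks $\overline{\Disk_s}$, $\{z'\colon|z'-z|\le r\}$ and the set $V$ are pairwise disjoint (possible since $0,z\notin V$ and $V$ is closed), one has $\partial(D_{s,r}\setminus V)\cap D_{s,r}=\partial V$, so that
\[
\frac1\pi\,m\bigl(w;D_{s,r},D_{s,r}\setminus V\bigr)
=\int_{\partial V}h_{D_{s,r}}(\zeta,w)\,\exc_{D_{s,r}\setminus V}(w,d\zeta).
\]
I would estimate the two factors separately, using throughout that, since $0,z\notin V$ and $V$ is closed, $\partial V$ lies at a fixed positive distance from both $0$ and $z$.

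For the Poisson kernel $h_{D_{s,r}}(\zeta,w)$ with $|w|=s$ and $\zeta\in\partial V$, write it as $\overline h_{D_{s,r}}(\zeta,w;C_s)\,h_{D_{s,r}}(\zeta,C_s)$. Rescaling by $1/s$, the domain $s^{-1}D_{s,r}$ contains an annulus $A_{1,R_0/s}$ for a fixed $R_0>0$ (depending only on $|z|$ and $\dist(0,V)$) and $\zeta/s$ lies outside $\Disk_{R_0/(2s)}$, so the lemma giving~\eqref{feb28.2.alt} yields $\overline h_{D_{s,r}}(\zeta,w;C_s)=\tfrac1{2\pi s}[1+O(s\log(1/s))]$ uniformly. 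The mass $h_{D_{s,r}}(\zeta,C_s)$ is the probability that a Brownian motion from $\zeta$ hits $C_s$ before $C_r(z)$; a rescaled form of the estimate~\eqref{feb28.4} (together with the symmetric statement, to cover $r\le s$, and a strong Markov argument reducing a macroscopic starting point near $0$ or $z$ to one on a fixed large circle) gives $h_{D_{s,r}}(\zeta,C_s)=\tfrac{\log r}{\log(rs)}[1+O(\delta_{r,s})]$, uniformly for $\zeta\in\partial V$. Hence $h_{D_{s,r}}(\zeta,w)=\tfrac1{2\pi s}\cdot\tfrac{\log r}{\log(rs)}\cdot[1+O(\delta_{r,s})]$, uniformly in $\zeta\in\partial V$.

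For the excursion mass $\exc_{D_{s,r}\setminus V}(w,\partial V)$ the key point is that, to leading order, it does not depend on $V$: the cost lies entirely in escaping the tiny circle $C_s$. Decomposing a Brownian excursion from $w\in C_s$ at its first visit to $C_{R_0}$, the part before that visit lives in $A_{s,R_0}$ and contributes $\exc_{A_{s,R_0}}(w,C_{R_0})=\tfrac1{s\log(R_0/s)}=\tfrac1{s\log(1/s)}[1+O(1/\log(1/s))]$ by~\eqref{feb27.11} rescaled, with the hitting point on $C_{R_0}$ nearly uniform (error $O(s)$); conditioned on this, the probability of subsequently hitting $V$ before $C_s\cup C_r(z)$ is $1-O(\delta_{r,s})$, because from a point at macroscopic distance a fixed nonpolar set is far more likely to be hit than a disk of radius $s$ or of radius $r$ (a disk of radius $\varepsilon$ has logarithmic ``cost'' $\log(1/\varepsilon)$, whereas $V$ has cost $O(1)$). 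This gives $\exc_{D_{s,r}\setminus V}(w,\partial V)=\tfrac1{s\log(1/s)}[1+O(\delta_{r,s})]$. (Alternatively, one may apply a rescaled Corollary~\ref{mar1.lemma2} to the component of $V^c$ containing the origin and then check that deleting the additional hole $C_r(z)$ alters the excursion mass by a relative factor $1+O(1/\log(1/r))$.) Pulling the uniform factor $h_{D_{s,r}}(\zeta,w)$ out of the integral and multiplying yields
\[
\frac1\pi\,m\bigl(w;D_{s,r},D_{s,r}\setminus V\bigr)
=\frac{1}{2\pi s^{2}\log(1/s)}\cdot\frac{\log r}{\log(rs)}\cdot[1+O(\delta_{r,s})],
\]
which is the assertion.

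The step I expect to be the main obstacle is the excursion–mass estimate, specifically establishing the absence of leading-order dependence on $V$: this requires the potential-theoretic fact that from a starting point at macroscopic distance the probability of hitting a disk of radius $\varepsilon$ before a fixed nonpolar set is $O(1/\log(1/\varepsilon))$, made uniform over the starting circle $C_{R_0}$ while keeping track of the two competing small disks. All of this is of the same type as the Brownian–motion lemmas of Section~\ref{sec:blmlemmas}. The Poisson–kernel factor, by contrast, is essentially a rescaled instance of estimates proved there (\eqref{feb28.4} and the lemma giving~\eqref{feb28.2.alt}).
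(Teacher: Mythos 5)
Your argument is correct and follows essentially the same route as the paper's proof: factor the bubble-mass difference via~\eqref{feb27.10} into the excursion mass $\exc_{D_{s,r}\setminus V}(w,V)=\frac{1}{s\log(1/s)}[1+O(\delta_{r,s})]$ (annulus formula~\eqref{feb27.11} plus the fact that from macroscopic distance the path hits $V$ before either small circle with probability $1-O(\delta_{r,s})$) and the Poisson kernel $h_{D_{s,r}}(\zeta,w)=\frac{1}{2\pi s}\,\frac{\log r}{\log(rs)}\,[1+O(\delta_{r,s})]$ via~\eqref{feb28.4} and~\eqref{feb28.2.alt}. The extra care you take with the case $r\le s$ in~\eqref{feb28.4} and with points $\zeta\in\p V$ that are only at distance $d$ from $0$ or $z$ is warranted, since the paper cites these estimates without comment.
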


\begin{remark}
The implicit constants in the $O(\cdot)$ term depend
on~$V,z$ but not on~$w$.
\end{remark}

\begin{proof}
We will use \eqref{feb27.10} and write
$\delta = \delta_{r,s}$.
  By scaling we may assume $z=2$
and let $d= \min\{2,\dist(0,V),\dist(2,V)\}$.  We will only consider
$r,s \leq d/2$. By 
\eqref{feb27.11},
\[  \exc(w, C_d; A_{s,d}) = s^{-1} \,
\exc(w/s, C_{d/s}; A_{d/s}) = \frac{1}{ s \,\log(1/s)} \,
     \left[1+O(\delta)\right].\]
Using this and \eqref{logest} we can see that
\[    \exc(w, V; D_{s,r} \setminus V) = \frac{1}{ s\,\log(1/s)} \,
     \left[1+O(\delta)\right].\]
For $\zeta \in V$, \eqref{feb28.4} gives
\begin{equation} \label{mar3.2}
  h_{D_{s,r}}(\zeta,C_s) 
  = \frac{\log r}{\log(rs)} \, \left[1 + O(\delta)\right]. 
\end{equation}
 Therefore, by \eqref{feb28.2.alt},
 \[  h_{D_{s,r}}(\zeta,w) 
  = \frac{\log r}{2\pi s\log(rs)} \, 
    \left[1 + O(\delta)\right]. \qedhere\]
\end{proof}

The next proposition is the analogue of
Proposition \ref{mar3.1} with
$z = \infty$.

\begin{proposition}  \label{mar3.1.infty}
 Suppose $V$ is a nonpolar
compact set, 
with $0 \notin  V$.
For $0 < s,r < \infty$, let
\[          
          D_{s,r} = \Outer_s \cap \Disk_{1/r} . \]
Then, as $s,r \downarrow 0$, if $|w| = s$,
\[  \frac 1 \pi \, m(w;D_{s,r},D_{s,r}
\setminus V) = \frac{1}{2\pi s^2\,\log(1/s)
 } \,
 \frac{\log r}{\log(rs)} \, \left[1 + O\left(
  {\delta_{r,s}}\right) \right],\]
where $\delta_{r,s} = (\log (1/r))^{-1} + (\log (1/s))^{-1}.$
\end{proposition}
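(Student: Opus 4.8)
The plan is to reduce Proposition~\ref{mar3.1.infty} to Proposition~\ref{mar3.1} by a M\"obius transformation carrying the point $\infty$ --- the analogue of ``$z$'' --- to a finite point. Since $V$ is compact, pick $z_0\in\C$ with $|z_0|$ large enough that neither $z_0$ nor $-z_0$ lies in $V$, and let $\phi$ be the M\"obius map with $\phi(0)=0$, $\phi'(0)=1$, $\phi(\infty)=z_0$ (explicitly $\phi(z)=z_0z/(z+z_0)$); its only pole is at $-z_0\notin V$, so $\phi(V)$ is a compact set disjoint from $\{0,z_0\}$, and it is nonpolar since nonpolarity is conformally invariant. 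Since $\phi(z)=z+O(z^2)$ near $0$ and $\phi(z)=z_0+O(1/z)$ near $\infty$, and since for all small $s,r$ the circles $C_s$ and $C_{1/r}$ avoid the pole $-z_0$ while $-z_0$ lies in both $\Outer_s$ and $\Disk_{1/r}$, the images $\phi(\Outer_s)$ and $\phi(\Disk_{1/r})$ are each exteriors of circles: $\phi(\Outer_s)=\Outer_{s'}(p')$ with $s'\asymp s$ and $|p'|=O(s^2)$, and $\phi(\Disk_{1/r})=\Outer_{r'}(z_0')$ with $r'\asymp r$ and $z_0'=z_0+O(r)$.

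\textbf{Transfer and conclusion.} Post-composing $\phi$ with the similarity $\beta(z)=(s/s')(z-p')$, which maps $\Outer_{s'}(p')$ onto $\Outer_s$, gives a conformal map $\beta\phi$ sending $D_{s,r}=\Outer_s\cap\Disk_{1/r}$ onto $A^{*}:=\Outer_s\cap\Outer_{r''}(z_0'')$, where $r''=(s/s')\,r'\asymp r$ and $z_0''=z_0+O(r)+O(s)$, and sending $V$ onto a compact set $\beta\phi(V)$ disjoint from $\{0,z_0''\}$. By conformal covariance of the bubble measure~\eqref{bubcc}, $m(w;D_{s,r},D_{s,r}\setminus V)=|(\beta\phi)'(w)|^2\,m(\beta\phi(w);A^{*},A^{*}\setminus\beta\phi(V))$, where $|(\beta\phi)'(w)|^2=|\phi'(w)|^2(s/s')^2=1+O(s)$ for $|w|=s$ and $|\beta\phi(w)|=s$ because $\beta\phi$ carries $C_s$ onto $C_s$. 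Now apply Proposition~\ref{mar3.1} to evaluate $m(\beta\phi(w);A^{*},A^{*}\setminus\beta\phi(V))$: since $\log r''=\log r+O(1)$ one has $\delta_{r'',s}=\delta_{r,s}\,[1+O(\delta_{r,s})]$ and $\tfrac{\log r''}{\log(r''s)}=\tfrac{\log r}{\log(rs)}\,[1+O(\delta_{r,s})]$, and $O(s)\subset O(\delta_{r,s})$; feeding these in yields exactly the asserted formula.

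\textbf{Main point to watch.} The step needing care is that the implicit constants in Proposition~\ref{mar3.1} depend on its set ``$V$'' and its point ``$z$'', whereas here these are $\beta\phi(V)$ and $z_0''$, which vary (weakly) with $s$ and $r$. This is harmless: $\beta\phi(V)\to\phi(V)$ in the Hausdorff metric and $z_0''\to z_0$ as $s,r\downarrow0$, and the constants produced by the proof of Proposition~\ref{mar3.1} depend only on coarse features of its configuration --- e.g.\ $\dist(0,V)$, $\dist(z,V)$, $\diam V$, $|z|$ --- which stay bounded away from their degenerate values under these perturbations, so the estimate holds uniformly for all small $s,r$. Alternatively one may bypass the M\"obius reduction and repeat the proof of Proposition~\ref{mar3.1} verbatim, the only change being that $D_{s,r}=A_{s,1/r}$ is now a genuine annulus: for $\zeta\in V$ the estimate $h_{D_{s,r}}(\zeta,C_s)=\tfrac{\log r}{\log(rs)}[1+O(\delta_{r,s})]$ is immediate from~\eqref{logest} in place of~\eqref{feb28.4}; the conditional exit law on $C_s$ from $\zeta\in V$ is nearly uniform by~\eqref{feb28.2.alt} applied (after scaling by $1/s$) to the annulus $A_{1,1/(rs)}$ with auxiliary radius $2\dist(0,V)/s$; and the excursion bound $\exc_{D_{s,r}\setminus V}(w,V)=\tfrac1{s\log(1/s)}[1+O(\delta_{r,s})]$ together with~\eqref{feb27.10} carries over, the point being that a long excursion from $w\in C_s$ that reaches $V$ is very unlikely also to reach the far circle $C_{1/r}$, just as in Proposition~\ref{mar3.1} it is unlikely to reach $C_r(z)$.
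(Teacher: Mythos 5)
Your proposal is correct. Note first that the paper's entire proof is the alternative you sketch in your closing sentences: it simply observes that the argument of Proposition~\ref{mar3.1} goes through unchanged, and is in fact easier because the estimate \eqref{mar3.2} is now immediate from the exact annulus formula \eqref{logest} (your computation $h_{A_{s,1/r}}(\zeta,C_s)=\frac{\log(1/r)-\log|\zeta|}{\log(1/(rs))}=\frac{\log r}{\log(rs)}\,[1+O(\delta_{r,s})]$ is exactly the point). Your primary argument --- pulling $\infty$ down to a finite point $z_0$ by a M\"obius map and citing Proposition~\ref{mar3.1} as a black box via conformal covariance of the bubble measure --- is a genuinely different and valid route; it is more structural and consistent with the M\"obius-invariance philosophy of Theorem~\ref{main}, but it costs two pieces of bookkeeping that the direct proof avoids. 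First, \eqref{bubcc} is stated for conformal maps of planar domains, whereas your $\phi$ has its pole $-z_0$ inside $D_{s,r}$ (for small $r$ the disk $\Disk_{1/r}$ contains $-z_0$), so $\phi(D_{s,r})$ contains $\infty$; you need to observe that the Poisson-kernel and excursion identities behind \eqref{feb27.10} are invariant under M\"obius maps of the sphere because $\{\infty\}$ is polar for planar Brownian motion. This is standard but should be said. Second, as you yourself flag, the implied constants in Proposition~\ref{mar3.1} must be uniform over the $(s,r)$-dependent images $\beta\phi(V)$ and centers $z_0''$; your justification (that those constants depend only on coarse quantities such as $\dist(0,V)$ and $\dist(z,V)$, which converge to nondegenerate limits) is correct but is exactly the kind of checking that the paper's direct rerun of the previous proof makes unnecessary. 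Either route yields the proposition.
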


\begin{proof}  The proof is the same as for the previous
proposition.  In fact, it is slightly easier because
\eqref{mar3.2} is justified by \eqref{logest}.
\maybeqed\end{proof}

\subsection{Brownian loop measure}
\label{sec:blmloop}

\begin{lemma} \label{mar1.lem1}
 Suppose $V_1,V_2$ are closed sets and
$D$ is a domain. 
Let
\[      
         V^j  = \overline { A_{e^{j-1},e^{j}}}, \;\;\;\;
   \Outer^j = \Outer_{e^{j}}, \;\;\;\;
  D^j = D \cap \Outer^j.\]
Then
\begin{equation}  \label{mar2.40}
  \Lambda(V_1,V_2;D) = \sum_{j=-\infty}^\infty
    \Lambda(V_1,V_2,V^{j+1}; D^j) . 
\end{equation}
\end{lemma}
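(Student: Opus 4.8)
The plan is to decompose the loops counted by $\Lambda(V_1,V_2;D)$ according to how close each one comes to the origin, in the same spirit as the rooting formula~\eqref{march1.1}, which roots each loop at its point nearest the origin. For a loop $\gamma$, let $r(\gamma)=\min\{|z|:z\in\gamma\}$ be the modulus of a point of $\gamma$ nearest the origin; this minimum is attained, and it is positive unless $0\in\gamma$. The set of loops through the origin is $\loopm_D$-null: in the construction of $\loopm_\C$, for fixed $t_\gamma$ and fixed normalized bridge the centres $z$ producing a loop through $0$ form the trace of a curve, hence a set of zero area, and Fubini does the rest (and the set is simply empty if $0\notin D$). Since $(0,\infty)=\bigsqcup_{j\in\Z}(e^j,e^{j+1}]$, almost every loop counted by $\Lambda(V_1,V_2;D)$ falls into exactly one class $\mathcal G_j=\{\gamma\subset D:\gamma\cap V_i\ne\emptyset\text{ for }i=1,2,\ \text{and}\ e^j<r(\gamma)\le e^{j+1}\}$, and these classes are pairwise disjoint.

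The key step is to recognize $\mathcal G_j$ as precisely the set of loops measured by $\Lambda(V_1,V_2,V^{j+1};D^j)$, recalling that $V^{j+1}=\overline{A_{e^j,e^{j+1}}}$ and $D^j=D\cap\Outer^j$. On the one hand, $r(\gamma)>e^j$ says exactly that every point of $\gamma$ has modulus greater than $e^j$, i.e.\ $\gamma\subset\Outer^j$, so together with $\gamma\subset D$ it is equivalent to $\gamma\subset D^j$. On the other hand, given $\gamma\subset D^j$, the condition $r(\gamma)\le e^{j+1}$ is equivalent to $\gamma$ containing a point of modulus in $(e^j,e^{j+1}]$; since every point of $\gamma$ already has modulus $>e^j$, this is in turn equivalent to $\gamma$ meeting $\overline{A_{e^j,e^{j+1}}}=V^{j+1}$. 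Hence $\mathcal G_j$ is exactly the family of loops contained in $D^j$ that meet each of $V_1$, $V_2$ and $V^{j+1}$.

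To conclude, I would invoke the restriction property of the Brownian loop measure: since $D^j\subset D$, the restriction of $\loopm_D$ to loops contained in $D^j$ equals $\loopm_{D^j}$, so $\loopm_D(\mathcal G_j)=\Lambda(V_1,V_2,V^{j+1};D^j)$. Summing over $j\in\Z$ and using countable additivity of $\loopm_D$ over the disjoint classes $\mathcal G_j$ (all masses are nonnegative, so the sum is unambiguous in $[0,\infty]$), together with the negligibility of the exceptional loops through the origin, yields $\Lambda(V_1,V_2;D)=\sum_j\loopm_D(\mathcal G_j)=\sum_j\Lambda(V_1,V_2,V^{j+1};D^j)$, which is~\eqref{mar2.40}. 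There is no genuine obstacle in this argument; the only points that require a little care are using \emph{half-open} intervals $(e^j,e^{j+1}]$ so that the classes truly partition the loops with no loop stranded at a critical radius $|z|=e^j$, and checking that loops through the origin form a $\loopm_D$-null set so that they may be discarded.
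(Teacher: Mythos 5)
Your argument is correct and is essentially the paper's own proof: both classify each loop by the modulus of its point nearest the origin, observe that this puts almost every loop into exactly one class $\{e^j<r(\gamma)\le e^{j+1}\}$, identify that class with the loops in $D^j$ meeting $V^{j+1}$, and sum. Your additional care with the half-open intervals, the null set of loops through the origin, and the explicit appeal to the restriction property only makes the same argument slightly more detailed than the paper's version.
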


\begin{proof}  For each unrooted loop, consider the
point on the loop closest to the origin.  The measure
of the set of loops for which the distance to the
origin is exactly $e^j$ for some integer $j$ is $0$.
For
each loop, there is a unique $j$ such that the loop
is in $\Outer^j$ but not in $\Outer^{j+1}$.
Except for a set of loops of measure zero, 
such a loop intersects $V^{j+1}$ but does not intersect
$V^k$ for $k <j+1$, and hence each loop is counted
exactly once on the right-hand side of \eqref{mar2.40}.
   \maybeqed\end{proof}

\begin{lemma}  \label{feb25.lem1}
 There exists $c < \infty$
such that if $0 < s < 1 , R\geq 2$, 
\[   \left|\Lambda(C_1,C_R;\Outer_s)
 -  \log \left[\frac{\log(R/s)}{\log R}\right] 
  \right
| \leq   \frac c {R \, \log R}. \]
In particular, there exists $c < \infty$ such that
if $R \geq 2/s >4$,  
\[ \left|\Lambda(C_1,C_R;\Outer_{s})
-\frac{\log (1/s)}{\log R}\right|
   \leq \frac{c \log^2 (1/s)}{\log^2 R} .\]
\end{lemma}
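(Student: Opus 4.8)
The plan is to rewrite $\Lambda(C_1,C_R;\Outer_s)$ as an integral of Brownian bubble masses via the ``root at the point closest to the origin'' decomposition of the loop measure, and then to substitute the estimate for $\rho(R)=m(1;\Outer,A_R)$ from Lemma~\ref{feb27.lemma2}.

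The first step is a reduction to bubbles. Since $R\ge 2>1$, a loop contained in $\Outer_s$ meets both $C_1$ and $C_R$ if and only if it meets $\overline{\Disk}_1$ and meets $C_R$: a loop having a point of modulus $\le 1$ and a point of modulus $R$ must, by continuity of the modulus along it, also have a point of modulus exactly $1$. I would therefore start from the expression for the loop measure restricted to loops in $\Outer_s$ that meet $\overline{\Disk}_1$ (the $r_1<r$ variant displayed right after \eqref{feb27.3.new}, with $r_1=s$, $r=1$), and further restrict each Brownian bubble $m_{\Outer_u}(ue^{i\theta})$ occurring there---whose root $ue^{i\theta}$ is the point of the loop closest to the origin, so $u\in[s,1]$---to the loops that hit $C_R$. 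Writing $\beta_u(R)$ for the mass of that restriction, rotational invariance makes $\beta_u(R)$ independent of $\theta$ and the covariance rule~\eqref{bubcc} applied to $z\mapsto z/u$ gives $\beta_u(R)=u^{-2}\,\rho(R/u)$ (because $m_{\Outer}(1)$ restricted to loops hitting $C_{R/u}$ is $m_{\Outer}(1)-m_{A_{R/u}}(1)$, of mass $\rho(R/u)$). This yields
\[
\Lambda(C_1,C_R;\Outer_s)
=\frac1\pi\int_0^{2\pi}\!\!\int_s^1 u^{-2}\rho(R/u)\,u\,du\,d\theta
=2\int_s^1\frac{\rho(R/u)}{u}\,du .
\]

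The second step inserts Lemma~\ref{feb27.lemma2}. For $u\in[s,1]$ one has $R/u\ge R\ge 2$, so $\rho(R/u)=\tfrac12(\log(R/u))^{-1}+O\!\bigl(u\,(R\log(R/u))^{-1}\bigr)$. The main term is evaluated by the substitution $v=\log(R/u)$ (so $du/u=-dv$):
\[
\int_s^1\frac{du}{u\log(R/u)}=\int_{\log R}^{\log(R/s)}\frac{dv}{v}
=\log\!\left[\frac{\log(R/s)}{\log R}\right],
\]
while the error term contributes $O\!\bigl(R^{-1}\int_s^1(\log(R/u))^{-1}du\bigr)=O\!\bigl((R\log R)^{-1}\bigr)$ since $\log(R/u)\ge\log R$ throughout $[s,1]$. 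This gives the first displayed bound. For the ``in particular'' assertion, with $R\ge 2/s>4$ put $x=\log(1/s)/\log R$; then $0<x<1$ (since $\log R>\log(2/s)>\log(1/s)$), $\log[\log(R/s)/\log R]=\log(1+x)$, and the elementary inequality $|\log(1+x)-x|\le x^2$ turns the first bound into $\bigl|\Lambda(C_1,C_R;\Outer_s)-\log(1/s)/\log R\bigr|\le c\,(R\log R)^{-1}+(\log(1/s))^2(\log R)^{-2}$. Finally, $R\ge 4$ forces $(\log R)/R\le(\log 4)/4$ and $s<1/2$ forces $(\log(1/s))^2\ge(\log 2)^2$, so the first term is bounded by a constant multiple of the second, and enlarging $c$ finishes the proof.

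I do not expect a genuine obstacle; once the bubble representation is established, the estimate is just Lemma~\ref{feb27.lemma2}, one integration, and an elementary bound on $\log(1+x)$. The step requiring the most care is the first one: one must check that restricting the closest-root decomposition to loops hitting $C_R$ recovers \emph{precisely} the loops counted by $\Lambda(C_1,C_R;\Outer_s)$ (equivalently, the equivalence ``meets $C_1$ and $C_R$'' $\iff$ ``root modulus in $[s,1]$ and meets $C_R$''), and that the exponent in $\beta_u(R)=u^{-2}\rho(R/u)$ is exactly $-2$ as mandated by~\eqref{bubcc}.
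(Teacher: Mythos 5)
Your proposal is correct and follows essentially the same route as the paper: the closest-root bubble decomposition giving $\Lambda(C_1,C_R;\Outer_s)=2\int_s^1 u^{-1}\rho(R/u)\,du$, insertion of Lemma~\ref{feb27.lemma2}, the logarithmic substitution for the main term, and the expansion $\log(1+x)=x+O(x^2)$ for the second assertion. Your extra care in justifying the bubble representation and in absorbing the $O((R\log R)^{-1})$ error into the $\log^2(1/s)/\log^2 R$ bound is sound and merely makes explicit what the paper leaves implicit.
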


\begin{proof}
By \eqref{feb27.3.new}, 
rotational invariance, and the scaling rule, we get
\[    \Lambda(C_1,C_R;\Outer_s)
 =
2 \int_{s}^1  r\, 
m(r; \Outer_{r},
    A_{r,R}) \, dr =
   2 \int_{s}^1  r^{-1} \, \rho(R/r) \, dr, \]
where $\rho$ is as in Lemma \ref{feb27.lemma2}.
From that lemma, we know that
\[  \rho(R/r) = \frac{1}{2\log(R/r)} + O\left(\frac{r}{
     R \log (R/r)}\right), \]
and hence
\[   \Lambda(C_1,C_R;\Outer_s)
=    O\left(\frac 1{R \log R}\right) +
 \int_s^1  \frac{1}{r \, (\log R - \log r)} \, dr
     .\]
The first assertion follows by integrating and the
second from the expansion
\[ \log \left[\frac{\log(R/s)}{\log R}\right] =
   \frac{\log(1/s)}{\log R} + O
\biggl(\frac{\log^2(1/s)}{\log^2R}\biggr). \qedhere\]
\end{proof}

\begin{lemma}  \label{mar1.lemma3}
Suppose $V$ is a closed, nonpolar set
with $0 \notin V$ and $\alpha > 1$.
 There exists $c = c_{V,\alpha} < \infty$
such that for sufficiently small $r$,
\[ \left|\Lambda(V,\Outer_r\setminus 
\Outer_{\alpha r}; \Outer_r) - \frac{\log \alpha}{\log(1/r)}
  \right| \leq \frac{c}{\log^2(1/r)}. \]
 \end{lemma}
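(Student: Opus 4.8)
The plan is to compute $\Lambda(V,\Outer_r\sm\Outer_{\alpha r};\Outer_r)$ by rooting each unrooted loop at its point closest to the origin, so that the loop measure becomes an integral of Brownian bubble masses, and then to substitute the bubble estimate supplied by the scaled form of Corollary~\ref{mar1.lemma2}. The resulting one–variable integral is then evaluated explicitly.

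Two preliminary reductions. First, since $\loopm$ is invariant under $z\mapsto\lambda z$, replacing $(V,r)$ by $(\lambda V,\lambda r)$ changes $\log(1/r)$ only by the additive constant $\log\lambda$; because $1/\log(1/\lambda r)=1/\log(1/r)+O(1/\log^2(1/r))$ with constant depending on $\lambda$, both the main term $\log\alpha/\log(1/r)$ and the error $O(1/\log^2(1/r))$ are preserved. Hence we may assume $\dist(0,V)\ge1$, so $\C\sm V$ contains $\D$; a routine further reduction (replace $V$ by the boundary of the component of $\C\sm V$ containing $0$, which for small $s$ does not change which loops in $\Outer_s$ meet $V$, and which is in any case automatic in the applications, where $V$ is a domain complement) lets us take $D:=\C\sm V$ to be a nonpolar domain containing $\D$. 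Second, a loop lying in $\Outer_r$ meets $\Outer_r\sm\Outer_{\alpha r}=\{z:r<|z|\le\alpha r\}$ exactly when it meets $\overline\D_{\alpha r}$. Applying the displayed formula following~\eqref{feb27.3.new} (with inner radius $r$ and outer radius $\alpha r$) and restricting both sides to loops that also meet $V$ yields
\[
\Lambda(V,\Outer_r\sm\Outer_{\alpha r};\Outer_r)=\frac1\pi\int_0^{2\pi}\int_r^{\alpha r} m\bigl(se^{i\theta};\Outer_s,\Outer_s\sm V\bigr)\,s\,ds\,d\theta,
\]
since the restriction of $m_{\Outer_s}(se^{i\theta})$ to loops meeting $V$ has total mass $m(se^{i\theta};\Outer_s,\Outer_s\sm V)$, and $\Outer_s\sm V$ agrees with $\Outer_s$ near $se^{i\theta}$ because $s\le\alpha r<\dist(0,V)$ for $r$ small.

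Now I would insert the scaled version of Corollary~\ref{mar1.lemma2} (the remark following it), applied with this $D$: uniformly in $\theta$ and in $s\in(r,\alpha r]$,
\[
s^2\,m\bigl(se^{i\theta};\Outer_s,\Outer_s\sm V\bigr)=\frac1{2\log(1/s)}+O\!\left(\frac1{\log^2(1/s)}\right),
\]
with implicit constant depending only on $V$. Carrying out the trivial $\theta$-integration,
\[
\Lambda(V,\Outer_r\sm\Outer_{\alpha r};\Outer_r)=\int_r^{\alpha r}\frac{ds}{s\log(1/s)}+O\!\left(\int_r^{\alpha r}\frac{ds}{s\log^2(1/s)}\right).
\]
The substitution $u=\log(1/s)$ turns the first integral into $\int_{\log(1/r)-\log\alpha}^{\log(1/r)}u^{-1}\,du=-\log\bigl(1-\log\alpha/\log(1/r)\bigr)=\log\alpha/\log(1/r)+O(1/\log^2(1/r))$ and the error integral into $\int u^{-2}\,du=O(1/\log^2(1/r))$, with constants depending on $\alpha$. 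Combining these proves the lemma, with $c=c_{V,\alpha}$ absorbing the $\alpha$-dependence of these two remainders together with the $V$-dependence inherited from Corollary~\ref{mar1.lemma2}. I expect the only genuinely fussy point to be the topological/polarity bookkeeping in the reduction to ``$D=\C\sm V$ a nonpolar domain containing $\D$''; the bubble estimate and the one-variable integral are routine.
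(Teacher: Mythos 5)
Your proposal is correct and follows essentially the same route as the paper: scale so that $\dist(0,V)=1$, write the loop measure as the integral $\frac1\pi\int_0^{2\pi}\int_r^{\alpha r}m(se^{i\theta};\Outer_s,\Outer_s\sm V)\,s\,ds\,d\theta$ by rooting at the point closest to the origin, insert the scaled Corollary~\ref{mar1.lemma2}, and evaluate $\int_r^{\alpha r}\frac{ds}{s\log(1/s)}$. The only cosmetic difference is your extra care about the connectivity of $\C\sm V$, which the paper passes over silently.
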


\begin{proof}  By scaling, we may assume that
$\dist(0,V) = 1$.  It suffices to prove
the result for $r$ sufficiently small.
 By \eqref{feb27.3.new}, we have
\[  \Lambda(V,\Outer_r\setminus 
\Outer_{\alpha r}; \Outer_r) = \frac 1 \pi
  \int_0^{2\pi} \int_r^{\alpha r} m
  (s e^{i\theta};\Outer_s,D_s) \,s\, ds \, d\theta, \]
where $D_s = \Outer_s \setminus V$. 
By Corollary \ref{mar1.lemma2}, for $r \leq s \leq \alpha r$,
\[  m
  (s e^{i\theta};\Outer_s,D_s) = \frac{1}{2
 s^2 \, \log(1/s)} \, \left[1 + O
\left(\frac{1}{\log (1/r)}
  \right) \right].\]
Therefore,
\[  \Lambda(V,\Outer_r\setminus 
\Outer_{\alpha r}; \Outer_r) = 
\left[1 + O
\left(\frac{1}{\log (1/r)}
  \right) \right] \int_r^{\alpha r}
  \frac{ds}{s \,\log(1/s)}.\]
Also,
\[
\begin{aligned}
 \int_r^{\alpha r}
  \frac{ds}{s \,\log(1/s)} &=
 \log\log\left(\frac 1 r\right)
 - \log \log \left(\frac 1{\alpha r}\right)\\
 &= \frac{\log \alpha}{\log (1/r)} + O\left(
 \frac 1{\log^2(1/r)}\right).
\end{aligned}\tag*{\raisebox{-1.1\baselineskip}{\qedhere}}
\]
\end{proof}

\begin{lemma}  \label{feb27.newlem}
Suppose $V_1,V_2$ are
nonpolar
closed subsets of the Riemann sphere with $0
\notin V_1$.  Then there exists $c 
 = c_{V_1,V_2} < \infty$
such that for all $r \leq \dist(0,V_1)/2$,
\begin{equation}
  \label{feb27.4}
\Lambda(V_1, C_r; \C \setminus
  V_2) \leq \frac{c}{\log(1/r)}. 
\end{equation}
\end{lemma}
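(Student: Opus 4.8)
\noindent\textit{Proof proposal.} The plan is to dominate $\Lambda(V_1,C_r;\C\sm V_2)$ by the measure of loops meeting both $\overline{\D}_r$ and $V_1$, and to estimate that by writing each loop with the root chosen closest to the origin, as in~\eqref{feb27.3.new}. Two reductions come first. By scaling invariance of the loop measure we may assume $\dist(0,V_1)\ge 4$. Next, since a countable union of polar sets is polar and $V_2\sm\{0\}=\bigcup_n V_2\cap\Outer_{1/n}$ (each term further the union of the $V_2\cap\Outer_{1/n}\cap\D_m$), the nonpolar set $V_2$ has a compact nonpolar subset $\tilde V_2$ that avoids a closed disk $\overline{\D}_{r_0}$ about the origin, with $r_0\le\tfrac14$. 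Replacing $V_2$ by $\tilde V_2$ only increases $\Lambda(V_1,C_r;\C\sm V_2)$, so we may assume $V_2$ compact with $V_2\cap\overline{\D}_{r_0}=\eset$. Then the component $D$ of $\C\sm V_2$ containing the origin is a nonpolar domain to which~\eqref{feb28.2.alter} applies, and since $\overline{\D}_r$ and $V_1$ lie far apart, every loop counted by $\Lambda(V_1,C_r;\C\sm V_2)$ (with $r<r_0$) lies in $D$. It suffices to prove the bound for $r$ below the positive constant $r_0=r_0(V_1,V_2)$, which is all that is needed.

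For such $r$ we have $\overline{\D}_r\subset D$, and since every loop meeting $C_r$ meets $\overline{\D}_r$, applying~\eqref{feb27.3.new} to the loops that in addition meet $V_1$ gives
\[
\Lambda(V_1,C_r;\C\sm V_2)\le
\frac1\pi\int_0^{2\pi}\!\!\int_0^r m\bigl(se^{i\theta};D_s,D_s\sm V_1\bigr)\,s\,ds\,d\theta,
\qquad D_s:=D\cap\Outer_s .
\]
The heart of the matter is the bound $m(w;D_s,D_s\sm V_1)\le c\,s^{-2}\,[\log(1/s)]^{-2}$ for $|w|=s$ small. I would obtain it from~\eqref{feb27.10} (applied, if $V_1$ is not smooth, to a decreasing sequence of smooth closed neighborhoods of $V_1$ and passing to the limit), which bounds the left side by $\pi\,\exc_{D_s\sm V_1}(w,V_1)\cdot\sup_{\zeta\in V_1}h_{D_s}(\zeta,w)$. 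Since $|\zeta|\ge4$ for every $\zeta\in V_1$, \eqref{feb28.2.alter} bounds the supremum by $c\,(s\log(1/s))^{-1}$. For the excursion factor, any excursion from $w$ to $V_1$ inside $\Outer_s$ must reach $C_{1/2}$ before returning to $C_s$, so~\eqref{logest} gives $h_{D_s\sm V_1}(w+\epsilon{\bf n},V_1)\le\log(1+\epsilon/s)/\log(1/2s)$; dividing by $\epsilon$ and letting $\epsilon\downarrow0$ shows $\exc_{D_s\sm V_1}(w,V_1)\le c\,(s\log(1/s))^{-1}$ as well. Multiplying the two bounds yields the claim.

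Substituting into the integral,
\[
\Lambda(V_1,C_r;\C\sm V_2)\le 2c\int_0^r\frac{ds}{s\,\log^2(1/s)}=\frac{2c}{\log(1/r)},
\]
which is the assertion. The main obstacle is the bubble estimate of the second paragraph: one must recognize that forcing a small bubble rooted near the origin to reach the distant set $V_1$ costs a factor $\delta_s=1/\log(1/s)$ beyond the total bubble mass $\asymp s^{-2}\delta_s$, and extract it by peeling off one ``excursion out to unit scale'' (contributing $(s\log(1/s))^{-1}$ through $\exc_{D_s\sm V_1}(w,V_1)$) followed by a ``return from $V_1$'' (contributing another factor through the Poisson kernel $h_{D_s}(\cdot,w)$). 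The preliminary normalizations---handling a noncompact $V_2$, a $V_2$ meeting the origin, a disconnected $\C\sm V_2$, or a nonsmooth $V_1$---are routine and should simply be recorded.
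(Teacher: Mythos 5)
Your proof is correct and follows essentially the same route as the paper's: decompose via~\eqref{feb27.3.new} into bubbles rooted at the point of each loop closest to the origin, bound each bubble mass via~\eqref{feb27.10} as the product of an excursion term (controlled by comparison with an annulus) and a Poisson-kernel term (controlled by~\eqref{feb28.2.alter}), each of order $(s\log(1/s))^{-1}$, and integrate. The only substantive difference is at the end: the lemma is stated for all $r\le\dist(0,V_1)/2$, and the paper extends from small $r$ to the full range by splitting with the cascade identity~\eqref{cascade} and invoking Lemma~\ref{diamlem} for finiteness, whereas your restriction to $r\le r_0(V_1,V_2)$ leaves out this short but nonvacuous step.
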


\begin{proof}  Constants in this proof depend on
$V_1,V_2$. Without loss of generality assume
$0 \notin V_2$ and let $D_r = \Outer_r \setminus
V_2$.  We will first prove the result
for $r \leq r_0 = [\dist(0,V_1) \wedge \dist(0,V_2)]/2$. 
By \eqref{feb27.3.new}, we have 
\begin{equation}
\Lambda(V_1, C_r; \C \setminus
  V_2)  = \frac 1 \pi \int_{|z| \leq r}
   m(z;D_{|z|}, D_{|z|} \setminus
 V_1) \, dA(z) . 
\end{equation}
By \eqref{feb28.2.alter},
\[        h_{D_r}(w,z) \leq \frac{c} {
   r\, \log(1/r)}, \;\;\;\;\;  w \in V_1,
  \;\;\; |z| = r. \]
By comparison with an annulus, we get
\[   \exc_{D_{r}\setminus
 V_1}(z,V_1) \leq \frac{c}{r \, \log
 (1/r)}, \;\;\;\; |z| = r.\]
Using \eqref{feb27.10},
we then have 
\[  \frac 1 \pi \, m(z;D_{|z|}, D_{|z|} \setminus
 V_1) \leq \frac{c}{|z|^2 \, \log^2 (1/|z|)} .\]
By integrating, we get \eqref{feb27.4} for $r \leq r_0$.

 Let $r_1 = \dist(0,V_1)/2$ 
and  note that
\[ \Lambda(V_1, C_{r_1}; \C \setminus
  V_2)  = \Lambda(V_1, C_{r_0}; \C \setminus
  V_2)  + \Lambda(V_1, C_{r_1}; \Outer_{r_0} \setminus
  V_2 ) .\]
Using Lemma \ref{diamlem} we can see that
$ \Lambda(V_1, C_{r_1}; \Outer_{r_0} \setminus
  V_2 ) < \infty $.
Therefore,
\[  \Lambda(V_1, C_{r_1}; \C \setminus
  V_2)  < \infty , \]
and we can conclude \eqref{feb27.4} for $r_0 \leq r \leq
r_1$ with  a different constant.
\maybeqed\end{proof}

\begin{corollary}  \label{easycor}
Suppose $V_1,V_2$ are disjoint closed subsets of
the Riemann sphere and $D$ is a nonpolar domain.
Then
\[    \Lambda(V_1,V_2;D) < \infty. \]
\end{corollary}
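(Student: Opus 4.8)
The plan is to deduce the corollary quickly from Lemmas~\ref{feb27.newlem} and~\ref{diamlem}, by splitting the loops counted by $\Lambda(V_1,V_2;D)$ according to whether or not they approach a conveniently chosen interior point of $D$. First I would dispose of degenerate cases. If $V_1$ or $V_2$ is polar then the Brownian loop measure assigns no mass to the loops meeting it, so $\Lambda(V_1,V_2;D)=0$; hence assume both are nonpolar. Since $V_1,V_2$ are disjoint closed subsets of the Riemann sphere, $\infty$ lies in at most one of them, and, relabelling if necessary, I assume $\infty\notin V_2$, so that $V_2$ is compact; fix $M$ with $V_2\subset\overline\D_M$. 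If $D\subset V_1\cup V_2$, then connectedness of $D$ and disjointness of $V_1,V_2$ force $D\subset V_1$ or $D\subset V_2$, whence $\Lambda(V_1,V_2;D)=0$ again; so $D\setminus(V_1\cup V_2)\ne\emptyset$, and after a translation (a conformal automorphism of $\C$, so it preserves $\loopm_\C$ and hence $\Lambda$) I assume $0\in D\setminus(V_1\cup V_2)$. Put $\epsilon=\min\{\dist(0,V_1),\dist(0,\partial D)\}>0$ and fix $\rho$ with $0<\rho\le\tfrac12\min\{\epsilon,1\}$, so that $\overline\D_\rho\subset D$, $V_1\subset\Outer_{2\rho}$, $\rho\le\dist(0,V_1)/2$, and $\rho<M+1$.

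Next I would use the decomposition rule~\eqref{cascade} with the extra set $\overline\D_\rho$:
\[
\Lambda(V_1,V_2;D)=\Lambda(V_1,V_2,\overline\D_\rho;D)+\Lambda(V_1,V_2;D\cap\Outer_\rho).
\]
For the second term, every contributing loop lies in $\Outer_\rho$, meets $V_2\subset\overline\D_M\subset\D_{M+1}$, and, meeting the two disjoint closed sets $V_1$ and $V_2$, has diameter at least $\dist(V_1,V_2)>0$ (this distance is positive since $V_2$ is compact and disjoint from the closed set $V_1$); hence that term is at most the loop measure of loops in $\Outer_\rho$ of diameter at least $\dist(V_1,V_2)$ that meet $\D_{M+1}$, which is finite by Lemma~\ref{diamlem}. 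For the first term, I would discard the constraint of meeting $V_2$ and use $D\subset\C\setminus\partial D$ together with the restriction property of the loop measure to bound it by $\Lambda(V_1,\overline\D_\rho;\C\setminus\partial D)$; since $V_1\subset\Outer_{2\rho}$, every loop meeting both $V_1$ and $\overline\D_\rho$ crosses $C_\rho$, so this equals $\Lambda(V_1,C_\rho;\C\setminus\partial D)$, which is finite by Lemma~\ref{feb27.newlem} with the nonpolar closed set $\partial D$ in the role of ``$V_2$'' there (its hypotheses hold because $0\notin V_1$ and $\rho\le\dist(0,V_1)/2$). Both terms being finite, the corollary follows.

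I do not expect a genuine obstacle. The only points needing a little care are the choice of the base point $0$ inside $D$ and off $V_1$, handled by the connectedness argument, and the observation that disjointness of $V_1$ and $V_2$ on the sphere forces one of them to be compact — which is precisely what lets Lemma~\ref{diamlem} control the loops that stay away from $0$.
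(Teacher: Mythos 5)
Your proof is correct and follows essentially the same route as the paper: split the loops according to whether they approach a base point, control the nearby loops via Lemma~\ref{feb27.newlem} applied with the nonpolar set $\p D$, and control the far-away loops via Lemma~\ref{diamlem} using that one of $V_1,V_2$ is compact and $\dist(V_1,V_2)>0$. The paper states this more tersely as the one-line bound $\Lambda(V_1,V_2;D)\le\Lambda(V_1,\overline\Disk_s;D)+\Lambda(V_1,V_2;\Outer_s)$ after assuming $0\notin V_1$; your extra care with the polar and degenerate cases and with placing the base point inside $D$ is harmless and only makes the argument more complete.
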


\begin{proof}  Assume $0 \notin V_1$.
  Lemma
\ref{feb27.newlem} shows that 
  $\Lambda(V_1,
\overline \Disk_s; D) < \infty$ for some
$s > 0$.
Note that
\[  \Lambda(V_1,V_2;D) \leq
 \Lambda(V_1,
\overline \Disk_s; D) + \Lambda(V_1,V_2;\Outer_s) . \]
Since at least one of $V_1,V_2$ is compact, 
 Lemma \ref{diamlem}
implies that
\[   \Lambda(V_1,V_2;\Outer_s) < \infty.\qedhere\]
\end{proof}

\begin{theorem}  \label{main1}
 Suppose $V_1,V_2$ are disjoint,
nonpolar
closed subsets of the Riemann sphere.
 Then the limit
\begin{equation}  \label{feb27.1}
  \Lambda^*(V_1,V_2) = \lim_{r \downarrow 0}
    \left[\Lambda(V_1,V_2;\Outer_r) - \log \log(1/r)
\right] 
\end{equation}
exists.
\end{theorem}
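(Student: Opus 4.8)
The plan is to prove the limit first along the sequence $r=e^{-n}$, $n\to\infty$, and then to pass to arbitrary $r\downarrow0$ by a monotonicity sandwich. Since $V_1,V_2$ are disjoint, at most one contains the origin, so relabelling we may assume $0\notin V_1$. I first treat the case $0\notin V_1\cup V_2$: here conformal invariance of the loop measure gives $\Lambda(V_1,V_2;\Outer_r)=\Lambda(\lambda V_1,\lambda V_2;\Outer_{\lambda r})$ for $\lambda>0$, while $\log\log(1/\lambda r)-\log\log(1/r)\to0$, so after dilating I may assume $\dist(0,V_i)\ge1$. Now apply Lemma~\ref{mar1.lem1} with $D=\Outer_{e^{-n}}$. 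For $j\le-n-1$ the contribution vanishes, since a loop confined to $\Outer_{e^{-n}}$ cannot meet $V^{j+1}\subseteq\overline{\Disk_{e^{-n}}}$; for $j\ge-n$ the ambient domain is $\Outer_{e^j}$. Writing $a_j:=\Lambda(V_1,V_2,\overline{A_{e^j,e^{j+1}}};\Outer_{e^j})$ I obtain, splitting at $j=0$ and using Lemma~\ref{mar1.lem1} once more with $D=\Outer_1$,
\[ \Lambda(V_1,V_2;\Outer_{e^{-n}})=\sum_{j\ge-n}a_j=\Lambda(V_1,V_2;\Outer_1)+\sum_{k=1}^{n}a_{-k}, \]
with $\Lambda(V_1,V_2;\Outer_1)<\infty$ by Corollary~\ref{easycor}.

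The heart of the argument is the asymptotic $a_{-k}=k^{-1}+O(k^{-2})$. A loop confined to $\Outer_s$ meets $\overline{A_{s,es}}$ exactly when its point closest to the origin has modulus in $[s,es]$, so by \eqref{feb27.3.new} in the form for loops confined to $\Outer_{e^{-k}}$,
\[ a_{-k}=\frac1\pi\int_0^{2\pi}\!\!\int_{e^{-k}}^{e^{-k+1}}\nu_s(se^{i\theta})\,s\,ds\,d\theta, \]
where $\nu_s(w)$ is the mass of the loops in the bubble measure $m_{\Outer_s}(w)$ meeting both $V_1$ and $V_2$. By inclusion--exclusion over the events that the loop meets $V_1$, resp.\ meets $V_2$,
\[ \nu_s(w)=m(w;\Outer_s,\Outer_s\sm V_1)+m(w;\Outer_s,\Outer_s\sm V_2)-m(w;\Outer_s,\Outer_s\sm(V_1\cup V_2)). \]
Each of $\C\sm V_1$, $\C\sm V_2$, $\C\sm(V_1\cup V_2)$ is a nonpolar domain containing $\Disk$, so the scaled form of Corollary~\ref{mar1.lemma2} (the remark after it) gives, uniformly over $|w|=s$,
\[ m(w;\Outer_s,\Outer_s\sm V_i)=\frac1{2s^2\log(1/s)}+O\!\left(\frac1{s^2\log^2(1/s)}\right), \]
and likewise with $V_1\cup V_2$ in place of $V_i$. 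The three leading terms cancel down to a single copy of $\tfrac1{2s^2\log(1/s)}$, so $\nu_s(se^{i\theta})=\tfrac1{2s^2\log(1/s)}+O(\tfrac1{s^2\log^2(1/s)})$ uniformly in $\theta$; integrating, with $\int_{e^{-k}}^{e^{-k+1}}\tfrac{ds}{s\log(1/s)}=\log\tfrac{k}{k-1}=k^{-1}+O(k^{-2})$ and $\int_{e^{-k}}^{e^{-k+1}}\tfrac{ds}{s\log^2(1/s)}=O(k^{-2})$, gives the claim.

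Since $\sum_{k\le n}k^{-1}-\log n$ converges and the errors in $a_{-k}=k^{-1}+O(k^{-2})$ are summable, the decomposition shows that $\Lambda(V_1,V_2;\Outer_{e^{-n}})-\log n$ converges to some $L$; note $\log\log(1/e^{-n})=\log n$. For arbitrary $r\in(0,e^{-1})$, set $n=\lfloor\log(1/r)\rfloor$, so $\Outer_{e^{-n}}\subseteq\Outer_r\subseteq\Outer_{e^{-n-1}}$ and $\log n\le\log\log(1/r)<\log(n+1)$; monotonicity of the loop measure in the domain then squeezes $\Lambda(V_1,V_2;\Outer_r)-\log\log(1/r)$ between $\Lambda(V_1,V_2;\Outer_{e^{-n}})-\log(n+1)$ and $\Lambda(V_1,V_2;\Outer_{e^{-n-1}})-\log n$, both tending to $L$. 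This proves \eqref{feb27.1} when $0\notin V_1\cup V_2$. If instead $0\in V_2$: since $V_2$ is nonpolar, $V_2\sm\{0\}$ is not a countable union of polar sets, so $V_2\cap\Outer_\epsilon$ is nonpolar for all small $\epsilon>0$; fix $\epsilon<\dist(0,V_1)$ with $V_2':=V_2\cap\overline{\Outer_\epsilon}$ nonpolar. By \eqref{cascade} and $V_2'\subseteq V_2$, $\Lambda(V_1,V_2;\Outer_r)=\Lambda(V_1,V_2';\Outer_r)+\Lambda(V_1,V_2;\Outer_r\sm V_2')$. The first term minus $\log\log(1/r)$ converges by the case just proved (now $0\notin V_1\cup V_2'$); the second is nondecreasing as $r\downarrow0$, and since each loop it counts avoids $V_2'$ and hence meets $V_2$ inside $\overline{\Disk_\epsilon}$, it is bounded above by $\Lambda(V_1,V_2\cap\overline{\Disk_\epsilon};\C\sm V_2')<\infty$ (Corollary~\ref{easycor}, as $V_1$ and $V_2\cap\overline{\Disk_\epsilon}$ are disjoint closed sets and $\C\sm V_2'$ is nonpolar), so it converges too. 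Adding the two limits proves \eqref{feb27.1} in general.

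The main obstacle is the cancellation behind $a_{-k}=k^{-1}+O(k^{-2})$: the three bubble masses in the inclusion--exclusion must agree to relative error $O(1/\log)$, which is exactly (the scaled form of) Corollary~\ref{mar1.lemma2}. Heuristically, a Brownian bubble that ventures far from the origin is---up to a relative error $O(1/\log)$---equally likely to hit $V_1$, to hit $V_2$, or to hit $V_1\cup V_2$, so demanding that it hit both costs only a lower-order factor; everything else is bookkeeping with the shell decomposition and the logarithmic integrals.
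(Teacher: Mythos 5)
Your proof is correct, and it shares the paper's overall skeleton---the shell decomposition of $\Lambda(V_1,V_2;\Outer_{e^{-n}})$ into terms $a_{-k}$ of size $k^{-1}+O(k^{-2})$, summed against the harmonic series, followed by a monotonicity squeeze for general $r$---but the key per-shell estimate is obtained by a genuinely different route. The paper never proves a two-set shell asymptotic: it applies Lemma~\ref{mar1.lemma3} to get the \emph{one-set} estimate $\Lambda(V_1,\Outer^{k}\sm\Outer^{k-1};\Outer^k)=k^{-1}+O(k^{-2})$, and then controls the nonnegative corrections $\Lambda(V_1,\Outer^{j}\sm\Outer^{j-1};\Outer^j\sm V_2)$ by replacing $V_2$ with a nonpolar closed $\hat V_2\subset V_2$ bounded away from $0$, so that the corrections telescope over $j\ge k$ to $\Lambda(V_1,\overline{\Disk_{e^{-k}}};\C\sm\hat V_2)\le c/k$ via Lemma~\ref{feb27.newlem}. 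You instead prove the two-set asymptotic head-on by inclusion--exclusion on the bubble mass, $\nu_s=m(\cdot;\Outer_s,\Outer_s\sm V_1)+m(\cdot;\Outer_s,\Outer_s\sm V_2)-m(\cdot;\Outer_s,\Outer_s\sm(V_1\cup V_2))$, and three applications of the scaled form of Corollary~\ref{mar1.lemma2}; the leading terms cancel down to a single $\tfrac1{2s^2\log(1/s)}$ exactly as you say, and the subtraction is legitimate because the last bubble mass is finite (the sets are bounded away from the root). Each version buys something: yours makes the mechanism transparent (a bubble that travels far hits any fixed nonpolar set up to a relative $O(1/\log)$ error) and bypasses Lemmas~\ref{mar1.lemma3} and~\ref{feb27.newlem}; the paper's $\hat V_2$ device absorbs the case $0\in V_2$ with no case split and yields an error depending on $V_1$ and $\hat V_2$ but not otherwise on $V_2$, which is exactly what the subsequent remark and~\eqref{cont} require, whereas your constants depend on both sets. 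Your separate treatment of $0\in V_2$---splitting off $V_2'=V_2\cap\overline{\Outer_\epsilon}$ via~\eqref{cascade} and bounding the leftover term monotonically by $\Lambda(V_1,V_2\cap\overline{\Disk_\epsilon};\C\sm V_2')<\infty$---is correct. Two small points to make explicit in a write-up: the asymptotic for $a_{-k}$ is only claimed (and only needed) once $e^{-k+1}<1/2$, so that the scaled corollary applies on the whole shell, the finitely many earlier terms being finite by Corollary~\ref{easycor}; and the closest-point identification of $a_{-k}$ with the bubble integral holds up to the measure-zero set of loops whose closest point lies exactly on a dyadic circle.
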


\begin{proof} 
Without loss of generality, assume that $\dist(0,V_1)
\geq 2$ and let $\Outer^k = \Outer_{e^{-k}}$. Let
$\hat V_2 \subset V_2$ be a nonpolar closed subset
with $0 \notin \hat V_2$. 
Constants in the proof depend on $V_1,V_2$.
Since $\Lambda(V_1,V_2;\Outer_r) $ increases as $r$
decreases to $0$, it suffices to establish the limit
\[ \lim_{k \rightarrow \infty}
    \left[\Lambda(V_1,V_2;\Outer^k) - \log  k
\right].\]
Repeated application of \eqref{cascade}
shows that if
 $k \geq 1$,
\[   \Lambda(V_1,V_2;\Outer^k)
  = \Lambda(V_1,V_2;\Outer^0) + \sum_{j=1}^k
       \Lambda(V_1,V_2, \Outer^{j-1}
 \setminus \Outer^j; \Outer^j).\]
Similarly, for fixed $k$, \eqref{cascade} implies
\begin{align*}
\lefteqn{\Lambda(V_1, \Outer^{k-1}
 \setminus \Outer^k; \Outer^k)
- \Lambda(V_1,V_2, \Outer^{k-1}
 \setminus \Outer^k; \Outer^k)}\hspace{14em}\\
& = \Lambda(V_1 , \Outer^{k-1}
 \setminus \Outer^k; \Outer^k \setminus V_2)\\
 & \leq  \Lambda(V_1 , \Outer^{k-1}
 \setminus \Outer^k; \Outer^k \setminus \hat V_2).
\end{align*}

From Lemma \ref{mar1.lemma3},
we can see that
 \[ \Lambda(V_1, \Outer^{k-1}
 \setminus \Outer^k; \Outer^k) = \frac 1 k +
  O_{V_1}\left(\frac 1 {k^2}\right), \]
and hence
the limit
\[   \lim_{k \rightarrow \infty}
   \biggl[-\log k + \sum_{j=1}^k
       \Lambda(V_1, \Outer^{k-1}
 \setminus \Outer^k; \Outer^k)\biggr]  \]
exists and is finite.
By
Lemma \ref{feb27.newlem}, we see that 
\[   \sum_{j=k}^\infty 
\Lambda(V_1 , \Outer^{j-1}
 \setminus \Outer^j; \Outer^j \setminus \hat V_2)
  = \Lambda(V_1,\overline \Disk^k; 
\C \setminus \hat V_2) 
 \leq \frac{c}{k}, \]
and hence
\[  \sum_{j=k}^\infty 
\left[\Lambda(V_1, \Outer^{j-1}
 \setminus \Outer^j; \Outer^j)
- \Lambda(V_1,V_2, \Outer^{j-1}
 \setminus \Outer^j; \Outer^j)
\right]     \leq \frac c k,           \]
where the constant $c$ depends on $V_1$
and $\hat V_2$ but not otherwise on $V_2$.
\maybeqed\end{proof}

\begin{remark}
 It follows from the proof that  
\[  \Lambda^*(V_1,V_2) = 
   \Lambda(V_1,V_2;\Outer^k) - \log k +
   O\left(\frac{1}{k}\right), \]
where the $O(\cdot)$ term depends on $V_1$ and
$\hat V_2$ but not otherwise on $V_2$. 
 As
a consequence we can see that if $0 \notin
V_1$ and  $V_{2,r} = V_2 \cap
\{|z| \geq r\}$, then
\begin{equation}  \label{cont}
\lim_{r \downarrow 0} \Lambda^*(V_1,V_{2,r})
    = \Lambda^*(V_1,V_2).
\end{equation}
\end{remark}

The definition of $\Lambda^*$ in \eqref{feb27.1} seems
to make the origin a special point.  Theorem
\ref{main2}
shows that this is not the case.

\begin{lemma}  \label{mar3.lemma4}
Suppose $V$ is a nonpolar closed set,
$z \neq 0$ and $0 \notin V$.  Let $\alpha > 0$.
There exist $c,r_0$ (depending on $z,V,\alpha$) such
that if $0 < r < r_0$, 
\[ \left| \Lambda(V,\C \setminus \Outer_r; \Outer_{
\alpha r}(z))
    - \log 2\right| \leq \frac{c}{\log(1/r)}. \]
\end{lemma}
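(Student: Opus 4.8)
Since $\C\sm\Outer_r=\overline{\Disk_r}$, the quantity in question is $\Lambda(V,\overline{\Disk_r};\Outer_{\alpha r}(z))$. By dilation invariance of the loop measure we may assume $|z|=1$, and we may assume $V$ is compact with $z\notin V$ (an unbounded $V$ is truncated to $V\cap\overline{\Disk_M}$ with $M$ large, the discarded loops being controlled as in Lemmas~\ref{diamlem} and~\ref{feb27.newlem}, and the case $z\in V$ is handled by replacing $V$ with the closed set $V\cap\{|w-z|\ge\alpha r\}$, the difference being immaterial and the estimates below uniform). The plan is to apply the $r$-independent M\"obius map $T(\zeta)=z-z^2/\zeta$, which one computes explicitly to satisfy
\[
T(\overline{\Disk_r})=\{|\xi-z|\ge 1/r\}=:\overline{\Outer_{1/r}(z)},\qquad
\Outer_{2\alpha r}\subseteq T(\Outer_{\alpha r}(z))=:\Omega\subseteq\Outer_{\alpha r/2},
\]
the middle set being the exterior of an Apollonius circle of radius $\alpha r(1+O(r^2))$ centred within $O(r^2)$ of the origin, while $U:=T(V)$ is a fixed ($r$-independent) nonpolar compact set with $0\notin U$; set $b:=\max_{\xi\in U}|\xi|$. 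By conformal invariance of $\loopm$, $\Lambda(V,\overline{\Disk_r};\Outer_{\alpha r}(z))=\Lambda(U,\overline{\Outer_{1/r}(z)};\Omega)$. Since every loop meeting $U$ (modulus $\le b$) and $\overline{\Outer_{1/r}(z)}$ (modulus $\ge 1/r-1$) must, by the intermediate value theorem, cross $C_b$ and $C_{1/(2r)}$ once $r$ is small, while a loop crossing $C_{1/r+1}$ meets $\overline{\Outer_{1/r}(z)}$, the problem reduces to estimating loop measures between concentric circles, where Lemma~\ref{feb25.lem1} applies. The crucial arithmetic is that the outer radius $1/(2r)$ and the hole size $\alpha r/2$ are reciprocal up to constants, so $\log\bigl(\tfrac{\text{outer radius}}{\text{hole size}}\bigr)=2\log(1/r)+O(1)$ while $\log(\text{outer radius})=\log(1/r)+O(1)$; their ratio is $2+O(1/\log(1/r))$, and it is the logarithm of this ratio --- $\log 2$ --- that Lemma~\ref{feb25.lem1} delivers.

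For the \emph{upper bound}, replacing ``meets $U$'' by the weaker ``meets $C_b$'' and enlarging $\Omega$ to $\Outer_{\alpha r/2}$ gives $\Lambda(U,\overline{\Outer_{1/r}(z)};\Omega)\le\Lambda(C_b,C_{1/(2r)};\Outer_{\alpha r/2})$, and Lemma~\ref{feb25.lem1} (after rescaling by $1/b$) evaluates the right-hand side as $\log\frac{\log(1/(\alpha r^2))}{\log(1/(2br))}+O(r)=\log 2+O(1/\log(1/r))$.

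For the \emph{lower bound}, a loop meeting $U$ and $\overline{\Outer_{1/r}(z)}$ automatically meets $C_b$, so the cascade relation~\eqref{cascade} gives
\[
\Lambda(U,\overline{\Outer_{1/r}(z)};\Omega)
=\Lambda(C_b,\overline{\Outer_{1/r}(z)};\Omega)
-\Lambda(C_b,\overline{\Outer_{1/r}(z)};\Omega\sm U).
\]
The first term equals $\log 2+O(1/\log(1/r))$ by Lemma~\ref{feb25.lem1} exactly as above, sandwiching $\Omega$ between $\Outer_{2\alpha r}$ and $\Outer_{\alpha r/2}$ and $\overline{\Outer_{1/r}(z)}$ between $C_{1/(2r)}$ and $C_{1/r+1}$. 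The second term is at most $\Lambda(C_b,C_{1/(2r)};\C\sm U)$ by the restriction property and the intermediate value theorem; inverting via $w\mapsto b/w$ and applying Lemma~\ref{feb27.newlem} with $V_1=C_1$ and $V_2$ the (fixed, nonpolar) image of $U$, this is $\le c/\log(1/(2br))=O(1/\log(1/r))$. Together with the upper bound this proves the proposition, the constant $c$ depending on $z,V,\alpha$ only through $b$ and the Lemma~\ref{feb27.newlem} constant for the image of $U$.

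The only genuine work is the geometric bookkeeping of the change of variables $T$ and checking that all the $O(\cdot)$ terms are uniform as $r\downarrow0$; the analytic content is entirely contained in Lemma~\ref{feb25.lem1}, which supplies the $\log 2$, and Lemma~\ref{feb27.newlem}, which shows that $V$ influences only the error term.
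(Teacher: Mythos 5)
Your proposal is correct in outline but takes a genuinely different route from the paper's. The paper works directly in the original coordinates: it decomposes the loop measure over bubbles rooted at the point of the loop closest to the origin as in \eqref{feb27.3.new}, estimates the bubble mass $m(w;D_s,D_s\sm V)$ for $|w|=s\le r$ via Proposition~\ref{mar3.1} (harmonic-measure estimates in the doubly punctured domain $\Outer_s\cap\Outer_{\alpha r}(z)$), and extracts $\log 2$ from the explicit integral $\log r\int_0^r ds/(s\log(1/s)\log(rs))$. You instead conjugate by the M\"obius map $T$ so that the shrinking disk becomes a distant circle, $V$ becomes a fixed compact set $U$, and the domain becomes essentially $\Outer_{\alpha r}$, reducing everything to the concentric-circle computation of Lemma~\ref{feb25.lem1}, with Lemma~\ref{feb27.newlem} absorbing the difference between ``hits $U$'' and ``hits the circumscribed circle $C_b$''; the arithmetic $\log\bigl(2\log(1/r)/\log(1/r)\bigr)=\log 2$ replaces the paper's integral. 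This is a legitimate and arguably more conceptual derivation, since it exhibits the $\log 2$ as the modulus ratio of two nested annuli. Two points need more care. First, $T$ has its pole at $0\in\Outer_{\alpha r}(z)$, and your second inversion $w\mapsto b/w$ has its pole at $0\in\C\sm U$, so the image domains contain $\infty$ and the conformal invariance stated in Sect.~\ref{sec:brownloop} (for maps between planar domains) does not literally apply; you need full M\"obius invariance of the loop measure, which does follow from planar conformal invariance plus the restriction property by exhausting the punctured plane with annuli, but this should be said. Second, when $z\in V$ your replacement set $V\cap\{|w-z|\ge\alpha r\}$ is $r$-dependent and its image under $T$ comes within distance of order $r$ of the origin, so ``$0\notin U$'' fails uniformly and the constant supplied by Lemma~\ref{feb27.newlem} is not uniform as written; the fix is the paper's own device: choose once and for all a nonpolar compact $V_1\subseteq V$ with $z\notin V_1$ and bound the error term by $\Lambda(C_b,C_{1/(2r)};\C\sm T(V_1))$. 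With these repairs the argument is complete.
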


\begin{proof}  We will first assume
$z \notin V$.
 For $s \leq r$, let $D_s = \Outer_s
 \cap \Outer_{\alpha r}(z)$. 
 As in \eqref{feb27.3.new},
\[ \Lambda(V, \C \setminus \Outer_r; \Outer_{\alpha r}(z))
= \frac 1 \pi \int_{|w| \leq r}
   m(w;D_{|w|},D_{|w|} \setminus V) \, dA(w) . \]
 By Proposition~\ref{mar3.1}, if $|w| =s \leq r$, 
\[ \frac 1 \pi \, m(w;D_{s},D_{s}
\setminus V) = \frac{1}{2\pi s^2\,\log(1/s)
 } \,
 \frac{\log r}{\log(rs)} \, \left[1 + O\left(
  \frac{1}{\log(1/r)}\right) \right],\]
and therefore,
\[ \Lambda(V, \C \setminus \Outer_r; \Outer_{\alpha r}(z))
= \log r \int_0^r \frac{ds}{s \, \log(1/s) \, \log(rs)}
 \,  \left[1 + O\left(
  \frac{1}{\log(1/r)}\right) \right].\]
A straightforward computation gives
\[  \log r \int_0^r \frac{ds}{s \, \log(1/s) \, \log(rs)}
=\log 2.\]
This finishes the proof for $z \notin V$.

If $z \in V$, let $V_1 \subset V$ be
a closed nonpolar set with $z \notin V_1$.  Then
\eqref{cascade} implies
\begin{multline*} 
\Lambda(V,\C \setminus \Outer_r; \Outer_{
\alpha r}(z)) \\
= \Lambda(V_1, \C \setminus \Outer_r;
   \Outer_{\alpha r}(z)) + \Lambda(V \setminus
  V_1, \C \setminus \Outer_r;
   \Outer_{\alpha r}(z) \setminus V_1) .
\end{multline*}
Since the previous paragraph applies to $V_1$ it suffices
to show that
\[ \Lambda(V \setminus
  V_1, \C \setminus \Outer_r;
   \Outer_{\alpha r}(z) \setminus V_1) = O\left(
  \frac 1 {\log (1/r)}\right). \]
We can write
 \[ \Lambda(V\setminus V_1
, \C \setminus \Outer_r; \Outer_{\alpha r}(z)\setminus V_1)
= \frac 1 \pi \int_{|w| \leq r}
   m(w;D_s\setminus V_1,D_s \setminus V) \, dA(w) . \]
By using \eqref{mar2.20} and~\eqref{feb28.2} we can see that
\[   m(w;D_s\setminus V_1,D_s \setminus V) 
  \leq  \frac{c}{s^2\,\log^2(1/s)} , \]
and hence
\[ 
\Lambda(V\setminus V_1
, \C \setminus \Outer_r; \Outer_{\alpha r}(z)\setminus V_1)
  \leq   c \int_0^r \frac{ds}{s \, \log^2(1/s)} 
  \leq   \frac{c}{\log(1/r)}.\qedhere
\]
\end{proof}

The following is the equivalent lemma for $z = \infty$.  It
can be proved similarly or by conformal transformation.

\begin{lemma}  Suppose $V$ is a nonpolar closed set,
 and $0 \notin V$.  Let $\alpha > 0$.
There exists $c,r_0$ (depending on $V,\alpha$) such
that if $0 < r < r_0$, 
\[ \left| \Lambda(V,\C \setminus \Outer_r; \Disk_{\alpha / r})
    - \log 2\right| \leq \frac{c}{\log(1/r)}. \]
\end{lemma}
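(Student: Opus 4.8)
The plan is to deduce the lemma from Lemma~\ref{mar3.lemma4} by a M\"obius transformation --- the ``by conformal transformation'' route mentioned above. Choose a M\"obius map with $\phi(0)=0$ and $\phi(\infty)$ finite and nonzero, for concreteness $\phi(w)=w/(w+1)$, so that $z_0:=\phi(\infty)=1$. Since the Brownian loop measure is M\"obius invariant,
\[
\Lambda(V,\C\sm\Outer_r;\Disk_{\alpha/r})
=\Lambda\bigl(\phi(V),\phi(\C\sm\Outer_r);\phi(\Disk_{\alpha/r})\bigr).
\]
I would replace $\phi(V)$ by $V':=\phi(V)\cup\{z_0\}$; this adds at most one polar point, so does not change the loop measure, and $V'$ is the image under $\phi$ of the Riemann-sphere closure of $V$, hence a closed, nonpolar subset of $\C$ not containing $0$. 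Because $\phi$ maps circles to circles, $\phi(\C\sm\Outer_r)=\phi(\overline\Disk_r)$ is a genuine round closed disk and $\phi(\Disk_{\alpha/r})$ is the exterior of a genuine round circle.

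Next I would locate these round disks by Taylor-expanding $\phi$ at $0$ and at $\infty$. One finds, as $r\downarrow0$,
\[
\overline\Disk_{r^-}\subseteq\phi(\overline\Disk_r)\subseteq\overline\Disk_{r^+},
\qquad
\Outer_{\rho^+}(z_0)\subseteq\phi(\Disk_{\alpha/r})\subseteq\Outer_{\rho^-}(z_0),
\]
with $r^\pm=r\,(1+O(r))$ and $\rho^\pm=(r/\alpha)\,(1+O(r))$; in particular all four radii are $\asymp r$ and $\rho^\mp/r^\pm=\alpha\inv(1+O(r))$. Using the monotonicity of $\Lambda$ in each of its set arguments and in the domain, I would then sandwich
\[
\Lambda\bigl(V',\overline\Disk_{r^-};\Outer_{\rho^+}(z_0)\bigr)
\le\Lambda\bigl(V',\phi(\overline\Disk_r);\phi(\Disk_{\alpha/r})\bigr)
\le\Lambda\bigl(V',\overline\Disk_{r^+};\Outer_{\rho^-}(z_0)\bigr),
\]
the middle quantity being $\Lambda(V,\C\sm\Outer_r;\Disk_{\alpha/r})$. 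Each outer quantity is of exactly the shape handled by Lemma~\ref{mar3.lemma4}: with interior point $z_0$, set $V'$, radius $r^\pm$ and factor $\rho^\mp/r^\pm=\alpha\inv(1+O(r))$, it equals $\log2+O(1/\log(1/r^\pm))=\log2+O(1/\log(1/r))$. Squeezing gives $\Lambda(V,\C\sm\Outer_r;\Disk_{\alpha/r})=\log2+O(1/\log(1/r))$, as claimed.

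The only genuinely delicate point is that Lemma~\ref{mar3.lemma4} is being invoked with a ``factor'' $\alpha\inv(1+O(r))$ that depends (mildly) on $r$, so one must check that its implied constant can be taken uniform as the factor ranges over a fixed compact subinterval of $(0,\infty)$; this is visible from that lemma's proof, where the factor $\alpha$ enters only through lower-order terms. Everything else --- the Taylor expansions, and the fact that the perturbations of $\phi(\overline\Disk_r)$ and $\phi(\Disk_{\alpha/r})$ away from round disks centred at $0$ and $z_0$ have relative size $O(r)$ and so are absorbed by the sandwich --- is routine. Alternatively, the lemma may be proved directly, in parallel with Lemma~\ref{mar3.lemma4}: use~\eqref{feb27.3.new} to decompose the loops according to their point nearest the origin, insert the bubble estimate of Proposition~\ref{mar3.1.infty} (with its ``$r$'' equal to $r/\alpha$) in place of Proposition~\ref{mar3.1}, and carry out the same elementary integral $\log r\int_0^r ds/(s\log(1/s)\log(rs))=\log2$; in this route the reduction to compact $V$, needed because Proposition~\ref{mar3.1.infty} is stated for compact sets, is done exactly as in the $z\in V$ case of Lemma~\ref{mar3.lemma4}.
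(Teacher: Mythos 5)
Your argument is correct and follows precisely the route the paper itself indicates (its entire ``proof'' is the remark that the lemma ``can be proved similarly or by conformal transformation''): you execute the M\"obius-transformation reduction to Lemma~\ref{mar3.lemma4} in detail, correctly handling the two genuine subtleties, namely closing up $\phi(V)$ at $\phi(\infty)$ and the uniformity of that lemma's constant as its factor ranges over a compact subinterval of $(0,\infty)$. Your alternative sketch at the end is the paper's other suggested route (``similarly,'' via Proposition~\ref{mar3.1.infty}), so nothing further is needed.
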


We extend this to $k$ closed sets.  

\begin{lemma} \label{klemma}
 Suppose $V_1,\ldots,V_k$ are closed
nonpolar subsets of $\C$ that do not contain~$0$.
 Let $z \ne 0$ and $\alpha > 0$.
There exist $c,r_0$ (depending on $z,\alpha,V_1,
\ldots,V_k$) such
that if $0 < r < r_0$, 
\begin{align*}
 \left| \Lambda(V_1,\ldots,V_k
,\C \setminus \Outer_r; \Outer_{
\alpha r}(z))
    - \log 2\right| &\leq \frac{c}{\log(1/r)}, \\
 \left| \Lambda(V_1,\ldots,V_k
,\C \setminus \Outer_r; \Disk_{\alpha/r})
    - \log 2\right| &\leq \frac{c}{\log(1/r)}. 
\end{align*}
\end{lemma}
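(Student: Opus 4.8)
The plan is to reduce to the case $k=1$, which is Lemma~\ref{mar3.lemma4} (for $\Outer_{\alpha r}(z)$) and the analogous lemma with $z=\infty$ stated just before the present one (for $\Disk_{\alpha/r}$), by an induction on $k$ that peels off one set at a time using the cascade identity~\eqref{cascade} and controls the resulting error with Lemma~\ref{feb27.newlem}. Since the set arguments of $\Lambda$ enter symmetrically, I single out $V_1$; recall $0\notin V_1$ and that $V_1$ is nonpolar. Two observations will be used repeatedly. First, for $r<\dist(0,V_1)/2$ the sets $V_1$ and $\C\sm\Outer_r=\overline\Disk_r$ are disjoint, and the trace of any loop meeting both is connected, hence also meets the circle $C_r$; therefore $\Lambda(V_1,\overline\Disk_r;\,\cdot\,)=\Lambda(V_1,C_r;\,\cdot\,)$, so Lemma~\ref{feb27.newlem} applies. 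Second, by the restriction property of the loop measure, $\Lambda$ can only increase if we enlarge the domain or require the loop to meet fewer sets.

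Now I induct on $k$, the base case being the two one-set lemmas quoted above (using $\C\sm\Outer_r=\overline\Disk_r$). For the inductive step, write $D$ for either $\Outer_{\alpha r}(z)$ or $\Disk_{\alpha/r}$ and apply~\eqref{cascade} to split off $V_k$:
\[
\Lambda(V_1,\ldots,V_{k-1},\overline\Disk_r;D)
=\Lambda(V_1,\ldots,V_k,\overline\Disk_r;D)
+\Lambda(V_1,\ldots,V_{k-1},\overline\Disk_r;D\sm V_k).
\]
By the monotonicity observations, the last term is at most $\Lambda(V_1,\overline\Disk_r;\C\sm V_k)=\Lambda(V_1,C_r;\C\sm V_k)$, which by Lemma~\ref{feb27.newlem} is $\le c_{V_1,V_k}/\log(1/r)$ once $r<\dist(0,V_1)/2$ (here we use that $V_k$ is nonpolar and $0\notin V_1$). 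Subtracting this from the inductive bound $|\Lambda(V_1,\ldots,V_{k-1},\overline\Disk_r;D)-\log2|\le c/\log(1/r)$ yields the claim for $k$ sets, with a constant and threshold $r_0$ depending only on $z,\alpha,V_1,\ldots,V_k$ (taking $r_0$ no larger than $\dist(0,V_1)/2$ and than the thresholds of the lemmas used).

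There is no serious difficulty here; the argument is a bookkeeping of already-proved estimates. The only two points requiring a little care are the identification $\Lambda(V_1,\overline\Disk_r;\,\cdot\,)=\Lambda(V_1,C_r;\,\cdot\,)$, which is what lets us invoke Lemma~\ref{feb27.newlem} (stated in terms of $C_r$) despite $\C\sm\Outer_r$ being a closed disk, and checking that each cascade error term $\Lambda(V_1,\ldots,V_{k-1},\overline\Disk_r;D\sm V_k)$ is dominated by the same quantity on a domain at least as large, so that Lemma~\ref{feb27.newlem}'s $O(1/\log(1/r))$ bound genuinely controls it. Attempting instead to mimic the direct computation in the proof of Lemma~\ref{mar3.lemma4} would be harder, because the loop measure of loops hitting \emph{all} of $V_1,\ldots,V_k$ is not cleanly a difference of Brownian bubble measures, which is why the inductive/cascade route is preferable.
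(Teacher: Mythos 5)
Your proof is correct, but it takes a genuinely different route from the paper's. The paper disposes of the $k$-set case by inclusion--exclusion: for $k=2$,
\[
\Lambda(V_1\cup V_2,\C\sm\Outer_r;D)+\Lambda(V_1,V_2,\C\sm\Outer_r;D)
=\Lambda(V_1,\C\sm\Outer_r;D)+\Lambda(V_2,\C\sm\Outer_r;D),
\]
and since $V_1\cup V_2$ is again a closed nonpolar set avoiding the origin, Lemma~\ref{mar3.lemma4} (resp.\ its $z=\infty$ analogue) applies to $V_1\cup V_2$, $V_1$ and $V_2$ alike, so the three known terms are each $\log 2+O(1/\log(1/r))$; general $k$ is a signed sum over the $2^k-1$ nonempty subsets, handled the same way. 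Your induction instead peels off one set at a time with the cascade identity~\eqref{cascade} and bounds each discarded term $\Lambda(V_1,\ldots,V_{k-1},\overline\Disk_r;D\sm V_k)$ by $\Lambda(V_1,C_r;\C\sm V_k)=O(1/\log(1/r))$ via Lemma~\ref{feb27.newlem}. Both reductions are sound, and the two points you flag as needing care --- the identification $\Lambda(V_1,\overline\Disk_r;\cdot)=\Lambda(V_1,C_r;\cdot)$ by connectedness of the loop trace, and monotonicity in both the domain and the family of required sets --- do hold. The inclusion--exclusion route is shorter and needs only the single-set asymptotic (applied to unions); your route needs the single-set asymptotic for just one of the sets plus the quantitative upper bound of Lemma~\ref{feb27.newlem}, mirrors the two-set-versus-one-set comparison already made in the proof of Theorem~\ref{main1}, and has the minor advantage that each correction term is manifestly nonnegative, so the error only accumulates in one direction.
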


\begin{proof}  If $k=2$, inclusion-exclusion
implies
\begin{multline*}
 \Lambda(V_1 \cup V_2
,\C \setminus \Outer_r; \Outer_{
\alpha r}(z)) + \Lambda(V_1,V_2
,\C \setminus \Outer_r; \Outer_{
\alpha r}(z))\\
  = \Lambda(V_1 
,\C \setminus \Outer_r; \Outer_{
\alpha r}(z)) + \Lambda(V_2
,\C \setminus \Outer_r; \Outer_{
\alpha r}(z))  .
\end{multline*}
Since Lemma \ref{mar3.lemma4} applies to $V_1 \cup V_2,
V_1,V_2$, we get the result.  The cases $k > 2$
and  $z=\infty$
are done similarly.
\maybeqed\end{proof}

\begin{theorem}  \label{main2}
Suppose $V_1,V_2$ are disjoint,
nonpolar
closed subsets of the Riemann sphere and $z \in
\C$.
 Then 
\[ \Lambda^*(V_1,V_2) = \lim_{r \downarrow 0}
    \left[\Lambda(V_1,V_2;\Outer_r(z)) - \log \log(1/r)
\right]  .
\]
Moreover,
\[ \Lambda^*(V_1,V_2) = \lim_{R \rightarrow \infty}
    \left[\Lambda(V_1,V_2;\Disk_R) - \log \log R
\right]  .\]

\end{theorem}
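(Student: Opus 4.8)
The plan is to deduce both identities from Theorem~\ref{main1}: I would show that, as $r\downarrow0$, replacing the domain $\Outer_r$ by $\Outer_r(z)$ (respectively by $\Disk_{1/r}$) changes $\Lambda(V_1,V_2;\Outer_r)$ only by $o(1)$. The two ingredients are the cascade rule~\eqref{cascade} and Lemma~\ref{klemma}. First, though, I would reduce to the case where none of the relevant special points --- $0$ and $z$ for the first identity, $0$ and $\infty$ for the second --- lies in $V_1\cup V_2$: excising the $\epsilon$-balls around those points from both $V_1$ and $V_2$ leaves them nonpolar for small $\epsilon$ (else they would be polar), and changes $\Lambda(V_1,V_2;D)$, uniformly in $D$, by at most $\Lambda(V_1\cap\overline{B_\epsilon},V_2;\C)+\Lambda(V_1,V_2\cap\overline{B_\epsilon};\C)$ by~\eqref{cascade}, where $B_\epsilon$ denotes the excised region; these are finite by Corollary~\ref{easycor} and tend to $0$ as $\epsilon\downarrow0$, since in each case the relevant limiting family of loops (loops through a fixed point of the sphere) has loop measure zero. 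So it suffices to prove the identities for the excised sets and let $\epsilon\downarrow0$ at the end.

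Assume now $0,z\notin V_1\cup V_2$. I would split the loops in $\Outer_r$ that meet $V_1$ and $V_2$ according to whether they also meet $\C\sm\Outer_r(z)$, and the loops in $\Outer_r(z)$ that meet $V_1$ and $V_2$ according to whether they also meet $\C\sm\Outer_r$. In both splittings the remainder is the loop measure of loops confined to $\Outer_r\cap\Outer_r(z)$ that meet $V_1$ and $V_2$, so on subtracting,
\[
\Lambda(V_1,V_2;\Outer_r)-\Lambda(V_1,V_2;\Outer_r(z))=\Lambda\bigl(V_1,V_2,\C\sm\Outer_r(z);\Outer_r\bigr)-\Lambda\bigl(V_1,V_2,\C\sm\Outer_r;\Outer_r(z)\bigr).
\]
By Lemma~\ref{klemma} (with $k=2$ and $\alpha=1$) the second term on the right is $\log2+O(1/\log(1/r))$; applying the translation $w\mapsto w-z$ and then Lemma~\ref{klemma} again --- legitimate since $z\notin V_1\cup V_2$ --- the first term is $\log2+O(1/\log(1/r))$ too. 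Hence the left side is $O(1/\log(1/r))\to0$, and with Theorem~\ref{main1} this gives $\Lambda(V_1,V_2;\Outer_r(z))-\log\log(1/r)\to\Lambda^*(V_1,V_2)$, the first identity.

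The second identity goes the same way. Put $R=1/r$, so that $\log\log R=\log\log(1/r)$, and assume $0,\infty\notin V_1\cup V_2$. Splitting now with respect to $\C\sm\Disk_{1/r}$ for the loops in $\Outer_r$ and with respect to $\C\sm\Outer_r$ for the loops in $\Disk_{1/r}$ --- the common remainder being the loops in $A_{r,1/r}$ --- I would get
\[
\Lambda(V_1,V_2;\Outer_r)-\Lambda(V_1,V_2;\Disk_{1/r})=\Lambda\bigl(V_1,V_2,\C\sm\Disk_{1/r};\Outer_r\bigr)-\Lambda\bigl(V_1,V_2,\C\sm\Outer_r;\Disk_{1/r}\bigr).
\]
The second term is $\log2+O(1/\log(1/r))$ by the $\Disk_{\alpha/r}$ form of Lemma~\ref{klemma}; the first, after the inversion $w\mapsto1/w$ (which preserves the loop measure, carries $\Outer_r$ to $\Disk_{1/r}$ and $\C\sm\Disk_{1/r}$ to $\C\sm\Outer_r$, and carries each $V_i$ to $1/V_i$, which omits $0$ because $\infty\notin V_i$), is of that same form, hence also $\log2+O(1/\log(1/r))$. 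As before this yields $\Lambda(V_1,V_2;\Disk_{1/r})-\log\log(1/r)\to\Lambda^*(V_1,V_2)$, which is the second identity since $R=1/r$ runs over all large reals.

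I do not expect a genuine obstacle; the argument assembles lemmas already proved, and the effort is bookkeeping. The points that will need care are: confirming that every loop measure written down is finite (Corollary~\ref{easycor}) so that the subtractions are meaningful; checking that the two remainder terms in each cascade really are the same measure and cancel; and making the reduction to $0,z,\infty\notin V_1\cup V_2$ uniform enough in $r$ to be performed before the limit. One must also keep track that Lemma~\ref{klemma} is invoked with $0$ --- or, after a translation (resp.\ an inversion), with $z$ (resp.\ $\infty$) --- playing the role of the center of the hit disk $\C\sm\Outer_r$, which is precisely the point its hypothesis requires the $V_i$ to avoid.
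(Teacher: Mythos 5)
Your core argument is exactly the paper's: the same cascade identity
\[
\Lambda(V_1,V_2;\Outer_r(z))-\Lambda(V_1,V_2;\Outer_r)
=\Lambda(V_1,V_2,\C\sm\Outer_r;\Outer_r(z))-\Lambda(V_1,V_2,\C\sm\Outer_r(z);\Outer_r),
\]
each term evaluated as $\log2+O(1/\log(1/r))$ by Lemma~\ref{klemma} (after a translation, resp.\ an inversion), combined with Theorem~\ref{main1}. That part is correct, and you are right that the hypotheses of Lemma~\ref{klemma} force a preliminary reduction to the case where $0$, $z$ (resp.\ $\infty$) avoid $V_1\cup V_2$ --- a point the paper passes over quickly, handling it only implicitly via the continuity statement~\eqref{cont}.

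The genuine gap is in how you carry out that reduction. You bound the effect of excising $\overline{B_\epsilon}$, uniformly over all domains $D$ in play, by $\Lambda(V_1\cap\overline{B_\epsilon},V_2;\C)+\Lambda(V_1,V_2\cap\overline{B_\epsilon};\C)$ and call these finite ``by Corollary~\ref{easycor}.'' But Corollary~\ref{easycor} requires the ambient domain to be nonpolar, and $\C$ is not; worse, the quantity is genuinely infinite whenever $V_1\cap\overline{B_\epsilon}$ is nonpolar: Theorem~\ref{main1} (or the second display of the very theorem you are proving) shows that $\Lambda(W_1,W_2;\Disk_R)\sim\log\log R\to\infty$ for any two disjoint nonpolar closed sets, so $\Lambda(W_1,W_2;\C)=\infty$. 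Since your bound must hold uniformly over the exhausting families $\Outer_r$, $\Outer_r(z)$, $\Disk_R$, you cannot retreat to a fixed nonpolar domain, and the monotone-convergence step (``tends to $0$ because loops through a point have measure zero'') never gets off the ground --- that argument needs a finite value to start from. The reduction should instead be done after the cancellation, at the level of the normalized quantities: restrict $V_2$ to $V_{2,\rho}=V_2\cap\Outer_\rho$ and use~\eqref{cont} (whose proof rests on Lemma~\ref{feb27.newlem}, where the key is that a nonpolar set is kept in the \emph{complement} of the domain, making the loop measure of loops near the excised point $O(1/\log(1/\rho))$), and argue the analogous continuity for the $\Outer_r(z)$- and $\Disk_R$-based limits. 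With that repair the rest of your write-up goes through.
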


\begin{proof}   We will assume $0 \notin V_1$.
Using \eqref{feb27.1}, we see that
it suffices to prove that
\[ 
\lim_{r \downarrow 0}
    \left[\Lambda(V_1,V_2;\Outer_r(z)) - \Lambda(V_1,
V_2;\Outer_r)
\right]  = 0 .\]
Note that
\begin{multline*}
 \Lambda(V_1,V_2;\Outer_r(z)) - \Lambda(V_1,
V_2;\Outer_r) \\
= \Lambda(V_1,V_2,\C \setminus \Outer_r; \Outer_r(z)) -
    \Lambda(V_1,V_2,\C \setminus \Outer_r(z); \Outer_r).
\end{multline*}
Lemma \ref{klemma} implies
\begin{equation}  \label{feb27.2}
   \Lambda(V_1,V_2,\C \setminus \Outer_r; \Outer_r(z))
   = \log 2 + O\left(\frac 1 {\log(1/r)}\right), 
\end{equation}
where the constants in the error term depend on $z,V_1,V_2$.
Similarly, using translation invariance of the loop
measure, we can
see that  
\[ \Lambda(V_1,V_2,\C \setminus \Outer_r(z); \Outer_r)
   = \log 2 + O\left(\frac 1 {\log(1/r)}\right).
\]
The case $z=\infty$ is done similarly.
\maybeqed\end{proof}

If $V_1,V_2,\ldots,V_k$ are pairwise disjoint
nonpolar closed subsets
of the Riemann sphere,
we define
similarly
\[  \Lambda^*(V_1,\ldots,V_k) =
  \lim_{r \downarrow 0}
    \left[\Lambda(V_1,V_2,\ldots,
V_k;\Outer_r) - \log \log(1/r)
\right] .\]
One can prove the existence of the limit in the same
way or we can use the relation
\begin{equation}
\label{loopdecomp}
  \Lambda^*(V_1,\ldots,V_k)
  = \Lambda^*(V_1,\ldots,V_{k+1})
 + \Lambda(V_1,\ldots,V_k; \C \setminus V_{k+1}).
\end{equation}

\subsection{Estimate on the loops that cross an annulus}
\label{subs:lmstar-est}

We conclude the discussion of the normalized loop measure
by supplying the proof of
Proposition~\ref{good-lmstar-estimate}, 
which was stated on page~\pageref{good-lmstar-estimate}.

\begin{proof}
Since $\p D \cap \Disk = \emptyset$,  if  $R > 1$,
\[   \Lambda(K,\p D; \Disk_R) = \Lambda(K,C_1;\Disk_R)
  - \Lambda (K, C_1; D \cap \Disk_R). \]
  Taking limits as $R \rightarrow \infty$,
\[   \Lambda^*(K,\p D) = \lim_{R \rightarrow \infty}
    \left[\Lambda(K,C_1;\Disk_R) - \log \log R\right]
      - \Lambda(K,C_1;D ). \]
 Hence it suffices to show that
 \begin{equation}  \label{mar13.1}
  \lim_{R \rightarrow \infty}
    \left[\Lambda(K,C_1;\Disk_R) - \log \log R\right]
     = -\log \log (1/t) + O(t)
     \end{equation}
     and
     \begin{equation} \label{mar13.2ish}
     \Lambda(K,C_1;D) 
     = \log\left(1+\frac{\log\psi'(0)}{\log t}\right)
  + O(t).
  \end{equation}

Suppose $t < 1/8$ and $K \in \hulls_t $. Then,
by~\eqref{furthestroot}, if $R>1$,
\[ 
\Lambda(K,C_1;\D_R) 
= 
\frac1\pi
\int_0^{2\pi} \int_{1}^R m(re^{i\theta}; \D_r,\D_r\sm K)
\, r \, dr\, d\theta .\]
If $z \in K$, $r \geq 1$, $\theta \in [0,2\pi]$, then
\[h_{\Disk_r}(z,re^{i\theta}) = \frac{1}{2\pi r} \, \left[1 + O(t/r)
 \right], \]
 and hence
 \[  \frac{1}{\pi} \,  m(re^{i\theta}; \D_r,\D_r\sm K)
   = \exc_{\Disk_r\setminus K}(re^{i\theta},K) \,  \frac{1}{2 \pi r} 
   \, \left[1 + O(t/r)
 \right].\]
 Lemma \ref{exc-est} and conformal covariance gives
 \[    \exc_{\Disk_r\sm K}(re^{i\theta},K) =
 r^{-1} \, \exc_{\Disk\sm (r^{-1} K)}(e^{i\theta},r^{-1} K)=
    \frac{1}{r\log(r/t)} + O(t/r^2).\]
 Therefore,
 \[ \frac r\pi
\int_0^{2\pi}  m(re^{i\theta}; \D_r,\D_r\sm K)
\,  d\theta  =   \frac{1}{r\log(r/t)}   + O(t/r^2) \]
and
\[
 \Lambda(K,C_1;\D_R) 
= \int_1^R \left[ \frac{1}{r\log(r/t)}   + O(t/r^2)\right] \, dr
=\log \biggl[\frac{\log(R/t)}{\log(1/t)}\biggr] + O(t).
\]
This gives \eqref{mar13.1}.

 Let $D_r$ denote the connected component of $D \cap \Disk_r$
 containing the origin.  Then using \eqref{furthestroot} again, we
 get
 \[ 
\Lambda(K,C_1;D) 
= 
\frac1\pi
\int_0^{2\pi} \int_{1}^\infty m(re^{i\theta}; D_r,D_r\sm K)
\, r \, dr\, d\theta .\]
Our first claim is
\[   \Lambda(K,C_1;D) = \Lambda(C_t,C_1;D) \, [1 + O(t)].\]
In fact, for every $r \geq 1$,
\[  m(re^{i\theta}; D_r,D_r\sm K)=  m(re^{i\theta}; D_r,D_r\sm 
\overline \Disk_t)\,[1 + O(t)].\]
Indeed, this estimate follows from the two estimates
\[    h_{D_r}(z,re^{i\theta}) = h_{D_r}(0,re^{i\theta})
 \, [ 1 +O(t)], \;\;\;\; |z| \leq 4t,\]
 \[  \exc_{D_r\sm K}(re^{i\theta},K) =
   \exc_{D_r \sm \overline \D_t}(re^{i\theta},\overline{\D}_t)
    \, [ 1 +O(t)].\]
    The first follows from the fact that
  $h_{D_r}(\cdot,re^{i\theta})$ is a positive harmonic function
  on $\Disk$.  
  The second may be found in Lemma~\ref{L2.8Str}.

  To compute $\Lambda(C_t,C_1;D)$ we use 
  \eqref{march1.1} to write
\[ \Lambda(C_t,C_1;D)=   \frac 1 \pi \int_0^{2\pi}
   \int_0^t m_{\Outer_r}(re^{i\theta};D,\Disk) \, r \, dr\,
  d \theta .
\] 
Also, for $0<r<t$,
\begin{align*} 
\frac 1 \pi \, m_{\Outer_r}(re^{i\theta};D,\Disk) & =
   \int_{C_1}
    h_{\p A_{r,1}} (re^{i\theta},w)
    \, h_{D\cap\Outer_r}(w,re^{i\theta}) 
     \, |dw|\\
     & = \frac 1r\left[\frac{\delta_r}{2\pi  } +O(r)\right] \int_0^{2\pi}
       h_{D\cap\Outer_r}(e^{iy},re^{i\theta}) \, dy 
       \end{align*}
by~\eqref{mar2.5}.
Let $q_D(w,r)$ denote the probability that a Brownian
motion starting at $w$ hits $C_r$ before leaving $D$.
Then~\eqref{feb28.2.alt} implies that for $|w| = 1$,
 \[   h_{D\cap\Outer_r}(w,re^{i\theta})  =
            \frac{ q_D(w,r)}{2\pi r} \, [1 + O(r/\delta_r)].\]
 Therefore,
\[ r\int_0^{2\pi}
       h_{D\cap\Outer_r}(e^{iy},re^{i\theta}) \, dy  =
  \E[ q_D(B_\tau,r) ] \, [1 + O(r/\delta_r)], \]
  \[ \frac r \pi\, m_{\Outer_r}(re^{i\theta};D,\Disk)  = \frac{\delta_r}
  {2\pi r}\,  \E[ q_D(B_\tau,r) ] \, [1 + O(r/\delta_r)], \]  
   where $B_t$ is a Brownian motion 
   started at 0
   and $\tau$ is the first
   time $t$ with $|B_t| = 1$. 
   Let  $\psi:D \rightarrow \D$ be the unique conformal
   transformation with $\psi(0) = 0$, $\psi'(0) > 0$. 
   We have
\[    \Disk_{\psi'(0) \, r - O(r^2)}
  \subset \psi(\Disk_r)  \subset \Disk_{\psi'(0) \, r+ O(r^2)}.\]
  If $D \in \domain$, then $|\psi(w)| \geq 1/16$ for $|w| = 1$, and
  hence
  \[     q_D(w,r) = \frac{\log |\psi(w)|}{\log[\psi'(0)\,r + O(r^2)]}
   = - \delta_{\psi'(0)\,r} \, [\log |\psi(w)|] \, \left[1 + O(r\delta_r)\right].\]
   Hence
\[   \E[ q_D(B_\tau,r) ]  = - \delta_{\psi'(0)\,r} \, \E[\log |\psi(B_\tau)|]
 \, \left[1 + O(r\delta_r)\right].\]
By considering the harmonic function $H(z) = \log|\psi(z)/z|$, we
see that
\[  \E[\log |\psi(B_\tau)|] = \log \psi'(0).\]
    Hence,
  \[\frac r \pi\int_0^{2\pi}  m_{\Outer_r}(re^{i\theta},D,\Disk)\, d\theta
   = -\frac{\delta_r\,\delta_{\psi'(0)\,r}}{r}  \, [\log \psi'(0)]\, [1 + O(r/\delta_r)].\]
Also,
\[  \int_0^t \frac{\delta_r\,\delta_{\psi'(0)\,r}}{r} \, dr
= -\frac1{\log\psi'(0)}\log\left(1+\frac{\log\psi'(0)}{\log t}\right) , \]
\[   0\le\int_0^t  \delta_{\psi'(0)\,r} \,dr \le t\,\delta_{\psi'(0)\,t}. \]
This gives~\eqref{mar13.2ish}.
\maybeqed\end{proof}

\small

\end{document}